%% file: main.tex
\documentclass[sn-mathphys-num,pdflatex]{sn-jnl}%

\input{preamble}

\newcommand{\cost}{F}
\newcommand{\NMV}{R}

\newcommand{\Ninfty}{\mathbb{N}_{\infty}^{\#}}
\newcommand{\constraint}{C}

\newcommand{\smoothingDomain}{D}
\newcommand{\ratio}{\theta}

\begin{document}

\title{\Large Asymptotic Analysis of Gradient Mapping-type Stationarity Measure for the Sum of Nonconvex Nonsmooth Functions and Applications to Proximal Gradient-type Algorithms}

\date{\today}

\author*[1]{\fnm{Keita} \sur{Kume}}\email{kume@sp.ict.e.titech.ac.jp}

\author[1]{\fnm{Isao} \sur{Yamada}}\email{isao@sp.ict.e.titech.ac.jp}

\affil[1]{\centering\orgdiv{Dept. of Information and Communications Engineering},\\ \orgname{Institute of Science Tokyo}, \orgaddress{\street{2-12-1}, \city{Ookayama}, \postcode{152-8550}, \state{Tokyo}, \country{Japan}}}

\abstract{
  We propose a gradient mapping-type stationarity measure for the sum of two possibly nonsmooth nonconvex functions.
  Under suitable regularity assumptions, we show that Fr\'echet or proximal stationarity can be characterized through asymptotic vanishing of the proposed measure along some convergent sequence.
  We also establish analogous asymptotic results for a smoothing-based variant of the measure, which enables us to combine the proposed framework with smoothing techniques developed for nonsmooth optimization.
  As an application of our analysis of the stationarity measure, we provide an affirmative answer to an open question raised by [Olikier-Waldspurger, SIAM J.
      Optim., 2025] on whether
  every cluster point of a sequence generated by a proximal gradient method is a proximal stationary point under local Lipschitz smoothness of one component of the cost function.
  As a second application, we propose a proximal variable smoothing algorithm with a nonmonotone linesearch for minimizing the sum of two nonsmooth nonconvex functions under lower regularity of one component and prox-regularity of the other.
  For the proposed algorithm, we show that every cluster point of a subsequence such that the stationarity measure vanishes is a Fr\'echet stationary point.
}
\keywords{
  stationarity measure, nonsmooth nonconvex optimization, proximal gradient method, variable smoothing, prox-regularity, lower regularity
}
\pacs[MSC Classification]{
  49J52, 49J53,
  65K10, 90C26, 90C30
}

\maketitle

\mathtoolsset{showonlyrefs=true}

\section{Introduction} \label{sec:introduction}
Stationarity measures play a central role in the design and analysis of nonconvex optimization algorithms, where a stationarity measure quantifies the achievement level of a certain necessary condition, called a stationarity condition, for the local optimality of a cost function.
In smooth nonconvex optimization where the cost function
$J:\mathcal{X}\to \exR$
is continuously differentiable over the Euclidean space
$\mathcal{X}$,
the quantity
$\norm{\nabla J(\widebar{\bm{x}})}$
with the gradient
$\nabla J:\mathcal{X}\to\mathcal{X}$
is the most standard stationarity measure at
$\widebar{\bm{x}} \in \mathcal{X}$
because
the condition
$\norm{\nabla J(\widebar{\bm{x}})} = 0$
is a well-known necessary condition for the local optimality of
$J$
at
$\widebar{\bm{x}}$~\cite{Nocedal-Wright06,Nesterov14}.
Indeed, many iterative algorithms are designed so that this quantity
$\norm{\nabla J(\bm{x}_{n})}$
decreases along a generated sequence
$(\bm{x}_{n})_{n=1}^{\infty}\subset\mathcal{X}$~\cite{Zhang-Hager04,Nesterov14,Nocedal-Wright06},
in particular are designed to achieve (numerically or theoretically)
a faster convergence of
$\norm{\nabla J(\bm{x}_{n})}$
to zero~\cite{Nesterov14,Ghadimi-Lan16,Marumo-Takeda24}.
In this sense, the choice of an appropriate stationarity measure is not merely a technical issue in convergence analysis; rather, it provides a basic guideline for algorithm design, stopping criteria, and worst-case complexity analysis.

For nonsmooth nonconvex optimization, a natural extension of the quantity
$\norm{\nabla J(\widebar{\bm{x}})}$
may seem to be the distance
$\dist(\bm{0},\partial_{*} J(\widebar{\bm{x}}))$
between
$\bm{0}$
and
$\partial_{*} J(\widebar{\bm{x}})\subset \mathcal{X}$,
where
$\partial_{*}J:\mathcal{X}\rightrightarrows\mathcal{X}$
denotes a suitable subdifferential of
$J$
as a generalization of
$\nabla J$.
Here we use the generic notation $\partial_{*}J$, because several notions of subdifferentials have been proposed in the nonconvex setting~\cite{Rockafellar-Wets98},
e.g., the {\em proximal},
  {\em Fr\'echet}
and
  {\em limiting subdifferentials}
denoted respectively by
$\Psubdiff J$,
$\Fsubdiff J$,
and
$\Lsubdiff J$
(see Definition~\ref{definition:subdifferential}).
However, the quantity
$\dist(\bm{0},\partial_{*} J(\widebar{\bm{x}}))$
is not well suited to algorithmic design and analysis, because this quantity is in general highly discontinuous~\cite{Davis-Drusvyatskiy19}, in particular is not stable\footnote{
  Consider a case where a local minimizer
  $\bm{x}^{\star}\in\dom{J}$
  is located on the boundary of
  $\dom{J}$.
  In this case, although we have
  $\dist(\bm{0},\partial_{*} J(\bm{x}^{\star})) = 0$
  for every
  $\partial_{*} \in \{\Psubdiff,\Fsubdiff,\Lsubdiff\}$
  by Fermat's rule, e.g.,~\cite[Cor. 2.7 (a) in Ch.
    2]{Clarke-Ledyaev-Stern-Wolenski98},~\cite[Thm. 10.1]{Rockafellar-Wets98},
  there exists
  $(\bm{x}_{n})_{n=1}^{\infty} \subset \mathcal{X}\setminus \dom{J}$
  converging to
  $\bm{x}^{\star}$
  such that
  $\dist(\bm{0},\partial_{*}
    J(\bm{x}_{n}))=+\infty\ (n\in\mathbb{N})$
  due to
  $\partial_{*} J(\widebar{\bm{x}}) = \emptyset\ (\widebar{\bm{x}} \notin \dom{J})$;
  hence
  $\lim_{n\to\infty}\dist(\bm{0},\partial_{*} J(\bm{x}_{n}))=+\infty \neq 0 = \dist(\bm{0},\partial_{*} J(\bm{x}^{\star}))$.
} around the boundary of
$\dom{J}\coloneqq \{\bm{x} \in \mathcal{X} \mid J(\bm{x})<+\infty\}$.
Hence, alternative stationarity measures have been developed by exploiting special structures of a given cost function
$J$~\cite{Beck17,Themelis-Stella-Patrinos18,Davis-Drusvyatskiy19,Chen-Garcia-Shahrampour22,Pougkakiotis-Kalogerias23,Liu-Xia24,Kume-Yamada25B,Davis-Drusvyatskiy-Shi25,Yagishita-Ito25B}.

For an
$\eta(>0)$-weakly convex function
$J$,
i.e.,
$J+\frac{\eta}{2}\norm{\cdot}^{2}$
is convex, the Moreau envelope-based stationarity measure
\begin{equation}
  (\widebar{\bm{x}} \in \mathcal{X})\quad
  \norm{\nabla \moreau{J}{\gamma}(\widebar{\bm{x}})}
  \ \mathrm{with}\
  \nabla \moreau{J}{\gamma}(\widebar{\bm{x}}) = \frac{\widebar{\bm{x}} - \prox{\gamma J}(\widebar{\bm{x}})}{\gamma}
  \label{eq:gradient_Moreau}
\end{equation}
with some
$\gamma \in (0,\eta^{-1})$
has been used in, e.g., subgradient methods~\cite{Davis-Drusvyatskiy19,Chen-Garcia-Shahrampour22,Pougkakiotis-Kalogerias23,Davis-Drusvyatskiy-Shi25},
because the {\em Moreau envelope}
$\moreau{J}{\gamma}:\mathcal{X}\to\mathbb{R}$
serves as a smooth approximation of
$J$,
i.e.,
$\moreau{J}{\gamma}$
converges pointwise to
$J$
as
$\gamma \searrow 0$,
and is continuously differentiable.
Here,
the {\em proximity operator}
$\prox{\gamma J}$
of
$J$
is defined in general as a set-valued mapping:
\begin{equation}
  (\gamma \in \mathbb{R}_{++})\quad
  \prox{\gamma J}:\mathcal{X}\rightrightarrows \mathcal{X}:\widebar{\bm{x}}  \mapsto \argmin_{\bm{x} \in \mathcal{X}} \left(J(\bm{x}) + \frac{1}{2\gamma}\|\bm{x}-\widebar{\bm{x}}\|^{2}\right), \label{eq:prox_definition}
\end{equation}
where
$\prox{\gamma J}(\widebar{\bm{x}})\in\mathcal{X}$
is single-valued for
$\gamma \in (0,\eta^{-1})$
under the $\eta$-weak convexity of
$J$.
In another important case where
the cost function
$J$
admits the structure
$J=\cost+\phi$
with a differentiable function
$\cost:\mathcal{X}\to\mathbb{R}$
such that
$\nabla \cost:\mathcal{X}\to\mathcal{X}$
is globally Lipschitz continuous, and
a nonsmooth nonconvex function
$\phi:\mathcal{X}\to\exR$,
the quantity
\begin{equation}
  (\widebar{\bm{x}} \in \mathcal{X}) \quad
  \dist\left(\bm{0},\mathcal{G}_{\gamma}^{\cost,\phi}(\widebar{\bm{x}})\right)\ \mathrm{with}\
  \mathcal{G}_{\gamma}^{\cost,\phi}:\mathcal{X}\rightrightarrows \mathcal{X}: \widebar{\bm{x}} \mapsto \frac{\widebar{\bm{x}} - \prox{\gamma\phi}\left(\widebar{\bm{x}}-\gamma \nabla\cost\left(\widebar{\bm{x}}\right)\right)}{\gamma} \label{eq:gradient_mapping}
\end{equation}
with some
$\gamma \in \mathbb{R}_{++}$
has been utilized as a stationarity measure in, e.g.,~\cite{Beck17,Themelis-Stella-Patrinos18,Yagishita-Ito25B} for proximal gradient methods.
Here,
$\mathcal{G}_{\gamma}^{\cost,\phi}$
is called
the {\em gradient mapping}~\cite{Beck17} under the convexity of
$\phi$,
or the {\em forward-backward residual}~\cite{Themelis-Stella-Patrinos18} in the absence of the convexity of
$\phi$.
These stationarity measures in~\eqref{eq:gradient_Moreau} and~\eqref{eq:gradient_mapping} serve as mathematical tools therein for establishing convergence analyses, including rate analyses, of iterative algorithms in nonsmooth nonconvex optimization.
However, beyond the above cases,
stationarity measures for nonsmooth nonconvex optimization have not been sufficiently investigated.
In particular, it is unclear how to construct a stationarity measure that is both mathematically meaningful and useful for designing and analyzing iterative algorithms in a case where both
$\cost$
and
$\phi$
in
$J=\cost+\phi$
are nonsmooth and nonconvex.

In this paper, we address this issue for the nonsmooth optimization problem
\begin{equation}
  \mathop{\mathrm{minimize}}\limits_{\bm{x}\in\mathcal{X}} J(\bm{x}) \coloneqq (\cost+\phi)(\bm{x})
  \label{eq:problem}
\end{equation}
under the following standing assumption:

\begin{assumption}[Standing assumption for~\eqref{eq:problem}]
  \label{assumption:basic}
  Throughout this paper, we assume:
  \begin{enumerate}[label=(\alph*)]
    \item
          \label{enum:problem:origin:minimizer}
          $J=\cost + \phi$
          is bounded below, i.e.,
          $\inf_{\bm{x}\in\mathcal{X}}(\cost+\phi)(\bm{x}) > -\infty$;
    \item
          \label{enum:problem:origin:cost}
          $\cost:\mathcal{X} \to \exR$
          is proper lower semicontinuous and
          locally Lipschitz continuous at every
          $\widebar{\bm{x}} \in \mathrm{int}(\dom{\cost})$,
          and
          $\dom{\cost}$
          is convex with
          $\dom{\phi} \subset \mathrm{int}(\dom{\cost})$
          (hence
          $\dom{J}=\dom{\phi}$;
          and
          $\mathrm{int}(\dom{\cost})$
          is convex by~\cite[Prop. 3.45 (ii)]{Bauschke-Combettes17});
    \item
          \label{enum:problem:origin:phi}
          $\phi:\mathcal{X}\to \exR$
          is proper lower semicontinuous and prox-bounded with threshold
          $\gamma_{\phi} \in \exRp$,
          i.e.,
          $\phi(\cdot)+\frac{1}{2\gamma} \norm{\cdot}^{2}$
          is bounded below for all
          $\gamma \in (0,\gamma_{\phi})$,
          (Note:
          for
          $\gamma \in (0,\gamma_{\phi})$
          and
          $\widebar{\bm{x}} \in \mathcal{X}$,
          $\prox{\gamma\phi}(\widebar{\bm{x}})\neq \emptyset$
          is guaranteed; see Fact~\ref{fact:prox_set_mapping}~\ref{enum:fact:prox_set_mapping:nonempty_resolvent}).
  \end{enumerate}
\end{assumption}
In this setting, we introduce a {\em gradient mapping-type stationarity measure}
\begin{align}
  (\gamma \in \mathbb{R}_{++}) \quad
   & \mathcal{M}_{\gamma}^{\cost,\phi}
  : \mathcal{X} \to [0,+\infty]
  :\widebar{\bm{x}}
  \mapsto
  \dist\left(\bm{0},\frac{\widebar{\bm{x}} - \prox{\gamma\phi}\left(\widebar{\bm{x}}-\gamma \Lsubdiff \cost\left(\widebar{\bm{x}}\right)\right)}{\gamma}\right)                                                                                              \nonumber \\
   & = \inf\left\{\norm{\frac{\widebar{\bm{x}} - \bm{p}}{\gamma}} \mid \bm{p}\in \prox{\gamma\phi}(\widebar{\bm{x}}-\gamma \bm{v}), \ \bm{v} \in \Lsubdiff \cost(\widebar{\bm{x}}) \right\}, \label{eq:measure}
\end{align}
where
$\Lsubdiff \cost:\mathcal{X}\rightrightarrows\mathcal{X}$
is
the {\em limiting  subdifferential} of
$\cost$
(see Definition~\ref{definition:subdifferential}~\ref{enum:definition:subdifferential:limiting}).
The measure
$\mathcal{M}_{\gamma}^{\cost,\phi}$
can be viewed as a generalization of the stationarity measure in~\eqref{eq:gradient_mapping}
by replacing
$\nabla \cost$
with
$\Lsubdiff \cost$
in the gradient mapping
$\mathcal{G}_{\gamma}^{\cost,\phi}$.
Moreover, the authors recently studied in~\cite{Kume-Yamada25B} the measure
$\mathcal{M}_{\gamma}^{\cost,\phi}$
in a special case where
$\phi$
is convex,
and
technical discussions for analysis of
$\mathcal{M}_{\gamma}^{\cost,\phi}$
rely heavily on the convexity of
$\phi$.
Since
$\phi$
is not assumed to be convex in this paper, we cannot rely on such standard facts
under the convexity of
$\phi$.
Instead, we leverage tools from nonsmooth and set-valued analyses in order to
derive useful properties of
$\mathcal{M}_{\gamma}^{\cost,\phi}$
in Section~\ref{sec:stationarity},
especially under Assumption~\ref{assumption:regular} or~\ref{assumption:smooth} below, considered independently.

\begin{assumption}[Regularity assumption; see Definition~\ref{definition:regular}]
  \label{assumption:regular}
  Together with Assumption~\ref{assumption:basic}, we assume the following:
  \begin{enumerate}[label=(\alph*)]
    \item
          \label{enum:assumption:regular:cost}
          $\cost$
          is {\em lower regular} at every
          $\widebar{\bm{x}} \in \mathrm{int}(\dom{\cost})$;
    \item
          \label{enum:assumption:regular:phi}
          $\phi$
          is {\em prox-regular} at every
          $\widebar{\bm{x}} \in \dom{\phi}$.
  \end{enumerate}
\end{assumption}

\begin{assumption}[Local Lipschitz smoothness of $\cost$]
  \label{assumption:smooth}
  Together with Assumption~\ref{assumption:basic}, we assume that
  $\cost$
  is differentiable over
  $\mathrm{int}(\dom{\cost})$
  such that its gradient
  $\nabla \cost:\mathrm{int}(\dom{\cost})\to\mathcal{X}$
  is locally Lipschitz continuous at every
  $\widebar{\bm{x}} \in \mathrm{int}(\dom{\cost})$.
\end{assumption}

\begin{remark}[On Assumptions~\ref{assumption:regular} and~\ref{assumption:smooth}]
  Assumption~\ref{assumption:smooth} has recently been imposed in~\cite{Kanzow-Mehlitz22,DeMarchi23,Jia-Kanwoz-Mehlitz23,Kanzow-Lehmann25,Yagishita-Ito25A} for convergence analysis of proximal gradient methods as a weaker condition than the standard one, namely the global Lipschitz continuity of
  $\nabla \cost$~\cite{Attouch-Bolte-Svaiter13,Li-Lin15,Beck17,Themelis-Stella-Patrinos18,Yagishita-Ito25B}.
  In contrast, although Assumption~\ref{assumption:regular} seems to have been less emphasized in the literature, it covers many important examples in signal processing and machine learning applications (see Examples~\ref{example:cost} and~\ref{example:phi}; see also our recent conference paper~\cite{Kume-Yamada25C} for robust low-rank matrix recovery).
\end{remark}

The main analytical contributions of this paper are characterizations of stationarity conditions of
$\cost+\phi$
in~\eqref{eq:problem}
(see Definition~\ref{definition:optimality})
via
$\mathcal{M}_{\gamma}^{\cost,\phi}$
in Section~\ref{sec:stationarity} as follows:
\begin{enumerate}[label=\arabic*.]
  \item
        For a sufficiently small
        $\gamma_{\bm{x}^{\star}} \in (0,\gamma_{\phi})$,
        a {\em Fr\'echet stationary point} (resp. {\em proximal stationary point})
        $\bm{x}^{\star} \in \dom{\phi}$
        of
        $\cost +\phi$
        can be characterized in Proposition~\ref{proposition:criticality_stationarity} by the condition
        $\mathcal{M}_{\gamma_{\bm{x}^{\star}}}^{\cost,\phi}(\bm{x}^{\star}) = 0$
        under Assumption~\ref{assumption:regular} (resp.
        Assumption~\ref{assumption:smooth}).
  \item
        Such stationary points
        $\bm{x}^{\star}\in \dom{\phi}$
        of
        $\cost+\phi$
        can also be characterized through the asymptotic condition
        $\liminf_{n\to\infty}\mathcal{M}_{\gamma_{n}}^{\cost,\phi}(\bm{x}_{n})=0$
        for some
        $(\gamma_{n})_{n=1}^{\infty}\subset(0,\gamma_{\phi})$
        and
        $(\bm{x}_{n})_{n=1}^{\infty}\subset\mathcal{X}$
        converging to
        $\bm{x}^{\star}$
        (see Theorem~\ref{theorem:P_stationarity} for the proximal stationarity under Assumption~\ref{assumption:smooth}; Theorem~\ref{theorem:F_stationarity} for the Fr\'echet stationarity under Assumption~\ref{assumption:regular} together with a technical Assumption~\ref{assumption:phi} on
        $\phi$).
        This result is motivated by the fact that iterative algorithms typically produce a sequence
        $(\bm{x}_{n})_{n=1}^{\infty}$
        with stepsizes
        $(\gamma_{n})_{n=1}^{\infty}$
        along which a stationarity measure is expected to vanish asymptotically.
  \item
        Under Assumptions~\ref{assumption:regular} and~\ref{assumption:phi},
        a smoothing-based analogue of the asymptotic characterization
        in Theorem~\ref{theorem:F_stationarity}
        is also presented in Theorem~\ref{theorem:characterization_stationarity_asymptotic}.
        More precisely, every Fr\'echet stationary point
        $\bm{x}^{\star} \in \dom{\phi}$
        of
        $\cost + \phi$
        can be characterized
        through the asymptotic condition
        $\liminf_{n\to\infty}\mathcal{M}_{\gamma_{n}}^{\cost^{\langle \mu_{n}\rangle},\phi}(\bm{x}_{n})=0$
        with some
        $(\gamma_{n})_{n=1}^{\infty}\subset(0,\gamma_{\phi})$,
        $(\mu_{n})_{n=1}^{\infty} \searrow 0$,
        and
        $(\bm{x}_{n})_{n=1}^{\infty}\subset\mathcal{X}$
        converging to
        $\bm{x}^{\star}$,
        where
        $\{\cost^{\langle\mu\rangle}\}_{\mu\in(0,\widetilde{\mu})}\ (\widetilde{\mu} \in \exRp)$
        is a {\em smoothing function}
        of
        $\cost$
        (see Definition~\ref{definition:smoothing}).
        This result enables us to combine the proposed analysis with smoothing techniques~\cite{Zhang-Chen09,Chen12,Burke-Hoheisel-Kanzow13,Bot-Hendrich15,Burke-Hoheisel17,Bohm-Wright21,Liu-Xia24,Bian-Chen20,Yu-Zhang22,Nishioka-Kanno24,Kume-Yamada24A,Kume-Yamada25B}
        developed for nonsmooth optimization.
\end{enumerate}

In Sections~\ref{sec:application} and~\ref{sec:algorithm}, we present two applications of the above asymptotic properties of
$\mathcal{M}_{\gamma}^{\cost,\phi}$
to proximal gradient-type methods.

In Section~\ref{sec:application:existing:PGM},
the first application with Theorem~\ref{theorem:P_stationarity} revisits convergence analyses of proximal gradient methods (PGM) under Assumption~\ref{assumption:smooth}.
Conventional convergence analyses of PGM guarantee, at best, that every cluster point of a generated sequence is a limiting stationary point~\cite{Kanzow-Mehlitz22,DeMarchi23,Jia-Kanwoz-Mehlitz23,Kanzow-Lehmann25}, or Fr\'echet stationary point~\cite{Yagishita-Ito25A}, of
$\cost+\phi$
in~\eqref{eq:problem},
where
the limiting and Fr\'echet stationarities are weaker than the proximal stationarity (see~\eqref{eq:hierarchical_optimality}).
Recently,~\cite{Olikier-Waldspurger25} establishes a subsequence convergence guarantee to a proximal stationary point of a projected gradient descent method, corresponding to the case
$\phi=\iota_{C}$
with a closed set
$C \subset \mathcal{X}$
in the problem~\eqref{eq:problem},
where the indicator function
$\iota_{C}$
is defined by
\begin{equation}
  \iota_{C}:\mathcal{X}\to \exR:\bm{x}\mapsto
  \begin{cases}
    0,       & \mathrm{if}\ \bm{x}\in C; \\
    +\infty, & \mathrm{if}\ \bm{x}\notin C.
  \end{cases}
  \label{eq:indicator}
\end{equation}
However,
as raised in~\cite[Sect. 9]{Olikier-Waldspurger25},
it remains open whether the result in~\cite{Olikier-Waldspurger25} can be extended from
$\iota_{C}$
to a general proper lower semicontinuous and prox-bounded function
$\phi$
or not.
By using our asymptotic analysis of
$\mathcal{M}_{\gamma}^{\cost,\phi}$
in Theorem~\ref{theorem:P_stationarity},
Corollary~\ref{corollary:refine_PGM} shows that every cluster point of a generated sequence of PGM
is actually a proximal stationary point of
$\cost + \phi$
under Assumption~\ref{assumption:smooth}
with a standard setting of PGM where three conditions in~\ref{enum:PGM_step2} in Section~\ref{sec:application} are met.

In Section~\ref{sec:algorithm}, the second application with Theorem~\ref{theorem:characterization_stationarity_asymptotic} presents a proximal variable smoothing algorithm for minimizing
$\cost+\phi$
in~\eqref{eq:problem} under Assumption~\ref{assumption:regular} and the availability of a {\em smoothing function} of
$\cost$
together with a technical assumption on
$\phi$
(Assumption~\ref{assumption:phi}).
Proximal variable smoothing algorithms have been proposed in~\cite{Liu-Xia24,Lopes-Peres-Bilches25,Kume-Yamada25B} originally for the problem~\eqref{eq:problem} with a structured composite nonsmooth function
$\cost=h+g\circ\mathfrak{S}$
(see Example~\ref{example:cost} for the details)
and a proper lower semicontinuous convex function
$\phi$.
In~\cite{Liu-Xia24,Lopes-Peres-Bilches25,Kume-Yamada25B}, a smoothing function of
$\cost$
is given by replacing a nonsmooth weakly convex function
$g$
with its {\em Moreau envelope}
(see also Example~\ref{example:cost}).
The proposed algorithm differs from~\cite{Liu-Xia24,Lopes-Peres-Bilches25,Kume-Yamada25B} in three aspects:
the proposed algorithm admits
(i) more general
$\cost$
satisfying Assumption~\ref{assumption:regular}~\ref{enum:assumption:regular:cost}
and its smoothing function satisfying Definition~\ref{definition:smoothing} than those considered in~\cite{Liu-Xia24,Lopes-Peres-Bilches25,Kume-Yamada25B};
(ii) a prox-regular function
$\phi$
beyond the convexity;
(iii) a {\em nonmonotone linesearch} for stepsize selection, whereas the existing proximal variable smoothing algorithms employ a diminishing stepsize rule~\cite{Liu-Xia24,Lopes-Peres-Bilches25} or a monotone linesearch~\cite{Kume-Yamada25B}.
We note that nonmonotone linesearch algorithms~\cite{Zhang-Hager04,Ahookhosh-Ghaderi17,Themelis-Stella-Patrinos18,Kanzow-Mehlitz22,DeMarchi23,Jia-Kanwoz-Mehlitz23,Kanzow-Lehmann25,Yagishita-Ito25A}
are often employed as numerically efficient choices of stepsizes in the literature (see, e.g.,~\cite{Zhang-Hager04,Ahookhosh-Ghaderi17}).
In Theorem~\ref{theorem:convergence_extension}, for
sequences
$(\bm{x}_{n})_{n=1}^{\infty} \subset \mathcal{X}$
and stepsizes
$(\gamma_{n})_{n=1}^{\infty} \subset (0,\gamma_{\phi})$
generated by the proposed algorithm,
we show
$\liminf_{n\to\infty}\mathcal{M}_{\gamma_{n}}^{\cost^{\langle \mu_{n} \rangle},\phi}(\bm{x}_{n}) = 0$
and that every cluster point of a subsequence
$(\bm{x}_{m(l)})_{l=1}^{\infty}$
such that
$\lim_{l\to\infty} \mathcal{M}_{\gamma_{m(l)}}^{\cost^{\langle \mu_{m(l)} \rangle},\phi}(\bm{x}_{m(l)}) = 0$
is a Fr\'echet stationary point of
$\cost + \phi$.

The remainder of this paper is organized as follows.
The rest of Section~\ref{sec:introduction} is devoted to notation.
Section~\ref{sec:preliminary} reviews basic tools in nonsmooth analysis,
while~\ref{sec:appendix:set_valued} summarizes facts on the set-valued analysis used in the paper.
Section~\ref{sec:stationarity} establishes basic and asymptotic properties of the proposed stationarity measure.
Section~\ref{sec:application:existing} revisits convergence analyses of proximal gradient methods through the asymptotic behavior of the proposed measure.
Section~\ref{sec:algorithm} presents a proximal variable smoothing algorithm with nonmonotone linesearch and its convergence analysis.
Section~\ref{sec:conclusion} concludes the paper.

  {\bf Notation.}
$\mathbb{N}$,
$\mathbb{R}$,
$\mathbb{R}_{+}$,
and
$\mathbb{R}_{++}$
denote respectively the sets of all positive integers, all real numbers, all nonnegative real numbers, and all positive real numbers.
$\lceil \cdot \rceil$
stands for the ceiling function.
$\Id$
stands for the identity operator.
$\|\cdot\|$
and
$\inprod{\cdot}{\cdot}$
are respectively the Euclidean norm and the standard inner product.
For given real sequence
$(a_{n})_{n=1}^{\infty} \subset \mathbb{R}$
and
$\widebar{a} \in \mathbb{R}$,
$a_{n} \searrow \widebar{a}$
means in this paper that
$(a_{n})_{n=1}^{\infty}$
is monotonically nonincreasing and
$\lim_{n\to\infty} a_{n} = \widebar{a}$.
For a given point
$\widebar{\bm{x}} \in \mathcal{X}$
and a set
$\mathcal{E} \subset \mathcal{X}$,
$B(\widebar{\bm{x}},\delta) \coloneqq \{\bm{x}\in \mathcal{X} \mid \norm{\bm{x}-\widebar{\bm{x}}} < \delta \}$
stands for the open ball centered at
$\widebar{\bm{x}}$
with radius
$\delta > 0$,
and
$\dist:\mathcal{X}\times 2^{\mathcal{X}} \to [0,+\infty]$
stands for the distance function
$\dist(\widebar{\bm{x}},\mathcal{E})\coloneqq \inf\{\norm{\bm{v}-\widebar{\bm{x}}}\mid \bm{v} \in \mathcal{E}\}$,
where
$\dist(\widebar{\bm{x}}, \mathcal{E})$
is defined by
$+\infty$
for
$\mathcal{E} = \emptyset$.

A mapping
$\mathcal{F}:\mathcal{X}\to\mathcal{Y}$
to a Euclidean space
$\mathcal{Y}$
is said to be $L_{\mathcal{F}}$-{\em Lipschitz continuous} over
$\mathcal{E} \subset \mathcal{X}$
(with a Lipschitz constant
$L_{\mathcal{F}} > 0$)
if
$\norm{\mathcal{F}(\bm{x}_{1}) - \mathcal{F}(\bm{x}_{2})} \leq L_{\mathcal{F}}\norm{\bm{x}_{1}-\bm{x}_{2}}\ (\forall \bm{x}_{1},\bm{x}_{2} \in \mathcal{E})$.
For a function
$J:\mathcal{X} \to \exR$,
$J$
is called (a) proper if
$\dom{J}\coloneqq \{\bm{x} \in \mathcal{X} \mid J(\bm{x}) < +\infty\} \neq \emptyset$;
(b)~lower semicontinuous if
$\{\bm{x} \in \mathcal{X} \mid J(\bm{x})\leq a\} \subset \mathcal{X}$
is closed for every
$a \in \mathbb{R}$,
or equivalently,
$\liminf_{n\to\infty}J(\bm{x}_{n}) \geq J(\widebar{\bm{x}})$
holds for every
$\widebar{\bm{x}}\in \mathcal{X}$
and for all
$(\bm{x}_{n})_{n=1}^{\infty} \subset \mathcal{X}$
converging to
$\widebar{\bm{x}}$;
(c) convex if
$J(t\bm{x}_{1}+(1-t)\bm{x}_{2}) \leq t J(\bm{x}_{1}) + (1-t)J(\bm{x}_{2})\ (\forall \bm{x}_{1},\bm{x}_{2} \in \dom{J}, \forall t \in [0,1])$;
(d)
locally Lipschitz continuous at
$\widebar{\bm{x}}\in \dom{J}$
if
there exists an open neighborhood
$\mathcal{N}_{\widebar{\bm{x}}} \subset \mathcal{X}$
of
$\widebar{\bm{x}}$
such that
$J$
is Lipschitz continuous over
$\mathcal{N}_{\widebar{\bm{x}}}$.

For convenience, let
$\Ninfty\coloneqq \{\mathcal{N} \subset \mathbb{N} \mid \mathcal{N}\ \mathrm{infinite}\}$.
For a set sequence
$(\mathcal{E}_{n})_{n=1}^{\infty}$
with subsets
$\mathcal{E}_{n} \subset \mathcal{X}$,
{\em the outer limit}~\cite[Def. 4.1]{Rockafellar-Wets98}
of
$(\mathcal{E}_{n})_{n=1}^{\infty}$
is defined by
\begin{equation*}
  \Limsup_{n\to\infty} \mathcal{E}_{n}
  \coloneqq  \left\{\bm{v} \in \mathcal{X} \mid
  \exists\mathcal{N}\in \Ninfty,\  \exists \bm{v}_{n}\in \mathcal{E}_{n}\ (n\in \mathcal{N})\
  {\rm s.t.}\ \bm{v}= \lim_{\mathcal{N}\ni n\to\infty} \bm{v}_{n}
  \right\}.
\end{equation*}
We simply write
$\Limsup_{n\to\infty}\bm{v}_{n} \coloneqq \Limsup_{n\to\infty}\{\bm{v}_{n}\}$
even for the outer limit of a vector sequence
$(\bm{v}_{n})_{n=1}^{\infty} \subset \mathcal{X}$,
i.e.,
the set of all cluster points of
$(\bm{v}_{n})_{n=1}^{\infty}$.
For a set-valued mapping
$\mathcal{S}:\mathcal{X}\rightrightarrows\mathcal{X}$,
we write
$\mathcal{S}(\mathcal{E})\coloneqq \bigcup_{\bm{x} \in \mathcal{E}} \mathcal{S}(\bm{x})$
for
$\mathcal{E} \subset \mathcal{X}$,
and
$\mathcal{S}$
is said to be {\em outer semicontinuous} at
$\widebar{\bm{x}} \in \mathcal{X}$~\cite[Def. 5.4]{Rockafellar-Wets98}
if
$\Limsup_{n\to\infty} \mathcal{S}(\bm{x}_{n}) \subset \mathcal{S}(\widebar{\bm{x}})$
holds
for every sequence
$(\bm{x}_{n})_{n=1}^{\infty} \subset \mathcal{X}$
converging to
$\widebar{\bm{x}}$
(or equivalently, if
$\bigcup_{\mathcal{X}\ni \bm{x}_{n} \to\widebar{\bm{x}}}\Limsup_{n\to\infty}\mathcal{S}(\bm{x}_{n}) \subset \mathcal{S}(\widebar{\bm{x}})$
holds).
For
$t_{1},t_{2} \in \mathbb{R}$,
$\widebar{\bm{x}} \in \mathcal{X}$,
and
$\mathcal{E}_{1}, \mathcal{E}_{2} \subset \mathcal{X}$,
we use Minkowski sums of sets (see~\cite[p.25]{Rockafellar-Wets98}):
$\widebar{\bm{x}} + t_{1} \mathcal{E}_{1} \coloneqq \{\widebar{\bm{x}} + t_{1}\bm{v}_{1} \mid \bm{v}_{1}\in \mathcal{E}_{1}\} \subset \mathcal{X}$;
$t_{1}\mathcal{E}_{1}+t_{2}\mathcal{E}_{2} \coloneqq \{t_{1}\bm{v}_{1}+t_{2}\bm{v}_{2} \mid \bm{v}_{1}\in \mathcal{E}_{1}, \bm{v}_{2} \in \mathcal{E}_{2}\} \subset \mathcal{X}$,
where
these are understood as
$\emptyset$
if
$\mathcal{E}_{1}=\emptyset$
(or
$\mathcal{E}_{2}=\emptyset$
for the latter).

\section{Preliminaries on nonsmooth analysis} \label{sec:preliminary}
\label{sec:preliminary:subdifferential}
The standard subdifferential for convex functions has been extended to those for nonconvex functions as follows.

\begin{definition}[Subdifferentials~{\cite[Def. 8.3]{Rockafellar-Wets98}}]
  \label{definition:subdifferential}
  Let
  $\psi:\mathcal{X} \to \exR$
  be a proper lower semicontinuous function.
  For
  $\widebar{\bm{x}} \in \dom{\psi}$,
  a vector
  $\bm{v}  \in \mathcal{X}$
  is said to be
  \begin{enumerate}[label=(\alph*)]
    \item
          a {\em proximal subgradient} of
          $\psi$
          at
          $\widebar{\bm{x}}$,
          denoted by
          $\bm{v} \in \Psubdiff \psi(\widebar{\bm{x}})$~\cite[Def. 8.45]{Rockafellar-Wets98},
          if there exist
          $\gamma \in \mathbb{R}_{++}$
          and
          $\delta \in \mathbb{R}_{++}$
          such that
          $\psi(\bm{x}) \geq \psi(\widebar{\bm{x}}) + \inprod{\bm{v}}{\bm{x}-\widebar{\bm{x}}} - \frac{1}{2\gamma}\norm{\bm{x}-\widebar{\bm{x}}}^{2}\ (\forall \bm{x} \in B(\widebar{\bm{x}}, \delta))$;
    \item
          \label{enum:definition:subdifferential:Frechet}
          a {\em Fr\'echet (or regular) subgradient} of
          $\psi$
          at
          $\widebar{\bm{x}}$,
          denoted by
          $\bm{v} \in \Fsubdiff \psi(\widebar{\bm{x}})\subset \mathcal{X}$,
          if
            {\thickmuskip=2mu plus 1mu minus 1mu
              \medmuskip=1mu plus 1mu minus 1mu
              $\liminf\limits_{\mathcal{X}\setminus\{\widebar{\bm{x}}\}\ni \bm{x}\to \widebar{\bm{x}}} \frac{\psi(\bm{x})-\psi(\widebar{\bm{x}}) - \inprod{\bm{v}}{\bm{x}-\widebar{\bm{x}}}}{\norm{\bm{x}-\widebar{\bm{x}}}} \coloneqq \sup\limits_{\epsilon>0} \left(\inf\limits_{0<\norm{\bm{x}-\widebar{\bm{x}}}< \epsilon}\frac{\psi(\bm{x})-\psi(\widebar{\bm{x}}) - \inprod{\bm{v}}{\bm{x}-\widebar{\bm{x}}}}{\norm{\bm{x}-\widebar{\bm{x}}}}\right) \geq 0$}
          holds;
    \item
          \label{enum:definition:subdifferential:limiting}
          a {\em limiting (or general) subgradient} of
          $\psi$
          at
          $\widebar{\bm{x}}$,
          denoted by
          $\bm{v} \in \Lsubdiff \psi(\widebar{\bm{x}}) \subset \mathcal{X}$,
          if there exist
          (i)
          $(\bm{x}_{n})_{n=1}^{\infty} \subset \mathcal{X}$
          with
          $\lim_{n\to\infty} \bm{x}_{n} = \widebar{\bm{x}}$
          and
          $\lim_{n\to\infty} \psi(\bm{x}_{n}) = \psi(\widebar{\bm{x}})$,
          and
          (ii)
          $(\bm{v}_{n})_{n=1}^{\infty} \subset \mathcal{X}$
          with
          $\bm{v}_{n} \in \Fsubdiff \psi(\bm{x}_{n})\ (n\in\mathbb{N})$
          such that
          $\bm{v} = \lim_{n\to\infty} \bm{v}_{n}$,
  \end{enumerate}
  where
  $\Psubdiff \psi(\widebar{\bm{x}})$,
  $\Fsubdiff \psi(\widebar{\bm{x}})$
  and
  $\Lsubdiff \psi(\widebar{\bm{x}})$
  are called respectively the
    {\em proximal},
    {\em Fr\'echet} and {\em limiting subdifferentials} of
  $\psi$
  at
  $\widebar{\bm{x}}\in \dom{\psi}$
  [for
    $\widebar{\bm{x}} \notin \dom{\psi}$,
    $\Psubdiff \psi(\widebar{\bm{x}})$,
    $\Fsubdiff \psi(\widebar{\bm{x}})$
    and
    $\Lsubdiff \psi(\widebar{\bm{x}})$
    are defined by
    $\emptyset$].
  By, e.g.,~\cite[Prop. 3.6]{Wang-Wang24},
  we have
  \begin{equation}
    (\bm{x}\in\mathcal{X}) \quad
    \Psubdiff \psi(\bm{x}) \subset \Fsubdiff \psi(\bm{x}) \subset \Lsubdiff \psi(\bm{x}). \label{eq:subdifferential_relation}
  \end{equation}
\end{definition}

The notion of regularities below is a key ingredient for deriving sum rules of subdifferentials (see~\eqref{eq:sum_subdifferential}).

\begin{definition}[Regularity]
  \label{definition:regular}
  Let
  $\psi:\mathcal{X}\to\exR$
  be a proper lower semicontinuous function.
  \begin{enumerate}[label=(\alph*)]
    \item
          \label{enum:definition:regular:lower_regular}
          $\psi$
          is said to be {\em lower regular} at
          $\widebar{\bm{x}} \in \dom{\psi}$~\cite[Eq. (1.49) in p.31]{Mordukhovich24}
          if
          $\Fsubdiff \psi(\widebar{\bm{x}}) = \Lsubdiff \psi(\widebar{\bm{x}})$
          holds,
          where the inclusion
          $\subset$
          always holds by~\eqref{eq:subdifferential_relation}.
    \item
          \label{enum:definition:regular:prox-regular}
          $\psi$
          is said to be {\em prox-regular} at
          $\widebar{\bm{x}}\in \dom{\psi}$
          for
          $\widebar{\bm{v}} \in \Lsubdiff \psi(\widebar{\bm{x}})$~\cite[Def. 1.1]{Poliquin-Rockafellar96}~\cite[Def. 13.27]{Rockafellar-Wets98}
          if
          there exist
          $\delta > 0$
          and
          $\eta \geq 0$
          such that
          \begin{equation}
            (\forall \bm{x}'\in B(\widebar{\bm{x}},\delta)) \quad
            \psi(\bm{x}')
            \geq \psi(\bm{x}) + \inprod{\bm{v}}{\bm{x}' - \bm{x}} - \frac{\eta}{2}\norm{\bm{x}'-\bm{x}}^{2} \label{eq:prox_regular}
          \end{equation}
          whenever
          $\bm{x} \in B(\widebar{\bm{x}}, \delta)$,
          $\bm{v} \in \Lsubdiff \psi(\bm{x})\cap B(\widebar{\bm{v}}, \delta)$,
          and
          $\psi(\bm{x}) < \psi(\widebar{\bm{x}}) + \delta$.
          $\psi$
          is said to be prox-regular at
          $\widebar{\bm{x}} \in \dom{\psi}$
          if
          $\psi$
          is prox-regular at
          $\widebar{\bm{x}}$
          for all
          $\widebar{\bm{v}} \in \Lsubdiff \psi(\widebar{\bm{x}})$.
  \end{enumerate}
\end{definition}
We refer to Examples~\ref{example:cost} and~\ref{example:phi} for examples of regular functions of our interest.
In contrast, the function
$\mathbb{R}\to\mathbb{R}:x\mapsto -\abs{x}$
is not prox-regular\footnote{
  With
  $\psi\coloneqq -\abs{\cdot}$,
  we have
  $\Lsubdiff \psi(0) = \{-1,1\}$.
  Assume contrarily that
  $\psi$
  is prox-regular at
  $0$
  for
  $1$,
  i.e., there exist
  $\delta > 0$
  and
  $\eta \geq 0$
  such that
  $\psi(x') \geq \psi(x)+v(x'-x)-\frac{\eta}{2}(x'-x)^{2}$
  for all
  $x, x' \in B(0,\delta)$,
  $v \in \Lsubdiff\psi(x)$
  with
  $v \in B(1,\delta)$,
  and
  $\psi(x) < \delta$.
  By using convention
  $0^{-1} = +\infty$
  in this footnote,
  for
  $t \in (0,\min\{\delta,\eta^{-1}\})$,
  let
  $x = -t$,
  $x' = t$
  and
  $v = 1 \in \Lsubdiff \psi(-t)$.
  Then, the inequality
  $\psi(x') \geq \psi(x)+v(x'-x)-\frac{\eta}{2}(x'-x)^{2}$
  implies
  $-t \geq -t + 2t - 2\eta t^{2}$
  $(\Leftrightarrow 0 \geq t(1-\eta t))$,
  which contradicts
  $0 < t(1-\eta t)\ (\forall t\in (0,\eta^{-1}))$.
}at
$0$.

Under Assumption~\ref{assumption:regular}~\ref{enum:assumption:regular:phi}, i.e., the prox-regularity of
$\phi$
at every
$\widebar{\bm{x}}\in\dom{\phi}$,
the following equality holds (see, e.g.,~\cite[Paragraph just after Def.~1.1]{Poliquin-Rockafellar96}):
\begin{equation}
  (\bm{x} \in \mathcal{X}) \quad
  \Psubdiff \phi(\bm{x}) = \Fsubdiff \phi(\bm{x}) = \Lsubdiff \phi(\bm{x}), \label{eq:prox_regular_lower_regular}
\end{equation}
which implies the lower regularity of
$\phi$.
By further assuming Assumption~\ref{assumption:regular}~\ref{enum:assumption:regular:cost}, i.e., the lower regularity of
$\cost$
at every
$\widebar{\bm{x}} \in \mathrm{int}(\dom{\cost})$,
we have
\begin{align}
  (\bm{x} \in \mathrm{int}(\dom{\cost}))\quad
  \Fsubdiff \cost(\bm{x}) + \Fsubdiff \phi(\bm{x})
   & = \Fsubdiff(\cost+\phi)(\bm{x})
  = \Lsubdiff(\cost+\phi)(\bm{x}) \\
   & = \Lsubdiff \cost(\bm{x}) + \Lsubdiff \phi (\bm{x})
  = \Lsubdiff \cost(\bm{x}) + \Psubdiff \phi (\bm{x})  \label{eq:sum_subdifferential}
\end{align}
by
$\Fsubdiff \cost(\bm{x}) = \Lsubdiff \cost(\bm{x})$,
$\Psubdiff \phi(\bm{x}) = \Fsubdiff \phi(\bm{x}) = \Lsubdiff \phi(\bm{x})$
in~\eqref{eq:prox_regular_lower_regular}, and
$\Fsubdiff \cost(\bm{x}) + \Fsubdiff \phi(\bm{x}) \subset \Fsubdiff (\cost+\phi)(\bm{x}) \overset{\eqref{eq:subdifferential_relation}}{\subset} \Lsubdiff (\cost+\phi)(\bm{x}) \subset \Lsubdiff \cost(\bm{x}) + \Lsubdiff \phi(\bm{x})$,
where the first and third inclusion hold from well-known sum rules~\cite[Cor. 10.9]{Rockafellar-Wets98} and~\cite[Exe. 10.10]{Rockafellar-Wets98} with the local Lipschitz continuity (or equivalently, strict continuity) of
$\cost$
at
$\bm{x}$.

Under Assumption~\ref{assumption:smooth} alone, i.e., the local Lipschitz smoothness of
$\cost$
at every
$\widebar{\bm{x}} \in \mathrm{int}(\dom{\cost})$,
we have
(see~\cite[Prop. 2]{Wang-Ye-Yuan-Zeng-Zhang22} and~\cite[Exe.
  8.8 (b)]{Rockafellar-Wets98} respectively):
\begin{equation}
  (\bm{x} \in \mathrm{int}(\dom{\cost})) \quad
  \Psubdiff(\cost+\phi)(\bm{x}) = \nabla\cost(\bm{x}) + \Psubdiff\phi(\bm{x}) = \Lsubdiff\cost(\bm{x}) + \Psubdiff\phi(\bm{x}). \label{eq:sum_subdifferential_smooth}
\end{equation}

We define the notion of optimality and stationarity by using the subdifferential.

\begin{definition}[Optimality]
  \label{definition:optimality}
  For
  $\cost+\phi$
  in Assumption~\ref{assumption:basic},
  $\bm{x}^{\star} \in  \dom{\phi}$
  is said to be
  \begin{enumerate}[label=(\alph*)]
    \item
          a {\em local minimizer} of
          $\cost+\phi$
          if there exists an open neighborhood
          $\mathcal{N}_{\bm{x}^{\star}}\subset \mathcal{X}$
          of
          $\bm{x}^{\star}$
          such that
          $(\cost + \phi)(\bm{x}^{\star}) \leq (\cost + \phi)(\bm{x})\ (\bm{x} \in \mathcal{N}_{\bm{x}^{\star}})$;
    \item
          \label{enum:definition:optimality:stationary}
          a {\em proximal stationary point } ({\rm P}-stationary point) of
          $\cost + \phi$
          if
          $\Psubdiff (\cost + \phi)(\bm{x}^{\star}) \ni \bm{0}$;
    \item
          a {\em Fr\'echet  stationary point} ({\rm F}-stationary point) of
          $\cost+\phi$
          if
          $\Fsubdiff (\cost + \phi)(\bm{x}^{\star}) \ni \bm{0}$;
    \item
          \label{enum:optimality:L_stationary}
          a {\em limiting stationary point} ({\rm L}-stationary point) of
          $\cost+\phi$
          if
          $\Lsubdiff (\cost + \phi)(\bm{x}^{\star}) \ni \bm{0}$.
  \end{enumerate}
\end{definition}

For nonconvex optimization, since finding a local minimizer is NP-hard in general (see, e.g.,~\cite{Jain-Kar17}), finding a point satisfying a certain necessary condition for the local optimality has been addressed as typical goals in nonconvex optimization literature, e.g.,~\cite{Bohm-Wright21,Liu-Xia24,Lopes-Peres-Bilches25,Kume-Yamada24A,Kume-Yamada25B,Attouch-Bolte-Svaiter13,Li-Lin15,Beck17,Themelis-Stella-Patrinos18,Kanzow-Mehlitz22,DeMarchi23,Jia-Kanwoz-Mehlitz23,Kanzow-Lehmann25,Yagishita-Ito25A,Yagishita-Ito25B,Davis-Drusvyatskiy19,Chen-Garcia-Shahrampour22,Pougkakiotis-Kalogerias23,Davis-Drusvyatskiy-Shi25}.
Fermat's rule (see, e.g.,~\cite[Cor. 2.7 (a) in Ch.
  2]{Clarke-Ledyaev-Stern-Wolenski98},~\cite[Thm. 10.1]{Rockafellar-Wets98}) with~\eqref{eq:subdifferential_relation}
implies that these stationarities are the most versatile necessary conditions for the local optimality as:
\begin{equation}
  \mathrm{local\ optimality} \Rightarrow {\rm P}\mathchar`-\mathrm{stationarity} \Rightarrow {\rm F}\mathchar`-\mathrm{stationarity}\Rightarrow {\rm L}\mathchar`-\mathrm{stationarity}. \label{eq:hierarchical_optimality}
\end{equation}

\section{Gradient mapping-type stationarity measure} \label{sec:stationarity}
We study relations between stationarities of
$\cost + \phi$
and the stationarity measure in
\eqref{eq:measure}:
\begin{equation}
  (\gamma \in \mathbb{R}_{++}) \quad
  \mathcal{M}_{\gamma}^{\cost,\phi}
  : \mathcal{X} \to [0,+\infty]
  :\widebar{\bm{x}}
  \mapsto
  \dist\left(\bm{0},\frac{\widebar{\bm{x}} - \prox{\gamma\phi}\left(\widebar{\bm{x}}-\gamma \Lsubdiff \cost\left(\widebar{\bm{x}}\right)\right)}{\gamma}\right). \label{eq:measure_limiting}
\end{equation}
To keep generality in Section~\ref{sec:stationarity}, we do not assume
Assumptions~\ref{assumption:regular} or~\ref{assumption:smooth} unless stated otherwise.
Since
$\phi$
is not assumed to be convex here (unlike~\cite{Kume-Yamada25B}), we cannot rely on standard facts in convex analysis, e.g.,
the single-valuedness and continuity of
$\prox{\gamma \phi}$
(see also Remark~\ref{remark:obstacle} for some obstacles in the absence of convexity of $\phi$).
Instead, in Section~\ref{sec:stationarity},
we leverage tools from nonsmooth and set-valued analyses;
\ref{sec:appendix:set_valued} summarizes basic facts on set-valued analysis that we will use.
\subsection{Gradient mapping-type stationarity measure, criticality and stationarity} \label{sec:measure}
We introduce {\em critical point} of
$\cost+\phi$,
which has previously been discussed in the case where
$\nabla \cost$
is globally Lipschitz continuous
(see, e.g.,~\cite{Beck-Hallak16},~\cite[Def. 3.1 (ii)]{Themelis-Stella-Patrinos18}).
A critical point was originally referred to as ``{\rm L}-stationary point''
in, e.g.,~\cite{Beck-Hallak16}, while this paper adopts ``critical point'' in order to avoid confusion with ``limiting stationary point''
in Definition~\ref{definition:optimality}~\ref{enum:optimality:L_stationary}.

\begin{definition}[Critical point]
  \label{definition:critical}
  For
  $\cost+\phi$
  satisfying Assumption~\ref{assumption:basic},
  $\widebar{\bm{x}} \in \dom{\phi}$
  is said to be a {\em critical point}
  of
  $\cost+\phi$
  if
  $\mathcal{M}_{\gamma_{\widebar{\bm{x}}}}^{\cost,\phi}(\widebar{\bm{x}}) = 0$
  holds for some
  $\gamma_{\widebar{\bm{x}}} \in (0,\gamma_{\phi})$.
\end{definition}

We collect basic properties of
$\mathcal{M}_{\gamma}^{\cost,\phi}$
to derive relationship with stationarities.

\begin{lemma}[Basic properties of $\mathcal{M}_{\gamma}^{\cost,\phi}$]
  \label{lemma:measure_optimality}
  Let
  $\cost$
  and
  $\phi$
  satisfy Assumption~\ref{assumption:basic}.
  \begin{enumerate}[label=(\alph*)]
    \item
          \label{enum:lemma:measure_optimality:existence}
          For
          $\gamma \in (0,\gamma_{\phi})$
          and
          $\widebar{\bm{x}} \in \mathrm{int}(\dom{\cost})$,
          there exist
          $\widebar{\bm{v}} \in \Lsubdiff \cost(\widebar{\bm{x}})$
          and
          $\widebar{\bm{p}} \in \prox{\gamma\phi}(\widebar{\bm{x}}-\gamma\widebar{\bm{v}})$
          such that
          $\mathcal{M}_{\gamma}^{\cost,\phi}(\widebar{\bm{x}}) = \norm{(\widebar{\bm{x}}-\widebar{\bm{p}})/\gamma}<+\infty$.
    \item
          \label{enum:lemma:measure_optimality:any_gamma}
          If
          $\widebar{\bm{x}} \in \dom{\phi}$
          is a critical point of
          $\cost+\phi$,
          i.e.,
          $\mathcal{M}_{\gamma_{\widebar{\bm{x}}}}^{\cost,\phi}(\widebar{\bm{x}}) = 0$
          holds for some
          $\gamma_{\widebar{\bm{x}}} \in (0,\gamma_{\phi})$,
          then
          $\mathcal{M}_{\gamma}^{\cost,\phi}(\widebar{\bm{x}}) = 0$
          holds
          for all
          $\gamma \in (0,\gamma_{\widebar{\bm{x}}}]$.
    \item
          \label{enum:lemma:measure_optimality:stationarity_p}
          If
          $\widebar{\bm{x}} \in \dom{\phi}$
          is a critical point of
          $\cost + \phi$,
          then
          $\Lsubdiff \cost(\widebar{\bm{x}}) + \Psubdiff \phi(\widebar{\bm{x}}) \ni \bm{0}$.
  \end{enumerate}
  Moreover, by further assuming Assumption~\ref{assumption:smooth} on
  $\cost$,
  the following hold:
  \begin{enumerate}[label=(\alph*')]
    \item
          \label{enum:lemma:measure_optimality:existence_smooth}
          If
          $\mathcal{M}_{\widehat{\gamma}}^{\cost,\phi}(\widebar{\bm{x}}) <+\infty$
          holds for some
          $\widebar{\bm{x}} \in \mathrm{int}(\dom{\cost})$
          and
          $\widehat{\gamma} \in \mathbb{R}_{++}$,
          then
          there exists
          $\widebar{\bm{p}} \in \prox{\widehat{\gamma}\phi}(\widebar{\bm{x}}-\widehat{\gamma}\nabla\cost(\widebar{\bm{x}}))$
          such that
          $\mathcal{M}_{\widehat{\gamma}}^{\cost,\phi}(\widebar{\bm{x}}) = \norm{(\widebar{\bm{x}}-\widebar{\bm{p}})/\widehat{\gamma}}$.
    \item
          \label{enum:lemma:measure_optimality:any_gamma_smooth}
          If
          $\mathcal{M}_{\widehat{\gamma}}^{\cost,\phi}(\widebar{\bm{x}}) = 0$
          holds for some
          $\widebar{\bm{x}} \in \dom{\phi}$
          and
          $\widehat{\gamma} \in \mathbb{R}_{++}$,
          then
          $\mathcal{M}_{\gamma}^{\cost,\phi}(\widebar{\bm{x}}) = 0$
          holds for all
          $\gamma \in (0,\widehat{\gamma}]$.
    \item
          \label{enum:lemma:measure_optimality:stationarity_p_smooth}
          For
          $\widebar{\bm{x}} \in \dom{\phi}$,
          $\mathcal{M}_{\widehat{\gamma}}^{\cost,\phi}(\widebar{\bm{x}}) = 0$
          holds
          for some
          $\widehat{\gamma} \in \mathbb{R}_{++}$
          if and only if
          $\nabla \cost(\widebar{\bm{x}}) + \Psubdiff \phi(\widebar{\bm{x}}) \ni \bm{0}$.
  \end{enumerate}
\end{lemma}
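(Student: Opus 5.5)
Part (a) reduces to a compactness argument. Since $\cost$ is locally Lipschitz at $\widebar{\bm{x}}\in\mathrm{int}(\dom{\cost})$, the set $\Lsubdiff\cost(\widebar{\bm{x}})$ is nonempty and compact. I would then consider the set-valued map $\bm{v}\mapsto \prox{\gamma\phi}(\widebar{\bm{x}}-\gamma\bm{v})$. By Fact~\ref{fact:prox_set_mapping}, for $\gamma\in(0,\gamma_{\phi})$ this map has nonempty values. Prox-boundedness makes it locally bounded and outer semicontinuous (the standard result \cite[Thm. 1.25]{Rockafellar-Wets98}). Its graph restricted to the compact set $\Lsubdiff\cost(\widebar{\bm{x}})$ is therefore compact: it is closed by outer semicontinuity, and bounded by local boundedness together with compactness of the domain. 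The continuous function $(\bm{v},\bm{p})\mapsto\norm{(\widebar{\bm{x}}-\bm{p})/\gamma}$ then attains its infimum on this set at some $(\widebar{\bm{v}},\widebar{\bm{p}})$, and this infimum is finite. Part (a') is handled the same way with $\Lsubdiff\cost(\widebar{\bm{x}})=\{\nabla\cost(\widebar{\bm{x}})\}$ (\cite[Exe. 8.8(b)]{Rockafellar-Wets98}). Because $\widehat{\gamma}$ may exceed $\gamma_{\phi}$, I would not rely on Fact~\ref{fact:prox_set_mapping}. Instead I would take a minimizing sequence $\bm{p}_{k}$; finiteness of the measure makes it bounded. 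Any cluster point stays in $\prox{\widehat{\gamma}\phi}(\cdot)$ because the prox of a proper lsc function at a fixed point has a closed argmin set, being a level set of an lsc function at its minimum value.

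For (b), (c), (b') and (c') the key step is a characterization of zero measure. $\mathcal{M}_{\gamma}^{\cost,\phi}(\widebar{\bm{x}})=0$ (with attainment) means $\widebar{\bm{x}}\in\prox{\gamma\phi}(\widebar{\bm{x}}-\gamma\bm{v})$ for some $\bm{v}\in\Lsubdiff\cost(\widebar{\bm{x}})$. Expanding the prox inequality, this is equivalent to
\begin{equation*}
  \phi(\bm{x})\geq\phi(\widebar{\bm{x}})+\inprod{-\bm{v}}{\bm{x}-\widebar{\bm{x}}}-\frac{1}{2\gamma}\norm{\bm{x}-\widebar{\bm{x}}}^{2}\quad(\forall\bm{x}\in\mathcal{X}),
\end{equation*}
i.e.\ a global proximal subgradient inequality for $-\bm{v}$.

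With this characterization, (b) and (b') are immediate: the inequality persists when $\gamma$ is decreased, since $\frac{1}{2\gamma}$ increases. Attainment at $\gamma_{\widebar{\bm{x}}}$ is supplied by (a) (respectively (a'), whose hypothesis of finiteness holds trivially). Part (c) also follows at once, since the inequality gives $-\bm{v}\in\Psubdiff\phi(\widebar{\bm{x}})$, so $\bm{0}\in\Lsubdiff\cost(\widebar{\bm{x}})+\Psubdiff\phi(\widebar{\bm{x}})$. The "only if" direction of (c') is the same argument with $\bm{v}=\nabla\cost(\widebar{\bm{x}})$.

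The main obstacle is the converse in (c'). Here $-\nabla\cost(\widebar{\bm{x}})\in\Psubdiff\phi(\widebar{\bm{x}})$ only gives the quadratic minorization locally, on some ball $B(\widebar{\bm{x}},\delta)$ with some constant $1/\gamma'$, and we need it globally for some small $\widehat{\gamma}$. Globalization uses prox-boundedness. Fix $\gamma_{0}\in(0,\gamma_{\phi})$ so that $\phi\geq -c-\frac{1}{2\gamma_{0}}\norm{\cdot}^{2}$. Outside the ball, $\norm{\bm{x}-\widebar{\bm{x}}}\geq\delta$, and the right-hand side of the desired inequality is at most $\phi(\widebar{\bm{x}})+\norm{\bm{v}}\norm{\bm{x}-\widebar{\bm{x}}}-\frac{1}{2\widehat{\gamma}}\norm{\bm{x}-\widebar{\bm{x}}}^{2}$. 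For $\widehat{\gamma}$ small enough this lies below $-c-\frac{1}{2\gamma_{0}}\norm{\bm{x}}^{2}$ for all such $\bm{x}$, because the quadratic coefficient dominates and the constants are absorbed using $\norm{\bm{x}-\widebar{\bm{x}}}\geq\delta$. Inside the ball we take $\widehat{\gamma}\leq\gamma'$. This yields $\widebar{\bm{x}}\in\prox{\widehat{\gamma}\phi}(\widebar{\bm{x}}-\widehat{\gamma}\nabla\cost(\widebar{\bm{x}}))$, hence $\mathcal{M}_{\widehat{\gamma}}^{\cost,\phi}(\widebar{\bm{x}})=0$.
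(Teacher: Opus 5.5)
Your proposal is correct and follows essentially the same route as the paper: compactness of $\Lsubdiff\cost(\widebar{\bm{x}})$ combined with local boundedness and outer semicontinuity of $\prox{\gamma\phi}$ for (a), closedness of the argmin of a lower semicontinuous function for (a'), and, for (b)--(c'), the equivalence of $\mathcal{M}_{\gamma}^{\cost,\phi}(\widebar{\bm{x}})=0$ with a global quadratic minorization of $\phi$ at $\widebar{\bm{x}}$ together with attainment from (a) or (a'). The only difference is that the paper phrases this minorization as membership in the $\gamma$-level proximal subdifferential and takes the globalization $\Psubdiff\phi(\widebar{\bm{x}})=\bigcup_{\gamma>0}\Psubdiff^{\gamma}\phi(\widebar{\bm{x}})$ from Wang--Wang, whereas you prove that step directly from prox-boundedness (your estimate outside $B(\widebar{\bm{x}},\delta)$ goes through, e.g.\ via $\norm{\bm{x}}^{2}\leq 2\norm{\bm{x}-\widebar{\bm{x}}}^{2}+2\norm{\widebar{\bm{x}}}^{2}$), so your argument is self-contained.
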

\begin{proof}
  For~\ref{enum:lemma:measure_optimality:existence} and~\ref{enum:lemma:measure_optimality:existence_smooth}, it is enough to show the nonemptiness and closedness of
  $\frac{\widebar{\bm{x}}-\prox{\gamma\phi}(\widebar{\bm{x}}-\gamma\Lsubdiff \cost(\widebar{\bm{x}}))}{\gamma}\subset \mathcal{X}$,
  in particular of
  $\prox{\gamma\phi}(\widebar{\bm{x}}-\gamma\Lsubdiff \cost(\widebar{\bm{x}}))$,
  because, for a nonempty closed set
  $\mathcal{E}\subset \mathcal{X}$,
  $\dist(\bm{0}, \mathcal{E}) = \norm{\bm{w}}$
  holds for some
  $\bm{w} \in \mathcal{E}$~\cite[Exm. 1.20]{Rockafellar-Wets98}.

  \ref{enum:lemma:measure_optimality:existence}
  By Fact~\ref{fact:subdifferential_nonempty} in~\ref{sec:appendix:set_valued},
  $\Lsubdiff \cost(\widebar{\bm{x}})\neq \emptyset$
  is compact,
  and thus
  $E\coloneqq \widebar{\bm{x}}-\gamma\Lsubdiff \cost(\widebar{\bm{x}}) \subset \mathcal{X}$
  is nonempty and compact.
  Then, the {\em local boundedness} of
  $\prox{\gamma \phi}$
  by Fact~\ref{fact:prox_set_mapping}~\ref{enum:fact:prox_set_mapping:locally_bounded} ensures the boundedness of
  $\prox{\gamma\phi}(E)$
  (see Definition~\ref{definition:bounded}~\ref{enum:definition:bounded:locally_bounded} for local boundedness).
  Since
  $E$
  is compact and
  $\prox{\gamma\phi}$
  is outer semicontinuous by Fact~\ref{fact:prox_set_mapping}~\ref{enum:fact:prox_set_mapping:locally_bounded},
  $\prox{\gamma\phi}(E)$
  is a closed subset of
  $\mathcal{X}$
  by the standard fact~\cite[Thm. 5.25 (a)]{Rockafellar-Wets98} that
  an outer semicontinuous mapping maps a compact set to a closed set.

  \ref{enum:lemma:measure_optimality:existence_smooth}
  Suppose that
  $\mathcal{M}_{\widehat{\gamma}}^{\cost,\phi}(\widebar{\bm{x}}) < +\infty$
  holds for some
  $\widebar{\bm{x}} \in \mathrm{int}(\dom{\cost})$
  and
  $\widehat{\gamma} \in \mathbb{R}_{++}$.
  Then,
  $\prox{\widehat{\gamma}\phi}(\widebar{\bm{x}}-\widehat{\gamma} \Lsubdiff \cost(\widebar{\bm{x}}))=\prox{\widehat{\gamma} \phi}(\widebar{\bm{x}} - \widehat{\gamma} \nabla \cost(\widebar{\bm{x}}))$
  is nonempty by~\eqref{eq:measure_limiting} with the relation for
  $\mathcal{E}\subset \mathcal{X}$:
  $\dist(\bm{0},\mathcal{E})<+\infty\Leftrightarrow \mathcal{E}\neq \emptyset$.
  Since
  $\phi$
  is lower semicontinuous and
  $\frac{1}{2\widehat{\gamma}}\norm{\cdot - (\widebar{\bm{x}}-\widehat{\gamma}\nabla\cost(\widebar{\bm{x}}))}^{2}$
  is continuous,
  $\phi + \frac{1}{2\widehat{\gamma}}\norm{\cdot - (\widebar{\bm{x}}-\widehat{\gamma}\nabla\cost(\widebar{\bm{x}}))}^{2}$
  is lower semicontinuous~\cite[Lemma 1.27]{Bauschke-Combettes17}.
  Hence,
  $\prox{\widehat{\gamma} \phi}(\widebar{\bm{x}} - \widehat{\gamma} \nabla \cost(\widebar{\bm{x}})) =
    \argmin_{\bm{x} \in \mathcal{X}} (\phi(\bm{x}) + \frac{1}{2\widehat{\gamma}}\|\bm{x}-(\widebar{\bm{x}}-\widehat{\gamma}\nabla \cost(\widebar{\bm{x}}))\|^{2})$
  is closed in
  $\mathcal{X}$.

  \ref{enum:lemma:measure_optimality:any_gamma}
  We have
  $\prox{\gamma\phi}(\widebar{\bm{x}}-\gamma\bm{v}) = \argmin_{\bm{x}\in \mathcal{X}} \left(\phi(\bm{x}) + \frac{1}{2\gamma}\norm{\bm{x}-\widebar{\bm{x}}+\gamma\bm{v}}^{2}\right) = \argmin_{\bm{x} \in \mathcal{X}} \left(\cost(\widebar{\bm{x}}) + \inprod{\bm{v}}{\bm{x}-\widebar{\bm{x}}} + \phi(\bm{x}) + \frac{1}{2\gamma}\norm{\bm{x}-\widebar{\bm{x}}}^{2}\right)$
  for
  $\widebar{\bm{x}} \in \dom{\phi}$,
  $\bm{v} \in \Lsubdiff \cost(\widebar{\bm{x}})$
  and
  $\gamma \in \mathbb{R}_{++}$.
  Together with
  Lemma~\ref{lemma:measure_optimality}~\ref{enum:lemma:measure_optimality:existence} and~\eqref{eq:measure_limiting}, we have the relation for each fixed
  $\gamma \in (0,\gamma_{\phi})$:
  \begin{align}
     &
    \mathcal{M}_{\gamma}^{\cost, \phi}(\widebar{\bm{x}}) = 0
    \quad \Leftrightarrow
    (\exists \bm{v}_{\gamma} \in \Lsubdiff \cost (\widebar{\bm{x}})) \quad
    \widebar{\bm{x}} \in \prox{\gamma\phi}(\widebar{\bm{x}}-\gamma\bm{v}_{\gamma})
    \label{eq:relation:stationarity_gamma}
    \\
     & \Leftrightarrow (\exists \bm{v}_{\gamma} \in \Lsubdiff \cost (\widebar{\bm{x}}), \forall \bm{x} \in \mathcal{X})
    \\
     & \quad \quad \quad
    \cost(\widebar{\bm{x}}) + \phi(\widebar{\bm{x}}) \leq \cost(\widebar{\bm{x}}) + \inprod{\bm{v}_{\gamma}}{\bm{x}-\widebar{\bm{x}}} + \phi(\bm{x}) + \frac{1}{2\gamma}\norm{\bm{x}-\widebar{\bm{x}}}^{2}.
    \label{eq:relation:stationarity_gamma_inequality}
  \end{align}

  Suppose
  $\mathcal{M}_{\gamma_{\widebar{\bm{x}}}}^{\cost,\phi}(\widebar{\bm{x}}) = 0$
  holds with some
  $\gamma_{\widebar{\bm{x}}} \in (0,\gamma_{\phi})$.
  By the relation between~\eqref{eq:relation:stationarity_gamma} and~\eqref{eq:relation:stationarity_gamma_inequality}, there exists
  $\bm{v}_{\gamma_{\widebar{\bm{x}}}} \in \Lsubdiff \cost(\widebar{\bm{x}})$
  such that
  $\cost(\widebar{\bm{x}}) + \phi(\widebar{\bm{x}}) \leq \cost(\widebar{\bm{x}}) + \inprod{\bm{v}_{\gamma_{\widebar{\bm{x}}}}}{\bm{x}-\widebar{\bm{x}}} + \phi(\bm{x}) + \frac{1}{2\gamma_{\widebar{\bm{x}}}}\norm{\bm{x}-\widebar{\bm{x}}}^{2}$
  for all
  $\bm{x} \in \mathcal{X}$.
  For every
  $\gamma \in (0,\gamma_{\widebar{\bm{x}}}]$,
  we have
  $\cost(\widebar{\bm{x}}) + \phi(\widebar{\bm{x}}) \leq \cost(\widebar{\bm{x}}) + \inprod{\bm{v}_{\gamma_{\widebar{\bm{x}}}}}{\bm{x}-\widebar{\bm{x}}} + \phi(\bm{x}) + \frac{1}{2\gamma}\norm{\bm{x}-\widebar{\bm{x}}}^{2}\ (\bm{x}\in\mathcal{X})$
  by
  $\frac{1}{2\gamma_{\widebar{\bm{x}}}}\norm{\bm{x}-\widebar{\bm{x}}}^{2} \leq \frac{1}{2\gamma}\norm{\bm{x}-\widebar{\bm{x}}}^{2}$.
  Since
  the condition in~\eqref{eq:relation:stationarity_gamma_inequality} holds
  for every
  $\gamma \in (0,\gamma_{\widebar{\bm{x}}}]$,
  we get
  $\mathcal{M}_{\gamma}^{\cost,\phi}(\widebar{\bm{x}}) = 0$
  by~\eqref{eq:relation:stationarity_gamma}.

  \ref{enum:lemma:measure_optimality:any_gamma_smooth}
  Under Assumption~\ref{assumption:smooth},
  Lemma~\ref{lemma:measure_optimality}~\ref{enum:lemma:measure_optimality:existence_smooth} ensures the relation, analogous to~\eqref{eq:relation:stationarity_gamma} (but beyond
  $\gamma \in (0,\gamma_{\phi})$),
  for each fixed
  $\gamma\in\mathbb{R}_{++}$:
  $\mathcal{M}_{\gamma}^{\cost,\phi}(\widebar{\bm{x}}) = 0$
  $\Leftrightarrow$
  $\widebar{\bm{x}} \in \prox{\gamma\phi}(\widebar{\bm{x}} - \gamma\nabla \cost(\widebar{\bm{x}}))$
  $\Leftrightarrow$
  $(\forall \bm{x} \in \mathcal{X})\ \cost(\widebar{\bm{x}}) + \phi(\widebar{\bm{x}}) \leq \cost(\widebar{\bm{x}}) + \inprod{\nabla \cost(\widebar{\bm{x}})}{\bm{x}-\widebar{\bm{x}}} + \phi(\bm{x}) + \frac{1}{2\gamma}\norm{\bm{x}-\widebar{\bm{x}}}^{2}$.
  Hence, the essentially same discussion after~\eqref{eq:relation:stationarity_gamma_inequality} yields~\ref{enum:lemma:measure_optimality:any_gamma_smooth}.

  \ref{enum:lemma:measure_optimality:stationarity_p}
  In this proof, we use the {\em $\gamma(>0)$-level proximal subdifferential}~\cite{Rockafellar21,Wang-Wang24},
  defined by
  $\Psubdiff^{\gamma}\phi(\widebar{\bm{x}}) \coloneqq \{\bm{v} \in \mathcal{X} \mid \phi(\bm{x}) \geq \phi(\widebar{\bm{x}}) + \inprod{\bm{v}}{\bm{x}-\widebar{\bm{x}}} - \frac{1}{2\gamma}\norm{\bm{x}-\widebar{\bm{x}}}^{2}\ (\forall \bm{x} \in \mathcal{X}) \}$
  for
  $\widebar{\bm{x}} \in \mathcal{X}$.
  For
  $\gamma > 0$
  and for
  $\widebar{\bm{x}}, \widebar{\bm{u}} \in \mathcal{X}$,
  we have the relation~\cite[Prop. 3.4]{Wang-Wang24}:
  $\widebar{\bm{u}} \in \Psubdiff^{\gamma} \phi(\widebar{\bm{x}})$
  if and only if
  $\widebar{\bm{x}} \in \prox{\gamma\phi}(\widebar{\bm{x}} + \gamma\widebar{\bm{u}})$.
  Together with Lemma~\ref{lemma:measure_optimality}~\ref{enum:lemma:measure_optimality:existence}, the proof is completed by
  \begin{align}
     & (\exists \gamma_{\widebar{\bm{x}}}\in (0,\gamma_{\phi})) \ \mathcal{M}_{\gamma_{\widebar{\bm{x}}}}^{\cost,\phi}(\widebar{\bm{x}}) = 0
    \
    \Leftrightarrow
    (\exists \gamma_{\widebar{\bm{x}}}\in (0,\gamma_{\phi}),\exists \bm{v}_{\gamma_{\widebar{\bm{x}}}} \in \Lsubdiff \cost(\widebar{\bm{x}}))  \ \widebar{\bm{x}} \in \prox{\gamma_{\widebar{\bm{x}}}\phi}(\widebar{\bm{x}} - \gamma_{\widebar{\bm{x}}}\bm{v}_{\gamma_{\widebar{\bm{x}}}}) \hspace{-2em} \\
     & \Leftrightarrow
    (\exists \gamma_{\widebar{\bm{x}}}\in (0,\gamma_{\phi}),\exists \bm{v}_{\gamma_{\widebar{\bm{x}}}} \in \Lsubdiff \cost(\widebar{\bm{x}})) \quad
    -\bm{v}_{\gamma_{\widebar{\bm{x}}}} \in \Psubdiff^{\gamma_{\widebar{\bm{x}}}} \phi (\widebar{\bm{x}}) \\
     & \Leftrightarrow (\exists \gamma_{\widebar{\bm{x}}}\in (0,\gamma_{\phi}))\quad \bm{0}\in \Lsubdiff \cost(\widebar{\bm{x}}) + \Psubdiff^{\gamma_{\widebar{\bm{x}}}} \phi(\widebar{\bm{x}})
    \quad \Rightarrow  \bm{0}\in \Lsubdiff \cost(\widebar{\bm{x}}) + \Psubdiff \phi(\widebar{\bm{x}}), \label{eq:relation:criticality_stationarity}
  \end{align}
  where the last implication follows from
  $\Psubdiff \phi(\widebar{\bm{x}})=\bigcup_{\gamma\in\mathbb{R}_{++}}\Psubdiff^{\gamma}\phi(\widebar{\bm{x}})$~\cite[Prop. 3.6 (ii)]{Wang-Wang24}.

  \ref{enum:lemma:measure_optimality:stationarity_p_smooth}
  Under Assumption~\ref{assumption:smooth},
  a similar discussion in~\eqref{eq:relation:criticality_stationarity} together with Lemma~\ref{lemma:measure_optimality}~\ref{enum:lemma:measure_optimality:existence_smooth} yields
  the relation:
  $ (\exists \widehat{\gamma}\in \mathbb{R}_{++}) \ \mathcal{M}_{\widehat{\gamma}}^{\cost,\phi}(\widebar{\bm{x}}) = 0 \Leftrightarrow
    (\exists \widehat{\gamma}\in \mathbb{R}_{++}) \ \widebar{\bm{x}} \in \prox{\widehat{\gamma}\phi}(\widebar{\bm{x}} - \widehat{\gamma}\nabla \cost(\widebar{\bm{x}}))
    \Leftrightarrow (\exists \widehat{\gamma}\in \mathbb{R_{++}})\ -\nabla \cost(\widebar{\bm{x}}) \in \Psubdiff^{\widehat{\gamma}} \phi(\widebar{\bm{x}})$.
  The last condition is equivalent to
  $- \nabla \cost(\widebar{\bm{x}}) \in \Psubdiff \phi(\widebar{\bm{x}})$
  by
  $\Psubdiff \phi(\widebar{\bm{x}})=\bigcup_{\gamma\in\mathbb{R}_{++}}\Psubdiff^{\gamma}\phi(\widebar{\bm{x}})$~\cite[Prop. 3.6 (ii)]{Wang-Wang24}.
\end{proof}

Lemma~\ref{lemma:measure_optimality}~\ref{enum:lemma:measure_optimality:stationarity_p} indicates that the criticality of
$\cost + \phi$
may depend on the decomposition of the cost function
$\cost + \phi$
into
$\cost$
and
$\phi$
under Assumption~\ref{assumption:basic} alone
(see also Remark~\ref{remark:measure}).
Under Assumption~\ref{assumption:regular} (resp.~\ref{assumption:smooth}),
Proposition~\ref{proposition:criticality_stationarity} shows that the criticality of
$\cost + \phi$
is equivalent to the
  {\rm F}-stationarity (resp. {\rm P}-stationarity) of
$\cost + \phi$.
Since stationarities of
$\cost + \phi$
depend on the cost function
$\cost + \phi$
itself,
the criticality of
$\cost + \phi$
is independent of the chosen pair
$(\cost,\phi)$
within the class of decompositions satisfying Assumption~\ref{assumption:regular} (resp.~\ref{assumption:smooth}).
Proposition~\ref{proposition:criticality_stationarity} together with~\eqref{eq:hierarchical_optimality} implies that
the criticality of
$\cost + \phi$
is a necessary condition for the local optimality of
$\cost+\phi$
under Assumptions~\ref{assumption:regular} or~\ref{assumption:smooth}.

\begin{proposition}[Relation between criticality and stationarities]
  \label{proposition:criticality_stationarity}
  Let
  $\cost$
  and
  $\phi$
  satisfy Assumption~\ref{assumption:basic}.
  For
  $\bm{x}^{\star} \in \dom{\phi}$,
  consider the following conditions:
  \begin{enumerate}[label=(\roman*)]
    \item
          \label{enum:proposition:criticality_stationarity:criticality}
          $\bm{x}^{\star}$
          is a critical point of
          $\cost+\phi$,
          i.e.,
          $\mathcal{M}_{\gamma_{\bm{x}^{\star}}}^{\cost,\phi}(\bm{x}^{\star})=0$
          holds with some
          $\gamma_{\bm{x}^{\star}} \in (0,\gamma_{\phi})$.
    \item
          \label{enum:proposition:criticality_stationarity:criticality_bound}
          $\mathcal{M}_{\gamma}^{\cost,\phi}(\bm{x}^{\star}) = 0$
          holds for all
          $\gamma \in (0,\gamma_{\bm{x}^{\star}}]$
          with some
          $\gamma_{\bm{x}^{\star}} \in (0,\gamma_{\phi})$.
  \end{enumerate}
  Then, for
  $\bm{x}^{\star} \in \dom{\phi}$,
  the following relations hold:
  \begin{enumerate}[label=(\alph*)]
    \item
          \label{enum:proposition:criticality_stationarity:regular}
          Under Assumption~\ref{assumption:regular},
          $\bm{x}^{\star}$
          is an {\rm F}-stationary point of
          $\cost+\phi$
          $\Leftrightarrow$
          \ref{enum:proposition:criticality_stationarity:criticality}
          $\Leftrightarrow$ \ref{enum:proposition:criticality_stationarity:criticality_bound}.
    \item
          \label{enum:proposition:criticality_stationarity:smooth}
          Under Assumption~\ref{assumption:smooth},
          $\bm{x}^{\star}$
          is a {\rm P}-stationary point of
          $\cost+\phi$
          $\Leftrightarrow$
          \ref{enum:proposition:criticality_stationarity:criticality}
          $\Leftrightarrow$ \ref{enum:proposition:criticality_stationarity:criticality_bound}.
  \end{enumerate}
\end{proposition}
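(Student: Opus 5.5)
The equivalence (i) $\Leftrightarrow$ (ii) holds in both settings without using Assumption~\ref{assumption:regular} or~\ref{assumption:smooth}. Indeed, (ii) $\Rightarrow$ (i) is trivial: take $\gamma=\gamma_{\bm{x}^{\star}}$. Conversely, (i) $\Rightarrow$ (ii) is exactly Lemma~\ref{lemma:measure_optimality}~\ref{enum:lemma:measure_optimality:any_gamma}. So in both (a) and (b) it remains to relate criticality to the corresponding stationarity.

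For (a), the direction (i) $\Rightarrow$ F-stationarity is immediate. Lemma~\ref{lemma:measure_optimality}~\ref{enum:lemma:measure_optimality:stationarity_p} gives $\bm{0}\in\Lsubdiff\cost(\bm{x}^{\star})+\Psubdiff\phi(\bm{x}^{\star})$, and since $\bm{x}^{\star}\in\dom{\phi}\subset\mathrm{int}(\dom{\cost})$, the sum rule~\eqref{eq:sum_subdifferential} identifies this set with $\Fsubdiff(\cost+\phi)(\bm{x}^{\star})$.

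The converse is the main obstacle. Given F-stationarity, \eqref{eq:sum_subdifferential} yields $\bm{v}\in\Lsubdiff\cost(\bm{x}^{\star})$ with $-\bm{v}\in\Psubdiff\phi(\bm{x}^{\star})$. By~\eqref{eq:relation:criticality_stationarity}, it suffices to find $\gamma\in(0,\gamma_{\phi})$ with $-\bm{v}\in\Psubdiff^{\gamma}\phi(\bm{x}^{\star})$, i.e., to globalize the local proximal inequality
\[
\phi(\bm{x})\geq\phi(\bm{x}^{\star})-\inprod{\bm{v}}{\bm{x}-\bm{x}^{\star}}-\tfrac{1}{2\gamma_{0}}\norm{\bm{x}-\bm{x}^{\star}}^{2}\quad(\bm{x}\in B(\bm{x}^{\star},\delta)),
\]
supplied by the definition of $\Psubdiff$ (or by prox-regularity~\eqref{eq:prox_regular} with $\bm{x}=\bm{x}^{\star}$). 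I would use prox-boundedness. Fix $\gamma'\in(0,\gamma_{\phi})$, so that $\phi\geq -c-\frac{1}{2\gamma'}\norm{\cdot}^{2}$ for some $c\in\mathbb{R}$. Then for $\norm{\bm{x}-\bm{x}^{\star}}\geq\delta$, the right-hand side $\phi(\bm{x}^{\star})-\inprod{\bm{v}}{\bm{x}-\bm{x}^{\star}}-\frac{1}{2\gamma}\norm{\bm{x}-\bm{x}^{\star}}^{2}$ is dominated by this quadratic lower bound once $\gamma$ is small. This works because the coefficient $\frac{1}{2\gamma}-\frac{1}{2\gamma'}(1+\epsilon)$ of $\norm{\bm{x}-\bm{x}^{\star}}^{2}$ becomes large, and on $\norm{\bm{x}-\bm{x}^{\star}}\ge\delta$ it absorbs the linear and constant terms. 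This is the standard argument that a proximal subgradient of a prox-bounded function is a $\gamma$-level proximal subgradient for small $\gamma$ (cf.~\cite[Prop.~8.46]{Rockafellar-Wets98}). Taking $\gamma\le\gamma_{0}$ also preserves the inequality inside the ball, and we may additionally take $\gamma<\gamma_{\phi}$. Hence $\bm{x}^{\star}\in\prox{\gamma\phi}(\bm{x}^{\star}-\gamma\bm{v})$, and $\mathcal{M}^{\cost,\phi}_{\gamma}(\bm{x}^{\star})=0$ by~\eqref{eq:relation:stationarity_gamma}.

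For (b), \eqref{eq:sum_subdifferential_smooth} gives $\Psubdiff(\cost+\phi)(\bm{x}^{\star})=\nabla\cost(\bm{x}^{\star})+\Psubdiff\phi(\bm{x}^{\star})$. Thus P-stationarity is equivalent, by Lemma~\ref{lemma:measure_optimality}~\ref{enum:lemma:measure_optimality:stationarity_p_smooth}, to $\mathcal{M}_{\widehat\gamma}^{\cost,\phi}(\bm{x}^{\star})=0$ for some $\widehat\gamma\in\mathbb{R}_{++}$. To land in $(0,\gamma_{\phi})$, I apply Lemma~\ref{lemma:measure_optimality}~\ref{enum:lemma:measure_optimality:any_gamma_smooth} to pass to any smaller $\gamma\in(0,\min\{\widehat\gamma,\gamma_{\phi}\})$, which gives (i). Conversely, (i) is a special case of ``some $\widehat\gamma>0$'', so the same lemma yields P-stationarity. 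Only the globalization step in (a) requires real work.
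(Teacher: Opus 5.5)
Your proof is correct, and apart from one step it follows the paper. The equivalence (i)$\Leftrightarrow$(ii) via Lemma~\ref{lemma:measure_optimality}~\ref{enum:lemma:measure_optimality:any_gamma} is the paper's argument. So is criticality $\Rightarrow$ F-stationarity via Lemma~\ref{lemma:measure_optimality}~\ref{enum:lemma:measure_optimality:stationarity_p} and \eqref{eq:sum_subdifferential}. Part (b) via \eqref{eq:sum_subdifferential_smooth} and Lemma~\ref{lemma:measure_optimality}~\ref{enum:lemma:measure_optimality:any_gamma_smooth}--\ref{enum:lemma:measure_optimality:stationarity_p_smooth} also matches.

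The genuine difference is F-stationarity $\Rightarrow$ criticality in (a). The paper takes a limiting subgradient of $\phi$ at $\bm{x}^{\star}$ whose negative lies in $\Lsubdiff\cost(\bm{x}^{\star})$. It then invokes Fact~\ref{fact:prox_set_mapping}~\ref{enum:fact:prox_set_mapping:prox_single} (Poliquin--Rockafellar, Hare--Poliquin). That is, it uses the full prox-regularity of $\phi$ at $\bm{x}^{\star}$ for that subgradient to conclude that $\bm{x}^{\star}$ is the \emph{unique} point of the relevant proximal set for all small $\gamma$.

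You use prox-regularity only through the pointwise identity $\Lsubdiff\phi(\bm{x}^{\star})=\Psubdiff\phi(\bm{x}^{\star})$ encoded in \eqref{eq:sum_subdifferential}. You then globalize the local proximal inequality with prox-boundedness to get the mere membership $\bm{x}^{\star}\in\prox{\gamma\phi}(\bm{x}^{\star}-\gamma\bm{v})$, which is all that criticality asks for. This route has three advantages:
\begin{itemize}
\item it is elementary and self-contained;
\item it shows the implication survives whenever the relevant limiting subgradient of $\phi$ at $\bm{x}^{\star}$ is a proximal one;
\item it treats (a) and (b) uniformly, since the same globalization (the identity $\Psubdiff\phi=\bigcup_{\gamma>0}\Psubdiff^{\gamma}\phi$ for prox-bounded $\phi$) is what the paper itself relies on in Lemma~\ref{lemma:measure_optimality}~\ref{enum:lemma:measure_optimality:stationarity_p_smooth}.
\end{itemize}

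The paper's route is a one-line citation and yields more than is needed here: uniqueness, plus local single-valuedness and Lipschitz continuity of $\prox{\gamma\phi}$. That stronger conclusion is the ingredient the paper reuses in the proof of Theorem~\ref{theorem:characterization_stationarity_asymptotic}, so citing the Fact is economical for the paper as a whole.
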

\begin{proof}
  The relation \ref{enum:proposition:criticality_stationarity:criticality}
  $\Leftrightarrow$ \ref{enum:proposition:criticality_stationarity:criticality_bound} can be verified by Definition~\ref{definition:critical} and Lemma~\ref{lemma:measure_optimality}~\ref{enum:lemma:measure_optimality:any_gamma}.

  \ref{enum:proposition:criticality_stationarity:regular}
  We show the relation under Assumption~\ref{assumption:regular}:
  ``$\bm{x}^{\star}$
  is an {\rm F}-stationary point of
  $\cost + \phi$
  $\Leftrightarrow$
  $\ref{enum:proposition:criticality_stationarity:criticality}$,''
  where the implication
  $(\Leftarrow)$
  is verified by Lemma~\ref{lemma:measure_optimality}~\ref{enum:lemma:measure_optimality:stationarity_p} with~\eqref{eq:sum_subdifferential}.
  For
  $(\Rightarrow)$,
  assume that
  $\bm{x}^{\star} \in \dom{\phi}$
  is an F-stationary point of
  $\cost+\phi$,
  i.e.,
  $\bm{0}\in \Lsubdiff \cost(\bm{x}^{\star}) + \Lsubdiff \phi(\bm{x}^{\star})$
  by~\eqref{eq:sum_subdifferential}.
  There exists
  $\bm{v} \in \Lsubdiff \phi(\bm{x}^{\star})$
  such that
  $- \bm{v} \in \Lsubdiff \cost(\bm{x}^{\star})$.
  The prox-regularity of
  $\phi$
  at
  $\bm{x}^{\star}$
  for
  $\bm{v}$
  yields
  $\prox{\gamma_{\bm{x}^{\star}}\phi}(\bm{x}^{\star} + \gamma_{\bm{x}^{\star}}\bm{v}) = \{\bm{x}^{\star}\}$
  with some
  $\gamma_{\bm{x}^{\star}} \in (0,\gamma_{\phi})$
  by Fact~\ref{fact:prox_set_mapping}~\ref{enum:fact:prox_set_mapping:prox_single}.
  Hence,
  by~\eqref{eq:measure_limiting},
  $0 \leq \mathcal{M}_{\gamma_{\bm{x}^{\star}}}^{\cost,\phi}(\bm{x}^{\star}) \leq \norm{(\bm{x}^{\star}-\prox{\gamma_{\bm{x}^{\star}}\phi}(\bm{x}^{\star} + \gamma_{\bm{x}^{\star}}\bm{v}))/\gamma_{\bm{x}^{\star}}} = 0$.

  \ref{enum:proposition:criticality_stationarity:smooth}
  The relation
  ``$\bm{x}^{\star}$
  is a {\rm P}-stationary point of
  $\cost + \phi$
  $\Leftrightarrow$
  \ref{enum:proposition:criticality_stationarity:criticality_bound}''
  can be verified under Assumption~\ref{assumption:smooth} by:
  $\bm{x}^{\star}$
  is a {\rm P}-stationary point of
  $\cost + \phi$
  $\overset{\eqref{eq:sum_subdifferential_smooth}}{\Leftrightarrow}$
  $\bm{0} \in \nabla \cost(\bm{x}^{\star}) + \Psubdiff \phi(\bm{x}^{\star})$
  $\overset{\rm Lemma~\ref{lemma:measure_optimality}~\ref{enum:lemma:measure_optimality:stationarity_p_smooth}}{\Leftrightarrow}$
  $(\exists \widehat{\gamma} \in \mathbb{R}_{++})\ \mathcal{M}_{\widehat{\gamma}}^{\cost,\phi}(\bm{x}^{\star})=0$
  $\overset{\rm Lemma~\ref{lemma:measure_optimality}~\ref{enum:lemma:measure_optimality:any_gamma_smooth}}{\Leftrightarrow}$
  $(\exists \widehat{\gamma} \in \mathbb{R}_{++},\ \forall \gamma \in (0, \widehat{\gamma}])\ \mathcal{M}_{\gamma}^{\cost,\phi}(\bm{x}^{\star})=0$
  $\Leftrightarrow$
  \ref{enum:proposition:criticality_stationarity:criticality_bound}
  $(\exists \gamma_{\bm{x}^{\star}} \in (0,\gamma_{\phi}),\ \forall \gamma \in (0,\gamma_{\bm{x}^{\star}}])\ \mathcal{M}_{\gamma}^{\cost,\phi}(\bm{x}^{\star})=0$.

\end{proof}

\begin{remark}[``(Local) optimality $\Rightarrow$ criticality'' may not hold without regularities]
  \label{remark:measure}
  Consider the cost function
  $J(x)\coloneqq \abs{x}$
  with the decomposition of
  $J=\cost + \phi$
  into
  $\cost(x)\coloneqq 2\abs{x}$
  and
  $\phi(x)\coloneqq -\abs{x}$,
  where
  $\phi$
  is not prox-regular at
  $0$
  (see the paragraph just after Definition~\ref{definition:regular});
  hence Assumption~\ref{assumption:regular} is not satisfied.
  Clearly,
  $x^{\star}=0$
  is the unique (local) minimizer of
  $J=\cost+\phi$.
  However,
  since
  $\Lsubdiff \cost(0) + \Psubdiff \phi(0) = \emptyset$
  holds due to
  $\Psubdiff \phi(0) = \emptyset$,
  the contraposition of Lemma~\ref{lemma:measure_optimality}~\ref{enum:lemma:measure_optimality:stationarity_p} implies that
  $x^{\star}$
  is not a critical point of
  $\cost + \phi$.
  In contrast, consider a trivial decomposition of
  $J=\widetilde{\cost}+\widetilde{\phi}$
  into
  $\widetilde{\cost}(x) \coloneqq \abs{x}$
  and
  $\widetilde{\phi} \equiv 0$.
  Then,
  $\widetilde{\cost}$
  and
  $\widetilde{\phi}$
  satisfy Assumption~\ref{assumption:regular},
  and
  $0$
  is a critical point of
  $\widetilde{\cost}+\widetilde{\phi}$
  because
  $\mathcal{M}_{\gamma}^{\widetilde{\cost},\widetilde{\phi}}(0) = \dist(0, \Lsubdiff \widetilde{\cost}(0)) = \dist(0, [-1,1]) = 0\ (\forall \gamma \in \mathbb{R}_{++})$.
  This example indicates that, under Assumption~\ref{assumption:basic} only,
  (i)
  the criticality of
  $\cost+\phi$
  at
  $\bm{x}^{\star} \in \dom{\phi}$
  may not be a necessary condition for the (local) optimality of
  $\cost+\phi$
  at
  $\bm{x}^{\star}$;
  and
  (ii)
  the criticality may depend on the decomposition of
  $J=\cost+\phi$
  into
  $\cost$
  and
  $\phi$.
\end{remark}

\subsection{Asymptotic analysis of $\mathcal{M}_{\gamma}^{\cost,\phi}$} \label{sec:stationarity:asymptotic}
By Proposition~\ref{proposition:criticality_stationarity}
under Assumption~\ref{assumption:regular} or~\ref{assumption:smooth},
a stationary point of
$\cost+\phi$
can be characterized as its critical point
$\widebar{\bm{x}}\in\dom{\phi}$
where
$\mathcal{M}_{\gamma}^{\cost,\phi}(\widebar{\bm{x}})=0$
holds with a sufficiently small
$\widebar{\gamma} \in (0,\gamma_{\widebar{\bm{x}}}]$
and with some
$\gamma_{\widebar{\bm{x}}} \in (0,\gamma_{\phi})$.
However, it is not trivial to choose
$\widebar{\gamma}$
properly because its upper bound
$\gamma_{\widebar{\bm{x}}}$
depends on each critical point
$\widebar{\bm{x}}$
of
$\cost + \phi$.
Moreover, from a viewpoint of designing and analyzing iterative algorithms that produce a sequence
$(\bm{x}_{n})_{n=1}^{\infty} \subset \mathcal{X}$,
a characterization of stationarity at a fixed
$\widebar{\bm{x}}$
through the condition
$\mathcal{M}_{\widebar{\gamma}}^{\cost,\phi}(\widebar{\bm{x}})=0$
is not enough.
These situations motivate us to investigate relations between stationary points and asymptotic behaviors of
$\mathcal{M}_{\gamma_{n}}^{\cost,\phi}(\bm{x}_{n})$
with sequences
$(\gamma_{n})_{n=1}^{\infty} \subset (0,\gamma_{\phi})$
and
$(\bm{x}_{n})_{n=1}^{\infty} \subset \mathcal{X}$
converging to
$\widebar{\bm{x}}$.

The goal of this subsection is to show a key result (Theorem~\ref{theorem:asymptotic_approximation})
for characterizing stationary points of
$\cost+\phi$
via asymptotic behaviors of
$\mathcal{M}_{\gamma_{n}}^{\cost,\phi}$
(see Section~\ref{sec:measure:stationarity}).
Moreover, we also show asymptotic properties of a variant
$\mathcal{M}_{\gamma_{n}}^{\cost^{\langle \mu_{n} \rangle},\phi}$
of
$\mathcal{M}_{\gamma_{n}}^{\cost,\phi}$
with
a {\em smoothing function}
$\cost^{\langle\mu\rangle}\ (\mu > 0)$
of a nonsmooth function
$\cost$,
where
$\mu_{n} \searrow 0$.
A smoothing function, e.g.,~\cite{Chen12}, has been utilized for minimization of a nonsmooth function~\cite{Zhang-Chen09,Chen12,Burke-Hoheisel-Kanzow13,Bot-Hendrich15,Burke-Hoheisel17,Bohm-Wright21,Liu-Xia24,Bian-Chen20,Yu-Zhang22,Nishioka-Kanno24,Kume-Yamada24A,Kume-Yamada25B,Long-Zeng-Li-Dao-Peng26}.
In what follows, Section~\ref{sec:stationarity:asymptotic:smoothing} introduces a smoothing function and its useful property for our analysis, and then Section~\ref{sec:stationarity:asymptotic:theorem} shows asymptotic properties of the gradient mapping-type stationarity measure.

\subsubsection{Smoothing function and boundedness of its gradient sequence} \label{sec:stationarity:asymptotic:smoothing}
We introduce a smoothing function of
$\cost$,
and present its boundedness property of a gradient sequence that we will use in the proof of Theorem~\ref{theorem:asymptotic_approximation} in Section~\ref{sec:stationarity:asymptotic:theorem}.

\begin{definition}[Smoothing function]
  \label{definition:smoothing}
  Let
  $\cost:\mathcal{X}\to\exR$
  satisfy Assumption~\ref{assumption:basic}~\ref{enum:problem:origin:cost}.
  Then, a parameterized function
  $\{\cost^{\langle \mu \rangle}:\mathcal{X}\to \exR \mid \mu \in (0,\widetilde{\mu})\}$
  with some
  $\widetilde{\mu} \in \exRp$
  is said to be a {\em smoothing function} of
  $\cost$
  if
  $\mathrm{int}(\dom{\cost^{\langle \mu \rangle}}) = \mathrm{int}(\dom{\cost})\ (\mu\in (0,\widetilde{\mu}))$
  and
  the following five conditions hold
  (Note: we also use a simpler notation
  $\{\cost^{\langle \mu \rangle}\}_{\mu \in (0,\widetilde{\mu})}$
  with
  $\widetilde{\mu} \in \exRp$
  for a smoothing function of
  $\cost$):
  \begin{enumerate}[label=(\alph*)]
    \item
          (Pointwise convergence) \label{enum:consistency:pointwise}
          $\lim\limits_{(0,\widetilde{\mu})\ni \mu\to 0}\cost^{\langle \mu \rangle}(\widebar{\bm{x}}) = \cost(\widebar{\bm{{x}}})$
          holds for all
          $\widebar{\bm{x}} \in \mathrm{int}(\dom{\cost})$.
    \item
          (Uniform bound) \label{enum:consistency:uniform}
          There exists
          $\kappa \in \mathbb{R}_{++}$
          such that\footnote{
            Instead of the condition~\ref{enum:consistency:uniform} in Definition~\ref{definition:smoothing},
            in~\cite{Bian-Chen20,Yu-Zhang22},
            its weaker condition
            $(\exists \kappa \in \mathbb{R}_{++},\ \forall \mu,\mu' \in (0,\widetilde{\mu})\ \mathrm{s.t.
              }\ \mu' \leq \mu,\ \forall \widebar{\bm{x}} \in \mathrm{int}(\dom{\cost}))\  \abs{\cost^{\langle \mu' \rangle }(\widebar{\bm{x}}) - \cost^{\langle \mu \rangle}(\widebar{\bm{x}})} \leq \kappa (\mu-\mu')$
            is imposed on
            $\{\cost^{\langle \mu \rangle}\}_{\mu \in (0,\widetilde{\mu})}$
            together with
            the conditions~\ref{enum:consistency:pointwise} and~\ref{enum:consistency:gradient_Lipschitz}-\ref{enum:consistency:consistency} in Definition~\ref{definition:smoothing}.
            Fortunately, even for
            $\{\cost^{\langle \mu \rangle}\}_{\mu \in (0,\widetilde{\mu})}$
            satisfying this weaker condition,
            its modification
            $\widehat{\cost}^{\langle \mu \rangle} \coloneqq \cost^{\langle \mu \rangle} - \kappa \mu\ (\mu \in (0,\widetilde{\mu}))$
            achieves
            $0 \leq \widehat{\cost}^{\langle \mu' \rangle}(\bm{x}) - \widehat{\cost}^{\langle \mu \rangle}(\bm{x}) \leq 2\kappa(\mu-\mu')\ (\forall \mu,\mu'\in (0,\widetilde{\mu})\ \mathrm{s.t.
              }\ \mu'\leq \mu, \forall \bm{x} \in \mathrm{int}(\dom{\cost}))$,
            i.e.,
            the condition~\ref{enum:consistency:uniform} in Definition~\ref{definition:smoothing}, while keeping the other conditions.
            Hence,
            $\{\widehat{\cost}^{\langle \mu \rangle}\}_{\mu \in (0,\widetilde{\mu})}$
            is a smoothing function in
            Definition~\ref{definition:smoothing}.
          }
          $0 \leq \cost^{\langle \mu' \rangle}(\bm{x}) - \cost^{\langle \mu \rangle}(\bm{x}) \leq \kappa(\mu-\mu')\ (\forall \mu,\mu'\in (0,\widetilde{\mu})\ \mathrm{s.t.}\ \mu'\leq \mu, \forall \bm{x} \in \mathrm{int}(\dom{\cost}))$.
    \item
          (Smoothness) \label{enum:consistency:gradient_Lipschitz}
          For every
          $\mu \in (0,\widetilde{\mu})$,
          $\cost^{\langle \mu \rangle}$
          is continuously differentiable over
          $\mathrm{int}(\dom{\cost})$,
          and its gradient
          $\nabla \cost^{\langle \mu\rangle}: \mathrm{int}(\dom{\cost}) \to \mathcal{X}$
          is $L_{\nabla \cost^{\langle \mu \rangle}}$-Lipschitz continuous
          over the convex set
          $\mathrm{int}(\dom{\cost})$
          with a Lipschitz constant
          $ L_{\nabla \cost^{\langle \mu \rangle}} > 0$.
    \item
          (Lipschitz constant $L_{\nabla \cost^{\langle \mu \rangle}}$) \label{enum:consistency:gradient_Lipschitz_constant}
          There exist
          $\varpi_{1}\in \mathbb{R}_{+}$
          and
          $\varpi_{2} \in \mathbb{R}_{++}$
          such that
          $L_{\nabla \cost^{\langle \mu \rangle}}$
          in~\ref{enum:consistency:gradient_Lipschitz}
          is given by
          $L_{\nabla \cost^{\langle \mu \rangle}} \coloneqq \varpi_{1}+\varpi_{2}\mu^{-1}\ (\forall\mu \in (0,\widetilde{\mu}))$.
    \item
          (Gradient consistency for $\cost$) \label{enum:consistency:consistency}
          For every
          $\widebar{\bm{x}} \in \mathrm{int}(\dom{\cost})$,
          we have
          \begin{equation}
            \thickmuskip=0.3\thickmuskip
            \medmuskip=0.3\medmuskip
            \thinmuskip=0.3\thinmuskip
            \arraycolsep=0.3\arraycolsep
            \label{eq:consistency}
            \Lsubdiff \cost(\widebar{\bm{x}})
            = \left\{\bm{v} \in \mathcal{X} \middle|
            \begin{array}{l}
              \displaystyle
              \exists (\bm{x}_{n})_{n=1}^{\infty}\subset \mathrm{int}(\dom{\cost}),\
              \exists (\mu_{n})_{n=1}^{\infty}\subset (0,\widetilde{\mu}) \\
              \displaystyle
              {\rm s.t.}\ \widebar{\bm{x}}=\lim_{n\to\infty}\bm{x}_{n}, \
              \mu_{n}\searrow 0,\ {\rm and}\ \bm{v}=\lim_{n\to\infty}\nabla \cost^{\langle \mu_{n} \rangle} (\bm{x}_{n})
            \end{array}
            \right\},
          \end{equation}
          i.e.,
          $\Lsubdiff \cost(\widebar{\bm{x}}) = \bigcup_{\mathrm{int}(\dom{\cost}) \ni \bm{x}_{n}\to \widebar{\bm{x}},\ (0,\widetilde{\mu}) \ni \mu_{n} \searrow 0} \Limsup_{n\to\infty} \nabla \cost^{\langle \mu_{n} \rangle} (\bm{x}_{n})$.
          The condition~\eqref{eq:consistency} is known as a {\em gradient consistency condition}~\cite{Chen12}.
  \end{enumerate}
\end{definition}
Constructions of such smoothing functions are found in, e.g.,~\cite{Chen12,Burke-Hoheisel-Kanzow13,Burke-Hoheisel17,Kume-Yamada24A}.
Example~\ref{example:cost} below illustrates an example of a smoothing function designed with the Moreau envelope.
By combining the conditions~\ref{enum:consistency:pointwise} and~\ref{enum:consistency:uniform} in Definition~\ref{definition:smoothing}, we have the uniform convergence property (see, e.g.,~\cite{Bian-Chen20,Yu-Zhang22}):
\begin{equation}
  (\forall \mu \in (0,\widetilde{\mu}),\ \forall \widebar{\bm{x}} \in \mathrm{int}(\dom{\cost})) \quad
  0\leq \cost(\widebar{\bm{x}}) - \cost^{\langle \mu \rangle}(\widebar{\bm{x}}) \leq \kappa \mu.\label{eq:uniform_bounded_smoothing_original}
\end{equation}

\begin{example}[On $\cost$ achieving Assumption~\ref{assumption:regular}~\ref{enum:assumption:regular:cost} and smoothing function]
  \label{example:cost}
  The function
  $\cost \coloneqq h+ g\circ \mathfrak{S} + \iota_{\smoothingDomain}:\mathcal{X}\to\exR$
  satisfies Assumption~\ref{assumption:basic}~\ref{enum:problem:origin:cost} and Assumption~\ref{assumption:regular}~\ref{enum:assumption:regular:cost}
  by~\cite[Lemma 2.4 and its proof in Appendix A]{Kume-Yamada24A}, if
  (i)
  $h:\mathcal{X} \to \mathbb{R}$
  is differentiable such that
  $\nabla h:\mathcal{X}\to\mathcal{X}$
  is Lipschitz continuous;
  (ii)
  $g:\mathcal{Z} \to \mathbb{R}$
  is Lipschitz continuous and $\eta(>0)$-weakly convex, i.e.,
  $g+\frac{\eta}{2}\norm{\cdot}^{2}$
  is convex over the Euclidean space
  $\mathcal{Z}$;
  (iii)
  $\mathfrak{S}:\mathcal{X} \to \mathcal{Z}$
  is continuously differentiable;
  and
  (iv)
  $\smoothingDomain\subset \mathcal{X}$
  is a certain closed convex set such that
  $\dom{\phi}\subset \mathrm{int}(\smoothingDomain) (= \mathrm{int}(\dom{\cost}))$
  (see~\eqref{eq:indicator}
  for the indicator function
  $\iota_{\smoothingDomain}$).
  For this
  $\cost=h+g\circ\mathfrak{S} + \iota_{\smoothingDomain}$,
  we can construct a smoothing function
  $\{\cost^{\langle \mu \rangle}\}_{\mu \in (0,\widetilde{\mu})}$
  in Definition~\ref{definition:smoothing} as follows.
  Consider
  $\cost^{\langle \mu \rangle}(\widebar{\bm{x}}) \coloneqq (h+\moreau{g}{\mu}\circ\mathfrak{S} + \iota_{\smoothingDomain})(\widebar{\bm{x}})$
  for
  $\mu \in (0,\widetilde{\mu})$
  with
  $\widetilde{\mu} \coloneqq (2\eta)^{-1}$
  and
  $\widebar{\bm{x}} \in \mathcal{X}$,
  where
  $\moreau{g}{\mu}:\mathcal{Z}\to\mathbb{R}:\widebar{\bm{z}} \mapsto g(\prox{\mu g}(\widebar{\bm{z}})) + \frac{1}{2\mu}\norm{\prox{\mu g}(\widebar{\bm{z}}) - \widebar{\bm{z}}}^{2}$
  is
  the {\em Moreau envelope} of
  $g$,
  and
  $\prox{\mu g}$
  with
  $\mu \in (0,\eta^{-1})$
  is single-valued.
  Then,
  $\{\cost^{\langle \mu \rangle}\}_{\mu \in (0,\widetilde{\mu})}$
  is a smoothing function of
  $\cost$
  if
  $\mathfrak{S}$
  and its Fr\'echet derivative
  are Lipschitz continuous over
  $\mathrm{int}(\smoothingDomain)$~\cite[Lemma 4.1, Prop. 4.2, and Thm. 4.4]{Kume-Yamada24A},
  where
  $\kappa$
  in Definition~\ref{definition:smoothing}~\ref{enum:consistency:uniform} is given by
  $L_{g}^{2}$
  with a Lipschitz constant
  $L_{g}$
  of
  $g$~\cite[Lemma 4.1]{Kume-Yamada24A},
  $\varpi_{1}$
  and
  $\varpi_{2}$
  in Definition~\ref{definition:smoothing}~\ref{enum:consistency:gradient_Lipschitz_constant} are given in~\cite[Prop. 4.2]{Kume-Yamada24A}.
  This type of smoothing function has been used in variable smoothing-type algorithms, e.g.,~\cite{Bot-Hendrich15,Bohm-Wright21,Liu-Xia24,Kume-Yamada24A,Lopes-Peres-Bilches25,Kume-Yamada25B,Yazawa-Kume-Yamada26,Long-Zeng-Li-Dao-Peng26}.
  This function
  $\cost \coloneqq h+g\circ \mathfrak{S}+\iota_{D}$
  appears, e.g., in robust signal processing and machine learning applications~\cite{Bian-Chen20,Yu-Zhang22,Duchi-Ruan18,Zhen-Ma-Xue24,Yazawa-Kume-Yamada26,Charisopoulos-Chen-Davis-Diaz-Ding-Drusvyatskiy21,Suzuki-Yukawa21,Yang-Chen-Ma-Chen-Gu-So19,Kume-Yamada25C,Carrillo-Ramirez-Arce-Barner-Sadler16} for estimating
  $\bm{x}^{\diamond} \in \mathcal{X}$
  from an observation
  $\bm{y} \coloneqq \mathcal{A}(\bm{x}^{\diamond}) + \bm{\epsilon}\in\mathbb{R}^{m}$
  with noise
  $\bm{\epsilon} \in \mathbb{R}^{m}$
  and known measurement mapping
  $\mathcal{A}:\mathcal{X} \to \mathbb{R}^{m}$
  such that
  $\mathcal{A}$
  is at least continuously differentiable.
  Indeed, by letting
  $h\equiv 0$,
  $\mathfrak{S}:\mathcal{X}\to\mathbb{R}^{m}:\widebar{\bm{x}}\mapsto \bm{y} - \mathcal{A}(\widebar{\bm{x}})$,
  $g:\mathbb{R}^{m} \to \mathbb{R}:\widebar{\bm{z}} \mapsto \sum_{i=1}^{m} \ell([\widebar{\bm{z}}]_{i})$
  with a Lipschitz continuous and weakly convex function
  $\ell:\mathbb{R}\to\mathbb{R}$,
  and
  $\smoothingDomain\coloneqq \mathcal{X}$,
  the function
  $h+g\circ \mathfrak{S}+\iota_{\smoothingDomain}=g\circ\mathfrak{S}$
  serves as a data fidelity function for robust signal estimation, where
  the loss function
  $\ell$
  is chosen from, e.g.,
  the $\ell_{1}$-norm
  $\norm{\cdot}_{1}$,
  the smoothly clipped absolute deviation (SCAD)~\cite{Fan-Li01} and the Minimax Concave Penalty (MCP)~\cite{Zhang10}.
  For other examples of this type of
  $\cost$,
  see, e.g.,~\cite{Bohm-Wright21,Liu-Xia24,Kume-Yamada24A,Kume-Yamada25B,Kume-Yamada25C}.
\end{example}

Lemma~\ref{lemma:bounded_gradient} presents a key property of a smoothing function
for deriving our asymptotic analysis of
$\mathcal{M}_{\gamma}^{\cost,\phi}$
in Theorem~\ref{theorem:asymptotic_approximation}.
We note that this boundedness property has been shown for specific smoothing functions, e.g.,~\cite{Kume-Yamada25B} for a smoothing function in Example~\ref{example:cost}, and~\cite{Borges-Sagastizbal-Solodov21} for a smoothing function of an optimal value function.
Moreover, this boundedness property is not necessarily ensured by the gradient consistency property of a smoothing function alone, as remarked in~\cite[p.138]{Borges-Sagastizbal-Solodov21}.
Lemma~\ref{lemma:bounded_gradient} reveals that this boundedness property holds for every smoothing function in Definition~\ref{definition:smoothing}.

\begin{lemma}[Boundedness of gradient sequences of smoothing function]
  \label{lemma:bounded_gradient}
  Let
  $\cost$
  satisfy Assumption~\ref{assumption:basic}~\ref{enum:problem:origin:cost} and let
  $\{\cost^{\langle \mu \rangle}\}_{\mu \in (0,\widetilde{\mu})}$
  with some
  $\widetilde{\mu} \in \exRp$
  be a smoothing function of
  $\cost$.
  Let
  $\widebar{\bm{x}} \in \mathrm{int}(\dom{\cost})$
  and
  $(\bm{x}_{n})_{n=1}^{\infty} \subset \mathrm{int}(\dom{\cost})$
  satisfy
  $\lim_{n\to\infty}\bm{x}_{n} = \widebar{\bm{x}}$.
  Then, for
  $\mu_{n} (\in (0,\widetilde{\mu})) \searrow 0\ (n\in\mathbb{N})$,
  $(\nabla \cost^{\langle \mu_{n} \rangle}(\bm{x}_{n}))_{n=1}^{\infty}$ is bounded.
\end{lemma}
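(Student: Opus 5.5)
We argue by contradiction: assume $(\nabla \cost^{\langle \mu_{n} \rangle}(\bm{x}_{n}))_{n=1}^{\infty}$ is unbounded. The plan is to show that an unbounded gradient forces a large drop of $\cost^{\langle\mu_n\rangle}$ along a short segment, and that such a drop contradicts the local Lipschitz continuity of $\cost$ near $\widebar{\bm{x}}$ together with the uniform bound~\eqref{eq:uniform_bounded_smoothing_original}. The gradient consistency condition alone will not be used, in line with the remark before the lemma.

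\textbf{Setup.} By Assumption~\ref{assumption:basic}~\ref{enum:problem:origin:cost}, choose $\delta>0$ and $L>0$ such that $\overline{B(\widebar{\bm{x}},2\delta)}\subset \mathrm{int}(\dom{\cost})$ and $\cost$ is $L$-Lipschitz on $B(\widebar{\bm{x}},2\delta)$. Discarding finitely many terms, we may assume $\bm{x}_n\in B(\widebar{\bm{x}},\delta)$ for all $n$. Write $\bm{g}_n\coloneqq\nabla \cost^{\langle \mu_n \rangle}(\bm{x}_n)$, $L_n\coloneqq L_{\nabla \cost^{\langle\mu_n\rangle}}=\varpi_1+\varpi_2\mu_n^{-1}$, and $\bm{d}_n\coloneqq-\bm{g}_n/\norm{\bm{g}_n}$ whenever $\bm{g}_n\neq\bm{0}$.

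\textbf{Descent step.} For $t\in(0,\delta]$, the point $\bm{y}\coloneqq\bm{x}_n+t\bm{d}_n$ lies in $B(\widebar{\bm{x}},2\delta)$. The descent lemma, valid by Definition~\ref{definition:smoothing}~\ref{enum:consistency:gradient_Lipschitz} on the convex set $\mathrm{int}(\dom{\cost})$, gives
\[
\cost^{\langle \mu_n\rangle}(\bm{y})\le \cost^{\langle \mu_n\rangle}(\bm{x}_n)-t\norm{\bm{g}_n}+\tfrac{L_n}{2}t^2 .
\]

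\textbf{Passing to $\cost$.} By~\eqref{eq:uniform_bounded_smoothing_original}, $\cost^{\langle\mu_n\rangle}(\bm{y})\ge \cost(\bm{y})-\kappa\mu_n$ and $\cost^{\langle\mu_n\rangle}(\bm{x}_n)\le \cost(\bm{x}_n)$. The Lipschitz bound gives $\cost(\bm{y})\ge \cost(\bm{x}_n)-Lt$. Combining these three inequalities with the descent step yields
\[
t\norm{\bm{g}_n}\le Lt+\kappa\mu_n+\tfrac{L_n}{2}t^2,
\qquad\text{i.e.}\qquad
\norm{\bm{g}_n}\le L+\frac{\kappa\mu_n}{t}+\frac{(\varpi_1+\varpi_2\mu_n^{-1})\,t}{2}.
\]

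\textbf{Choice of $t$ (the key step).} If $t$ were fixed, the term $\varpi_2\mu_n^{-1}t$ would blow up as $\mu_n\to0$, which is why this step needs care. The remedy is to let $t$ scale with $\mu_n$: take $t=t_n\coloneqq\mu_n$, which is admissible ($t_n\le\delta$) for all large $n$ because $\mu_n\searrow 0$. Then
\[
\norm{\bm{g}_n}\le L+\kappa+\tfrac{\varpi_1\mu_n}{2}+\tfrac{\varpi_2}{2},
\]
and the right-hand side is bounded uniformly in $n$, since $\mu_n\le\mu_1$. This bound contradicts the assumed unboundedness. In fact it is a direct bound, so the contradiction framing can be dropped, and the finitely many discarded initial terms are trivially bounded.

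The only subtle points are that the descent lemma requires the segment $[\bm{x}_n,\bm{y}]$ to lie in $\mathrm{int}(\dom{\cost})$, which holds by convexity, and that the Lipschitz constant $L$ is used only on $B(\widebar{\bm{x}},2\delta)$.
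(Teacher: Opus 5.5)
Your proof is correct and follows essentially the same route as the paper. Both take a descent-lemma step of length proportional to $\mu_n$ along $-\nabla \cost^{\langle \mu_n\rangle}(\bm{x}_n)$, then combine it with the uniform bound~\eqref{eq:uniform_bounded_smoothing_original} and the Lipschitz continuity of $\cost$ near $\widebar{\bm{x}}$. The differences are only cosmetic. You discard finitely many terms and use a Lipschitz ball around $\widebar{\bm{x}}$, whereas the paper keeps every index through bounded factors $\alpha_n$, gets its Lipschitz constant on a compact convex hull $K$, and presents the resulting direct bound as a contradiction.
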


\begin{proof}
  Suppose, to the contrary, that
  $(\nabla \cost^{\langle \mu_{n} \rangle}(\bm{x}_{n}))_{n=1}^{\infty}$
  is unbounded.
  We can assume
  $\lim_{n\to\infty} \norm{\nabla \cost^{\langle \mu_{n} \rangle}(\bm{x}_{n})} = +\infty$
  and
  $\nabla \cost^{\langle \mu_{n} \rangle}(\bm{x}_{n}) \neq \bm{0}\ (n\in\mathbb{N})$
  (by passing through subsequence if necessary).
  Let
  $\bm{g}_{n} \coloneqq \nabla \cost^{\langle \mu_{n} \rangle}(\bm{x}_{n})/\norm{\nabla \cost^{\langle \mu_{n} \rangle}(\bm{x}_{n})}\in\mathcal{X}$.
  In the remainder of this paragraph, we show that there exists
  $(\alpha_{n})_{n=1}^{\infty} \subset \mathbb{R}_{++}$
  satisfying
  \begin{align}
     & (n\in\mathbb{N})\quad
    \bm{y}_{n} \coloneqq \bm{x}_{n} - \alpha_{n} \mu_{n}\bm{g}_{n} \in \mathrm{int}(\dom{\cost}),\label{eq:y_int_dom} \\
     & \alpha_{\rm low}\coloneqq \inf_{n\in\mathbb{N}} \alpha_{n} > 0,
    \ \mathrm{and}\
    \alpha_{\rm up}\coloneqq \sup_{n\in\mathbb{N}} \alpha_{n} < +\infty. \label{eq:alpha_bound}
  \end{align}
  From
  $\widebar{\bm{x}} \in \mathrm{int}(\dom{\cost})$,
  there exists
  $r \in \mathbb{R}_{++}$
  such that
  $B(\widebar{\bm{x}},r)\subset \mathrm{int}(\dom{\cost})$.
  From
  $\lim_{n\to\infty}\bm{x}_{n} = \widebar{\bm{x}}$,
  there exists
  $n_{0} \in \mathbb{N}$
  such that
  $\norm{\bm{x}_{n}-\widebar{\bm{x}}} < r/2\ (\forall n\geq n_{0})$.
  For
  $n < n_{0}$,
  we can choose
  a sufficiently small
  $\alpha_{n} \in \mathbb{R}_{++}$
  satisfying
  $\bm{y}_{n} = \bm{x}_{n} - \alpha_{n} \mu_{n}\bm{g}_{n} \in \mathrm{int}(\dom{\cost})$
  due to
  $\bm{x}_{n} \in \mathrm{int}(\dom{\cost})$.
  For
  $n \geq n_{0}$,
  by letting
  $\alpha_{n} \coloneqq \frac{r}{2\mu_{1}}$,
  we get
  $\bm{y}_{n} \in B(\widebar{\bm{x}}, r) \subset \mathrm{int}(\dom{\cost})$
  from
  $\norm{\bm{y}_{n}-\widebar{\bm{x}}} \leq \norm{\bm{x}_{n}-\widebar{\bm{x}}} + \alpha_{n}\mu_{n}\norm{\bm{g}_{n}} < \frac{r}{2} + \frac{r\mu_{n}}{2\mu_{1}} \leq \frac{r}{2} + \frac{r}{2} = r$
  due to
  $\bm{x}_{n} \in B(\widebar{\bm{x}}, r/2)$,
  $\norm{\bm{g}_{n}} = 1$
  and
  $\mu_{n} \leq \mu_{1}$.
  Consequently, we get
  $\bm{y}_{n} \in \mathrm{int}(\dom{\cost})\ (\forall n\in\mathbb{N})$,
  i.e.,~\eqref{eq:y_int_dom},
  and
  $\inf_{n\in\mathbb{N}}\alpha_{n} = \min\{\alpha_{1},\alpha_{2},\alpha_{3},\ldots,\alpha_{n_{0}-1}, \frac{r}{2\mu_{1}} \} > 0$
  and
  $\sup_{n\in\mathbb{N}}\alpha_{n} = \max\{\alpha_{1},\alpha_{2},\alpha_{3},\ldots,\alpha_{n_{0}-1}, \frac{r}{2\mu_{1}} \} < +\infty$,
  i.e.,~\eqref{eq:alpha_bound}.

  Since
  $\nabla \cost^{\langle \mu_{n} \rangle}$
  is $L_{\nabla \cost^{\langle \mu_{n} \rangle}}$-Lipschitz continuous
  over the convex set
  $\mathrm{int}(\dom{\cost})$
  (see Definition~\ref{definition:smoothing}~\ref{enum:consistency:gradient_Lipschitz} and Assumption~\ref{assumption:basic}~\ref{enum:problem:origin:cost}),
  the descent lemma~\cite[Lemma 5.7]{Beck17} yields
  \begin{equation}
    \thickmuskip=0.2\thickmuskip
    \medmuskip=0.2\medmuskip
    \thinmuskip=0.2\thinmuskip
    \arraycolsep=0.2\arraycolsep
    (n\in\mathbb{N}) \quad
    \cost^{\langle \mu_{n} \rangle}(\bm{y}_{n}) \leq \cost^{\langle \mu_{n} \rangle}(\bm{x}_{n}) + \inprod{\nabla \cost^{\langle \mu_{n} \rangle}(\bm{x}_{n})}{\bm{y}_{n}-\bm{x}_{n}} + \frac{L_{\nabla \cost^{\langle \mu_{n} \rangle}}}{2}\norm{\bm{x}_{n}-\bm{y}_{n}}^{2}. \label{eq:descent_inequality}
  \end{equation}
  By
  $\bm{y}_{n}-\bm{x}_{n} = -\alpha_{n}\mu_{n}\bm{g}_{n}$,
  $\bm{g}_{n} = \nabla \cost^{\langle \mu_{n} \rangle}(\bm{x}_{n})/\norm{\nabla \cost^{\langle \mu_{n} \rangle}(\bm{x}_{n})}$
  and
  $0<\alpha_{\rm low}\leq \alpha_{n} \leq \alpha_{\rm up}$
  in~\eqref{eq:alpha_bound},
  we have
  $\inprod{\nabla \cost^{\langle \mu_{n} \rangle}(\bm{x}_{n})}{\bm{y}_{n}-\bm{x}_{n}} = -\alpha_{n}\mu_{n}\norm{\nabla \cost^{\langle \mu_{n} \rangle}(\bm{x}_{n})} \leq -\alpha_{\rm low}\mu_{n}\norm{\nabla \cost^{\langle \mu_{n} \rangle}(\bm{x}_{n})}$
  and
  $\frac{L_{\nabla \cost^{\langle \mu_{n} \rangle}}}{2}\norm{\bm{x}_{n}-\bm{y}_{n}}^{2} \leq L_{\nabla \cost^{\langle \mu_{n} \rangle}}\norm{\bm{x}_{n}-\bm{y}_{n}}^{2}=
    L_{\nabla \cost^{\langle \mu_{n} \rangle}}\alpha_{n}^{2}\mu_{n}^{2} = \alpha_{n}^{2}\mu_{n}(\varpi_{1}\mu_{n} + \varpi_{2}) \leq \alpha_{\rm up}^{2}\mu_{n}(\varpi_{1}\mu_{1} + \varpi_{2})$,
  where we used
  $\mu_{n}L_{\nabla \cost^{\langle \mu_{n} \rangle}} = \varpi_{1}\mu_{n} + \varpi_{2} \leq \varpi_{1}\mu_{1} + \varpi_{2}$
  (see Definition~\ref{definition:smoothing}~\ref{enum:consistency:gradient_Lipschitz_constant}).
  Together with~\eqref{eq:descent_inequality},
  we get
  $\cost^{\langle \mu_{n} \rangle}(\bm{y}_{n}) \leq \cost^{\langle \mu_{n} \rangle}(\bm{x}_{n}) -\alpha_{\rm low}\mu_{n}\norm{\nabla \cost^{\langle \mu_{n} \rangle}(\bm{x}_{n})} + \alpha_{\rm up}^{2}\mu_{n}(\varpi_{1}\mu_{1} + \varpi_{2})$.
  By
  $\alpha_{\rm low} \mu_{n} > 0$,
  \begin{equation}
    (n\in\mathbb{N})\quad
    \norm{\nabla \cost^{\langle \mu_{n} \rangle}(\bm{x}_{n})}
    \leq \frac{\cost^{\langle \mu_{n} \rangle}(\bm{x}_{n})-\cost^{\langle \mu_{n} \rangle}(\bm{y}_{n})}{\alpha_{\rm low}\mu_{n}} + \frac{\alpha_{\rm up}^{2}(\varpi_{1}\mu_{1} + \varpi_{2})}{\alpha_{\rm low}} . \label{eq:smoothing_gradient_upper}
  \end{equation}

  In the following, we derive a finite upper bound of
  $\sup_{n\in\mathbb{N}}\norm{\nabla \cost^{\langle \mu_{n} \rangle}(\bm{x}_{n})}$,
  which contradicts
  $\lim_{n\to\infty} \norm{\nabla \cost^{\langle \mu_{n} \rangle}(\bm{x}_{n})} = +\infty$.
  To this end, it is enough to derive
  an upper bound, independent of
  $n$,
  of the first term in the RHS of~\eqref{eq:smoothing_gradient_upper}.

  We show that
  $\cost$
  is Lipschitz continuous over some compact convex set containing
  $\widebar{\bm{x}}$,
  $\bm{x}_{n}$
  and
  $\bm{y}_{n}\ (n\in\mathbb{N})$.
  By
  $\alpha_{n}\mu_{n}\bm{g}_{n} \to \bm{0}$
  and
  $\bm{x}_{n} \to \widebar{\bm{x}}$
  $(n\to\infty)$,
  we have
  $\lim_{n\to\infty} \bm{y}_{n} = \widebar{\bm{x}}$.
  Together with
  $\widebar{\bm{x}},\bm{x}_{n},\bm{y}_{n} \in \mathrm{int}(\dom{\cost})$
  (see~\eqref{eq:y_int_dom}),
  $\{\widebar{\bm{x}}\}\cup \bigcup_{n=1}^{\infty}\{\bm{x}_{n},\bm{y}_{n}\}\subset \mathrm{int}(\dom{\cost})$
  is a compact set in the Euclidean space
  $\mathcal{X}$.
  Let
  $K \coloneqq \Conv(\{\widebar{\bm{x}}\}\cup \bigcup_{n=1}^{\infty}\{\bm{x}_{n},\bm{y}_{n}\})$
  be the convex hull, i.e.,
  the smallest convex subset of
  $\mathcal{X}$
  containing
  $\{\widebar{\bm{x}}\}\cup \bigcup_{n=1}^{\infty}\{\bm{x}_{n},\bm{y}_{n}\}$.
  Then,
  a corollary of Carath\'eodory's theorem (see, e.g.,~\cite[Thm. 17.2]{Rockafellar70}) yields the compactness of
  $K$.
  Moreover,
  we have
  $K \subset \mathrm{int}(\dom{\cost})$,
  because
  $\mathrm{int}(\dom{\cost})$
  is convex and
  $K$
  is the convex hull of the subset
  $\{\widebar{\bm{x}}\}\cup \bigcup_{n=1}^{\infty}\{\bm{x}_{n},\bm{y}_{n}\}$
  of
  $\mathrm{int}(\dom{\cost})$.
  Since
  $\cost$
  is locally Lipschitz continuous at every
  $\bm{x} \in \mathrm{int}(\dom{\cost})$
  by Assumption~\ref{assumption:basic}~\ref{enum:problem:origin:cost},
  $\cost$
  is $L_{\cost}$-Lipschitz continuous with some
  $L_{\cost} > 0$
  over the compact convex set
  $K (\subset\mathrm{int}(\dom{\cost}))$
  by~\cite[Thm. 9.2]{Rockafellar-Wets98}.

  For
  $n\in\mathbb{N}$,
  $\cost^{\langle \mu_{n} \rangle}(\bm{x}_{n}) \leq \cost(\bm{x}_{n})$
  and
  $\cost(\bm{y}_{n}) \leq \cost^{\langle \mu_{n} \rangle}(\bm{y}_{n}) + \kappa \mu_{n}$
  follow from~\eqref{eq:uniform_bounded_smoothing_original}.
  By
  $\bm{x}_{n},\bm{y}_{n} \in K$,
  we obtain
  $\cost^{\langle \mu_{n} \rangle}(\bm{x}_{n}) - \cost^{\langle \mu_{n} \rangle}(\bm{y}_{n}) \leq \cost(\bm{x}_{n}) - \cost(\bm{y}_{n}) + \kappa \mu_{n} \leq L_{\cost}\norm{\bm{x}_{n}-\bm{y}_{n}} + \kappa \mu_{n} \overset{\eqref{eq:y_int_dom}}{=} L_{\cost}\alpha_{n}\mu_{n} + \kappa \mu_{n} \overset{\eqref{eq:alpha_bound}}{\leq} \mu_{n}(L_{\cost}\alpha_{\rm up}+\kappa)$.
  By combining~\eqref{eq:smoothing_gradient_upper},
  we get
  \begin{equation}
    (n\in\mathbb{N})\quad
    \norm{\nabla \cost^{\langle \mu_{n} \rangle}(\bm{x}_{n})}
    \leq \frac{L_{\cost}\alpha_{\rm up}+\kappa + \alpha_{\rm up}^{2} (\varpi_{1}\mu_{1} + \varpi_{2})}{\alpha_{\rm low}}
    < +\infty.
  \end{equation}
  This inequality contradicts
  $\lim_{n\to\infty} \norm{\nabla \cost^{\langle \mu_{n} \rangle}(\bm{x}_{n})} = +\infty$.
\end{proof}

\subsubsection{Asymptotic properties of $\mathcal{M}_{\gamma}^{\cost,\phi}$} \label{sec:stationarity:asymptotic:theorem}
We derive asymptotic properties of
$\mathcal{M}_{\gamma}^{\cost,\phi}$
in Theorem~\ref{theorem:asymptotic_approximation}, which is the goal of Section~\ref{sec:stationarity:asymptotic}.
A part of Theorem~\ref{theorem:asymptotic_approximation} requires Assumption~\ref{assumption:phi} on
$\phi$
below.
Its sufficient condition is the continuity of
$\phi$
over
$\dom{\phi}$~\cite[Prop. 8.7]{Rockafellar-Wets98}, while some discontinuous functions, e.g., $\ell_{0}$-pseudonorm, satisfy Assumption~\ref{assumption:phi} (see Example~\ref{example:phi}).

\begin{assumption}[Outer semicontinuity of $\Lsubdiff \phi$ over $\dom{\phi}$]
  \label{assumption:phi}
  For
  $\phi$
  in Assumption~\ref{assumption:basic},
  $\Lsubdiff \phi$
  is outer semicontinuous at every
  $\widebar{\bm{x}} \in \dom{\phi}$,
  i.e.,
    {\thickmuskip=2mu plus 1mu minus 1mu
      \medmuskip=1mu plus 1mu minus 1mu
      $\Limsup_{n\to\infty} \Lsubdiff \phi(\bm{x}_{n}) \subset \Lsubdiff \phi(\widebar{\bm{x}})$}
  holds for every convergent sequence
  $(\bm{x}_{n})_{n=1}^{\infty} \subset \mathcal{X}$
  to
  $\widebar{\bm{x}}$.
\end{assumption}

\begin{example}[On $\phi$ achieving Assumption~\ref{assumption:regular}~\ref{enum:assumption:regular:phi} and Assumption~\ref{assumption:phi}]
  \label{example:phi}
  Typical examples of
  $\phi$
  achieving
  Assumptions~\ref{assumption:basic}~\ref{enum:problem:origin:phi},~\ref{assumption:regular}~\ref{enum:assumption:regular:phi} and~\ref{assumption:phi}
  are (i) proper lower semicontinuous convex functions (see~\cite[Exm. 13.30]{Rockafellar-Wets98} for Assumption~\ref{assumption:regular}~\ref{enum:assumption:regular:phi}, and~\cite[Thm. 5.7 (a)]{Rockafellar-Wets98} with~\cite[Prop. 16.36]{Bauschke-Combettes17} for Assumption~\ref{assumption:phi}), (ii) prox-bounded differentiable functions with locally Lipschitz continuous gradient~\cite[Prop. 13.34]{Rockafellar-Wets98}, and
  (iii)
  the indicator function
  $\iota_{\constraint}$
  with a {\em prox-regular set}
  $\constraint\subset \mathcal{X}$~\cite[Thm. 1.3]{Poliquin-Rockafellar-Thibault00},
  where Assumption~\ref{assumption:phi} always holds under the continuity of
  $\phi$
  over
  $\dom{\phi}$
  by~\cite[Prop. 8.7]{Rockafellar-Wets98}.
  Here,
  $C$
  is said to be prox-regular if the {\em metric projection}
  $P_{\constraint}:\mathcal{X} \rightrightarrows\constraint: \widebar{\bm{x}}\mapsto \mathop{\mathrm{argmin}}_{\bm{x}\in\constraint}\norm{\widebar{\bm{x}}-\bm{x}}$
  onto
  $\constraint$
  is single-valued on some open superset of
  $\constraint$.
  Such prox-regular sets include, e.g., closed convex sets,
  $C^{2}$ embedded submanifolds of
  $\mathcal{X}$~\cite[Lemma 2.1]{Lewis-Malick08}, and {\em proximally smooth sets}\footnote{\label{foot:proximally_smooth}
    Proximally smooth sets include, e.g.,
    $\mathrm{Ob}^{+}(n,p) \coloneqq \mathrm{Ob}(n,p)\cap \mathbb{R}_{+}^{n\times p}$~\cite[Lemma 3 (ii)]{Hu-Deng-Wu-Li24}
    with the oblique manifold
    $\mathrm{Ob}(n,p) \coloneqq \{\bm{X} \in \mathbb{R}^{n\times p} \mid \mathrm{diag}(\bm{X}^{\TT}\bm{X}) = \bm{I}_{p}\}$,
    and a closed subset
    $\mathfrak{L}_{r,\sigma}\coloneqq \left\{\bm{X} \in \mathbb{R}^{n_{1}\times n_{2}} \mid 0 < \mathrm{Rank}(\bm{X}) \leq r,\ \sigma \leq \sigma_{j}(\bm{X})\ \mathrm{or}\ \sigma_{j}(\bm{X})=0 \ (j=1,2,\ldots, r)\right\}$
    of
    low-rank matrices
    $\{\bm{X}\in \mathbb{R}^{n_{1}\times n_{2}} \mid \mathrm{Rank}(\bm{X}) \leq r\}$
    with
    $r \leq \min\{n_{1},n_{2}\}$
    and
    $\sigma \in \mathbb{R}_{++}$~\cite[Thm. 5]{Balashov-Kamalov21}.
  }~\cite{Federer59,Clarke-Stern-Wolenski95,Balashov-Kamalov21,Hu-Deng-Wu-Li24,Davis-Drusvyatskiy-Shi25}.
  Even if
  $\phi$
  is discontinuous over
  $\dom{\phi}$,
  we have examples of
  $\phi$
  satisfying all these assumptions,
  e.g.,
  (iv) $\ell_{0}$-pseudonorm\footnote{
    The prox-regularity of $\ell_{0}$-pseudonorm is shown in~\cite[Exm. 4.1]{Liang-Fadili-Peyre16}.
    We can verify that $\ell_{0}$-pseudonorm
    $\norm{\widebar{\bm{x}}}_{0}\coloneqq \sum_{i=1}^{n}\xi([\widebar{\bm{x}}]_{i})$
    satisfies Assumption~\ref{assumption:phi} as follows, where
    $\xi(t) \coloneqq 1$
    if
    $t \in \mathbb{R}\setminus \{0\}$;
    $\xi(t)\coloneqq 0$
    if
    $t=0$.
    By~\cite[Prop. 10.5]{Rockafellar-Wets98}, we have
    $\Lsubdiff \norm{\widebar{\bm{x}}}_{0} = \Lsubdiff \xi([\widebar{\bm{x}}]_{1}) \times \Lsubdiff \xi([\widebar{\bm{x}}]_{2}) \times \cdots \times \Lsubdiff \xi([\widebar{\bm{x}}]_{n})\ (\widebar{\bm{x}} \in \mathbb{R}^{n})$.
    Since
    $\Lsubdiff \xi(t)= \{0\}$
    holds if
    $t \in \mathbb{R}\setminus\{0\}$;
    $\Lsubdiff \xi(t) = \mathbb{R}$
    holds if
    $t=0$,
    $\Lsubdiff \norm{\cdot}_{0}$
    is outer semicontinuous over
    $\mathbb{R}^{n}$.
    We can check the outer semicontinuity of
    $\Lsubdiff \mathrm{Rank}$
    in a similar way with~\cite[Thm. 7.1]{Lewis-Sendov05}.
  }~\cite[Exm. 4.1]{Liang-Fadili-Peyre16}
  and (v) the rank function~\cite[Exm. 4.2]{Liang-Fadili-Peyre16}.
\end{example}

Theorem~\ref{theorem:asymptotic_approximation} below is a key ingredient for our characterizations of stationarity conditions of
$\cost+\phi$
via asymptotic behaviors of
$\mathcal{M}_{\gamma}^{\cost,\phi}$
(see Section~\ref{sec:measure:stationarity}).

\begin{theorem}[Asymptotic properties of $\mathcal{M}_{\gamma_{n}}^{\cost,\phi}$ and $\mathcal{M}_{\gamma_{n}}^{\cost^{\langle \mu_{n} \rangle},\phi}$]
  \label{theorem:asymptotic_approximation}
  Let
  $\cost$
  and
  $\phi$
  satisfy Assumption~\ref{assumption:basic}.
  Let
  $(\bm{x}_{n})_{n=1}^{\infty} \subset \mathrm{int}(\dom{\cost})$
  converge to some
  $\widebar{\bm{x}} \in \mathrm{int}(\dom{\cost})$,
  and
  $(\gamma_{n})_{n=1}^{\infty} \subset (0,\gamma_{\phi})$
  converge to some
  $\widebar{\gamma} \in [0,\gamma_{\phi})$.
  Then, the following hold,
  where the second inequalities in~\eqref{eq:asymptotic_measure_positive} and~\eqref{eq:asymptotic_measure_zero} hold
  if
  $\{\cost^{\langle \mu \rangle}\}_{\mu\in (0,\widetilde{\mu})}$
  is a smoothing function of
  $\cost$
  and
  $\mu_{n}(\in (0,\widetilde{\mu})) \searrow 0\ (n\in\mathbb{N})$:
  \begin{enumerate}[label=(\alph*)]
    \item
          \label{enum:theorem:asymptotic_approximation:positive_gamma}
          Consider the case
          $\widebar{\gamma}\coloneqq \lim_{n\to\infty}\gamma_{n} \in (0,\gamma_{\phi})$.
          Then, we have
          \begin{equation}
            \left(\widebar{\bm{x}} \in \mathrm{int}(\dom{\cost})\right) \quad
            \mathcal{M}_{\widebar{\gamma}}^{\cost,\phi}(\widebar{\bm{x}}) \leq
            \begin{cases}
              \liminf\limits_{n\to\infty} \mathcal{M}_{\gamma_{n}}^{\cost,\phi}(\bm{x}_{n}); \\
              \liminf\limits_{n\to\infty} \mathcal{M}_{\gamma_{n}}^{\cost^{\langle \mu_{n} \rangle},\phi}(\bm{x}_{n}).
            \end{cases}
            \label{eq:asymptotic_measure_positive}
          \end{equation}
    \item
          \label{enum:theorem:asymptotic_approximation:zero_gamma}
          Consider the case
          $\widebar{\gamma} \coloneqq \lim_{n\to\infty}\gamma_{n}=0$.
          Under Assumption~\ref{assumption:phi} on
          $\phi$,
          we have
          \begin{equation}
            (\widebar{\bm{x}} \in \dom{\phi})\quad
            \dist(\bm{0},  \Lsubdiff \cost(\widebar{\bm{x}}) + \Lsubdiff\phi(\widebar{\bm{x}})) \leq
            \begin{cases}
              \liminf\limits_{n\to\infty} \mathcal{M}_{\gamma_{n}}^{\cost,\phi}(\bm{x}_{n}); \\
              \liminf\limits_{n\to\infty} \mathcal{M}_{\gamma_{n}}^{\cost^{\langle \mu_{n} \rangle},\phi}(\bm{x}_{n}).
            \end{cases}
            \label{eq:asymptotic_measure_zero}
          \end{equation}
  \end{enumerate}
\end{theorem}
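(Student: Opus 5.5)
The plan is a subsequence-and-limit argument that treats both inequalities in each case in one pass. Write $\bm{v}_{n}$ for either an element of $\Lsubdiff\cost(\bm{x}_{n})$ or $\nabla\cost^{\langle\mu_{n}\rangle}(\bm{x}_{n})$.

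First I will pass to a subsequence along which $\mathcal{M}_{\gamma_{n}}(\bm{x}_{n})$ tends to the $\liminf$ (written $\ell$). If $\ell=+\infty$ there is nothing to prove, so assume $\ell<+\infty$. For each $n$ I will choose $\bm{v}_{n}$ and $\bm{p}_{n}\in\prox{\gamma_{n}\phi}(\bm{x}_{n}-\gamma_{n}\bm{v}_{n})$ with $\norm{(\bm{x}_{n}-\bm{p}_{n})/\gamma_{n}}\le \mathcal{M}_{\gamma_{n}}(\bm{x}_{n})+1/n$. In the $\Lsubdiff\cost$ case, Lemma~\ref{lemma:measure_optimality}\ref{enum:lemma:measure_optimality:existence} even gives exact minimizers; in the smoothing case $\bm{v}_{n}$ is the gradient itself.

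The next step is boundedness of $(\bm{v}_{n})$. In the smoothing case this is exactly Lemma~\ref{lemma:bounded_gradient}. In the $\Lsubdiff\cost$ case it follows from local Lipschitz continuity of $\cost$ near $\widebar{\bm{x}}$, since $\Lsubdiff\cost(\bm{x})\subset L B(\bm{0},1)$ on a neighborhood (cf.\ Fact~\ref{fact:subdifferential_nonempty}). Passing to a further subsequence, $\bm{v}_{n}\to\widebar{\bm{v}}$ and $\bm{w}_{n}\coloneqq(\bm{x}_{n}-\bm{p}_{n})/\gamma_{n}\to\widebar{\bm{w}}$ with $\norm{\widebar{\bm{w}}}\le\ell$. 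The limit satisfies $\widebar{\bm{v}}\in\Lsubdiff\cost(\widebar{\bm{x}})$: in the smoothing case this is the gradient consistency \eqref{eq:consistency}, and in the other case it is the outer semicontinuity of $\Lsubdiff\cost$ at interior points, where $\cost$ is continuous so that $\cost(\bm{x}_{n})\to\cost(\widebar{\bm{x}})$ holds automatically.

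For case (a), $\gamma_{n}\to\widebar{\gamma}>0$ gives $\bm{p}_{n}=\bm{x}_{n}-\gamma_{n}\bm{w}_{n}\to\widebar{\bm{p}}\coloneqq\widebar{\bm{x}}-\widebar{\gamma}\widebar{\bm{w}}$ and $\bm{x}_{n}-\gamma_{n}\bm{v}_{n}\to\widebar{\bm{x}}-\widebar{\gamma}\widebar{\bm{v}}$. I will then invoke the outer semicontinuity of the joint mapping $(\bm{z},\gamma)\mapsto\prox{\gamma\phi}(\bm{z})$ for $\gamma\in(0,\gamma_{\phi})$, the standard fact behind Fact~\ref{fact:prox_set_mapping}, i.e.\ \cite[Thm.~1.25]{Rockafellar-Wets98}. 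This gives $\widebar{\bm{p}}\in\prox{\widebar{\gamma}\phi}(\widebar{\bm{x}}-\widebar{\gamma}\widebar{\bm{v}})$. If only fixed-$\gamma$ outer semicontinuity is available, I will verify the claim directly by passing to the limit in the inequality $\phi(\bm{p}_{n})+\frac{1}{2\gamma_{n}}\norm{\bm{p}_{n}-\bm{z}_{n}}^{2}\le\phi(\bm{u})+\frac{1}{2\gamma_{n}}\norm{\bm{u}-\bm{z}_{n}}^{2}$ using lower semicontinuity of $\phi$. Hence $\mathcal{M}_{\widebar{\gamma}}(\widebar{\bm{x}})\le\norm{\widebar{\bm{w}}}\le\ell$.

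Case (b), $\gamma_{n}\to0$, is the main obstacle. Here $\bm{p}_{n}=\bm{x}_{n}-\gamma_{n}\bm{w}_{n}\to\widebar{\bm{x}}$. The optimality condition of the prox (Fermat's rule) gives $\bm{w}_{n}-\bm{v}_{n}=(\bm{x}_{n}-\gamma_{n}\bm{v}_{n}-\bm{p}_{n})/\gamma_{n}\in\Fsubdiff\phi(\bm{p}_{n})\subset\Lsubdiff\phi(\bm{p}_{n})$. Assumption~\ref{assumption:phi} then yields $\widebar{\bm{w}}-\widebar{\bm{v}}\in\Lsubdiff\phi(\widebar{\bm{x}})$ once $\widebar{\bm{x}}\in\dom\phi$. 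Consequently $\widebar{\bm{w}}\in\Lsubdiff\cost(\widebar{\bm{x}})+\Lsubdiff\phi(\widebar{\bm{x}})$, and the distance is at most $\norm{\widebar{\bm{w}}}\le\ell$.

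The delicate point is that Assumption~\ref{assumption:phi} involves no convergence $\phi(\bm{p}_{n})\to\phi(\widebar{\bm{x}})$, so I rely on it exactly as stated. I must also check that $\bm{p}_{n}\in\dom\phi$, which holds because $\bm{p}_{n}$ is a prox point, so that the subgradients are nonempty. If I instead had to attain $\phi$-attentive convergence, I would use the prox inequality with $\bm{u}=\widebar{\bm{x}}$ together with lower semicontinuity to get $\phi(\bm{p}_{n})\to\phi(\widebar{\bm{x}})$. Finally I must make sure the near-minimizer choice still yields exact prox points $\bm{p}_{n}$, which it does by construction.
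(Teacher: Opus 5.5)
Your proof is correct, but it is organized differently from the paper's. The paper encodes both choices of $\bm{v}_n$ through $S_n\in\{\Lsubdiff\cost,\ \{\nabla\cost^{\langle\mu_n\rangle}\}\}$ and works with whole sets rather than selections. It uses $\liminf_n\dist(\bm{0},\mathcal{E}_n)=\dist(\bm{0},\Limsup_n\mathcal{E}_n)$ (Fact~\ref{fact:outer}). It then bounds the outer limit of the gradient-mapping sets via a sum rule for outer limits and a lemma on outer limits of images $\mathcal{S}_n(\mathcal{E}_n)$.

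In case (a) this has the same content as your argument. Your near-optimal selection plays the role of Fact~\ref{fact:outer}. The joint outer semicontinuity from \cite[Thm.~1.25]{Rockafellar-Wets98}, or your direct limit in the prox inequality, plays the role of \eqref{eq:gLimsup_prox_positive_gamma}.

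The genuine difference is in case (b). The paper first enlarges the gradient-mapping set to $S_n(\bm{x}_n)+\Lsubdiff\phi(\prox{\gamma_n\phi}(E_n))$. This forgets which prox points actually come with bounded residuals, so it must control \emph{all} of $\prox{\gamma_n\phi}(E_n)$. For that it invokes the external convergence result in Fact~\ref{fact:prox_set_mapping}\ref{enum:fact:prox_set_mapping:eventually_locall_bounded}, which gives eventual boundedness and $\Limsup_n\prox{\gamma_n\phi}(E_n)\subset\{\widebar{\bm{x}}\}$.

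You keep the link between $\bm{p}_n$ and the bounded residual $\bm{w}_n$. So $\bm{p}_n\to\widebar{\bm{x}}$ is immediate from $\norm{\bm{x}_n-\bm{p}_n}=\gamma_n\norm{\bm{w}_n}\to0$, and that lemma is not needed. You use $\widebar{\bm{x}}\in\dom{\phi}$ only to invoke Assumption~\ref{assumption:phi}.

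Your route is shorter and more self-contained. Applied to any convergent subsequence of residuals, it also yields the set-level inclusion \eqref{eq:outer_limit_GM}. The paper's route buys a modular set-valued toolkit whose intermediate statements, such as \eqref{eq:property_prox_E}, can be reused elsewhere.

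One side remark in your proposal, which you do not actually use, would not work. The prox inequality with $\bm{u}=\widebar{\bm{x}}$ does not give $\phi(\bm{p}_n)\to\phi(\widebar{\bm{x}})$ in general, because the resulting upper bound contains $\norm{\widebar{\bm{x}}-\bm{p}_n}^2/(2\gamma_n)$, which need not vanish. For example, take $\phi=\norm{\cdot}_{0}$ on $\mathbb{R}$, $\cost\equiv0$, $x_n=1/n$ and $\gamma_n=1/n^3$. Then $\prox{\gamma_n\phi}(x_n)=\{x_n\}$ for $n\ge3$, while $\phi(x_n)=1\not\to0=\phi(0)$. This is harmless, since Assumption~\ref{assumption:phi} is plain (not $\phi$-attentive) outer semicontinuity, which is exactly what your main argument uses.
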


In order to show Theorem~\ref{theorem:asymptotic_approximation} in a unified way, we consider the following two cases of a sequence
$(S_{n})_{n=1}^{\infty}$
of set-valued mappings
$S_{n}:\mathcal{X} \rightrightarrows \mathcal{X}$
as:
\begin{enumerate}[label=Case (\Roman*),leftmargin=*,align=left]
  \item
        \label{enum:claim:set_convergence:subdifferential}
        Let
        $S_{n}(\bm{x}) \coloneqq \Lsubdiff \cost(\bm{x})\ (n\in\mathbb{N}, \bm{x} \in \mathcal{X})$;
  \item
        \label{enum:claim:set_convergence:gradient}
        Let
        $\{\cost^{\langle \mu \rangle}\}_{\mu\in (0,\widetilde{\mu})}$
        with
        $\widetilde{\mu} \in \exRp$
        be a smoothing function of
        $\cost$,
        and
        $\mu_{n} ( \in (0,\widetilde{\mu})) \searrow 0$.
        For
        $n\in\mathbb{N}$,
        let
        $S_{n}(\bm{x})\coloneqq\{\nabla \cost^{\langle \mu_{n} \rangle}(\bm{x})\}\ (\bm{x}\in \mathrm{int}(\dom{\cost}))$
        and
        $S_{n}(\bm{x})\coloneqq \emptyset\ (\bm{x}\notin \mathrm{int}(\dom{\cost}))$.
\end{enumerate}
More precisely, we can express
$\mathcal{M}_{\gamma_{n}}^{\cost,\phi}(\bm{x}_{n})$
and
$\mathcal{M}_{\gamma_{n}}^{\cost^{\langle \mu_{n} \rangle},\phi}(\bm{x}_{n})$
in Theorem~\ref{theorem:asymptotic_approximation}
by
$\dist\left(\bm{0},\frac{\bm{x}_{n}- \prox{\gamma_{n}\phi}(\bm{x}_{n} - \gamma_{n} S_{n}(\bm{x}_{n}))}{\gamma_{n}}\right)$
with
$(S_{n})_{n=1}^{\infty}$
in~\ref{enum:claim:set_convergence:subdifferential} and~\ref{enum:claim:set_convergence:gradient}
in a unified way.
We first establish the following auxiliary claim, which shows that the two choices of
$(S_{n})_{n=1}^{\infty}$
above share the two properties required in the proof of Theorem~\ref{theorem:asymptotic_approximation}.
\begin{claim}
  \label{claim:S_n}
  Let
  $\cost$
  satisfy Assumption~\ref{assumption:basic}~\ref{enum:problem:origin:cost}.
  Let
  $(\bm{x}_{n})_{n=1}^{\infty} \subset \mathrm{int}(\dom{\cost})$
  converge to some
  $\widebar{\bm{x}} \in \mathrm{int}(\dom{\cost})$.
  Then,
  we have
  (i)
  $\Limsup_{n\to\infty} S_{n}(\bm{x}_{n}) \subset \Lsubdiff \cost(\widebar{\bm{x}})$;
  and
  (ii)
  $(S_{n}(\bm{x}_{n}))_{n=1}^{\infty}$
  is {\em eventually bounded}
  (see Definition~\ref{definition:bounded}~\ref{enum:definition:bounded:eventually_bounded}),
  if
  $S_{n}:\mathcal{X}\rightrightarrows\mathcal{X}\ (n\in\mathbb{N})$
  is given in~\ref{enum:claim:set_convergence:subdifferential} or~\ref{enum:claim:set_convergence:gradient}
  above.
\end{claim}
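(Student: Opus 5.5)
We treat the two cases separately and, in each, verify (i) and then (ii). Eventual boundedness of $(S_{n}(\bm{x}_{n}))_{n=1}^{\infty}$ means that there exist $n_{0}\in\mathbb{N}$ and a bounded set $\mathcal{B}\subset\mathcal{X}$ with $S_{n}(\bm{x}_{n})\subset\mathcal{B}$ for all $n\geq n_{0}$. Throughout we fix $r>0$ with $\overline{B(\widebar{\bm{x}},r)}\subset\mathrm{int}(\dom{\cost})$, and $n_{0}$ with $\bm{x}_{n}\in B(\widebar{\bm{x}},r)$ for $n\geq n_{0}$.

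In \ref{enum:claim:set_convergence:subdifferential}, $S_{n}=\Lsubdiff\cost$. Since $\cost$ is locally Lipschitz on $\mathrm{int}(\dom{\cost})$, it is Lipschitz with some constant $L$ on the compact convex set $\overline{B(\widebar{\bm{x}},r)}$ by \cite[Thm. 9.2]{Rockafellar-Wets98}. Hence $\Lsubdiff\cost(\bm{x})\subset\{\bm{v}\mid\norm{\bm{v}}\leq L\}$ for every $\bm{x}\in B(\widebar{\bm{x}},r)$, using the standard bound $\norm{\bm{v}}\leq\mathrm{lip}\,\cost(\bm{x})$ \cite[Thm. 9.13]{Rockafellar-Wets98}, which is presumably also behind Fact~\ref{fact:subdifferential_nonempty}. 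This gives (ii).

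For (i), take $\bm{v}\in\Limsup_{n}\Lsubdiff\cost(\bm{x}_{n})$, i.e., $\bm{v}_{n}\in\Lsubdiff\cost(\bm{x}_{n})$ with $\bm{v}_{n}\to\bm{v}$ along some $\mathcal{N}\in\Ninfty$. Continuity of $\cost$ at $\widebar{\bm{x}}$ gives $\cost(\bm{x}_{n})\to\cost(\widebar{\bm{x}})$. Outer semicontinuity of $\Lsubdiff\cost$ relative to $\cost$-attentive convergence \cite[Prop. 8.7]{Rockafellar-Wets98} then yields $\bm{v}\in\Lsubdiff\cost(\widebar{\bm{x}})$. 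If we want to avoid citing that result, a diagonal argument works instead. For each $n$, pick $\bm{x}_{n}'$ with $\norm{\bm{x}_{n}'-\bm{x}_{n}}<1/n$, $\abs{\cost(\bm{x}_{n}')-\cost(\bm{x}_{n})}<1/n$, and $\bm{w}_{n}\in\Fsubdiff\cost(\bm{x}_{n}')$ with $\norm{\bm{w}_{n}-\bm{v}_{n}}<1/n$, all directly from Definition~\ref{definition:subdifferential}. Then $\bm{x}_{n}'\to\widebar{\bm{x}}$, $\cost(\bm{x}_{n}')\to\cost(\widebar{\bm{x}})$ and $\bm{w}_{n}\to\bm{v}$, so $\bm{v}\in\Lsubdiff\cost(\widebar{\bm{x}})$ by definition.

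In \ref{enum:claim:set_convergence:gradient}, $S_{n}(\bm{x}_{n})=\{\nabla\cost^{\langle\mu_{n}\rangle}(\bm{x}_{n})\}$, since each $\bm{x}_{n}\in\mathrm{int}(\dom{\cost})$. Property (ii) is exactly Lemma~\ref{lemma:bounded_gradient}: the whole sequence is bounded, so the family is eventually bounded with $n_{0}=1$.

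For (i), a cluster point $\bm{v}=\lim_{\mathcal{N}\ni n\to\infty}\nabla\cost^{\langle\mu_{n}\rangle}(\bm{x}_{n})$ must be shown to lie in $\Lsubdiff\cost(\widebar{\bm{x}})$ via the gradient consistency \eqref{eq:consistency}. The only subtlety is that \eqref{eq:consistency} is phrased with full sequences indexed by $n\in\mathbb{N}$. We therefore enumerate $\mathcal{N}=\{n_{1}<n_{2}<\cdots\}$ and set $\bm{x}_{k}'\coloneqq\bm{x}_{n_{k}}$ and $\mu_{k}'\coloneqq\mu_{n_{k}}$. Then $\mu_{k}'\searrow0$ remains monotone, $(\bm{x}_{k}')\subset\mathrm{int}(\dom{\cost})$ converges to $\widebar{\bm{x}}$, and $\nabla\cost^{\langle\mu_{k}'\rangle}(\bm{x}_{k}')\to\bm{v}$, so the inclusion ``$\supset$'' in \eqref{eq:consistency} gives $\bm{v}\in\Lsubdiff\cost(\widebar{\bm{x}})$.

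No step is a real obstacle, since the heavy lifting is done by Lemma~\ref{lemma:bounded_gradient} and \eqref{eq:consistency}. The step needing the most care is (i) in \ref{enum:claim:set_convergence:subdifferential}. There the $\cost$-attentive requirement in the limiting subdifferential is satisfied only because $\cost$ is continuous on the interior; that is where Assumption~\ref{assumption:basic}~\ref{enum:problem:origin:cost} is used.
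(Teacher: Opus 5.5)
Your proof is correct and follows essentially the paper's route. In Case (I) you use the outer semicontinuity and local boundedness of $\Lsubdiff\cost$ at interior points; the paper packages these as Fact~\ref{fact:subdifferential_nonempty}, resting on the same Rockafellar--Wets results you cite. In Case (II) you use the gradient consistency \eqref{eq:consistency} together with Lemma~\ref{lemma:bounded_gradient}, exactly as the paper does. Your diagonal argument and your explicit re-indexing along $\mathcal{N}$ before invoking \eqref{eq:consistency} are valid unpackings of steps the paper leaves implicit.
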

\begin{proof}[Proof of Claim~\ref{claim:S_n}]
  Consider~\ref{enum:claim:set_convergence:subdifferential}.
  By Fact~\ref{fact:subdifferential_nonempty},
  $\Lsubdiff\cost=S_{n} \ (n\in \mathbb{N})$
  is outer semicontinuous and locally bounded at
  $\widebar{\bm{x}} \in \mathrm{int}(\dom{\cost})$.
  Hence,
  by
  $\lim_{n\to\infty}\bm{x}_{n} = \widebar{\bm{x}}$,
  the outer semicontinuity of
  $\Lsubdiff \cost$
  yields
  $\Limsup_{n\to\infty} S_{n}(\bm{x}_{n}) = \Limsup_{n\to\infty}\Lsubdiff\cost(\bm{x}_{n}) \subset \Lsubdiff \cost(\widebar{\bm{x}})$,
  and the local boundedness of
  $\Lsubdiff \cost$
  implies that
  $(\Lsubdiff \cost(\bm{x}_{n}))_{n=1}^{\infty}=(S_{n}(\bm{x}_{n}))_{n=1}^{\infty}$
  is eventually bounded.
  For~\ref{enum:claim:set_convergence:gradient}, the gradient consistency of
  $\{\cost^{\langle \mu \rangle}\}_{\mu \in (0,\widetilde{\mu})}$
  in~\eqref{eq:consistency}
  gives
  $\Limsup_{n\to\infty} S_{n}(\bm{x}_{n}) = \Limsup_{n\to\infty}\nabla \cost^{\langle \mu_{n} \rangle}(\bm{x}_{n}) \subset \Lsubdiff \cost(\widebar{\bm{x}})$
  and Lemma~\ref{lemma:bounded_gradient} ensures the boundedness of
  $(\nabla\cost^{\langle \mu_{n}\rangle}(\bm{x}_{n}))_{n=1}^{\infty}$,
  i.e.,
  the eventual boundedness of
  $(S_{n}(\bm{x}_{n}))_{n=1}^{\infty}=(\{\nabla\cost^{\langle \mu_{n}\rangle}(\bm{x}_{n})\})_{n=1}^{\infty}$.
\end{proof}

Claim~\ref{claim:set_valued_elementary} below is the main technical step in the proof of Theorem~\ref{theorem:asymptotic_approximation}.
Since our proof of Claim~\ref{claim:set_valued_elementary} is somewhat lengthy, we first state this claim and use it to prove Theorem~\ref{theorem:asymptotic_approximation}.
The proof of Claim~\ref{claim:set_valued_elementary} is then given immediately afterward.

\begin{claim}
  \label{claim:set_valued_elementary}
  Let
  $\cost$
  and
  $\phi$
  satisfy Assumption~\ref{assumption:basic}.
  Suppose that
  $S_{n}:\mathcal{X} \rightrightarrows \mathcal{X}\ (n\in\mathbb{N})$
  satisfies the conditions~(i)
  $\Limsup_{n\to\infty} S_{n}(\bm{x}_{n}) \subset \Lsubdiff \cost(\widebar{\bm{x}})$
  and~(ii)
  the eventual boundedness of
  $(S_{n}(\bm{x}_{n}))_{n=1}^{\infty}$
  (see Claim~\ref{claim:S_n} with~\ref{enum:claim:set_convergence:subdifferential} and~\ref{enum:claim:set_convergence:gradient}).
  Let
  $(\bm{x}_{n})_{n=1}^{\infty} \subset \mathrm{int}(\dom{\cost})$
  converge to some
  $\widebar{\bm{x}} \in \mathrm{int}(\dom{\cost})$,
  and
  $(\gamma_{n})_{n=1}^{\infty} \subset (0,\gamma_{\phi})$
  converge to some
  $\widebar{\gamma} \in [0,\gamma_{\phi})$.
  Then, for
  $E_{n} \coloneqq \bm{x}_{n} - \gamma_{n} S_{n}(\bm{x}_{n}) \subset \mathcal{X} \ (n\in\mathbb{N})$,
  the following hold:
  \begin{enumerate}[label=(\alph*)]
    \item
          \label{enum:claim:set_convergence:gamma_positive}
          If
          $\widebar{\gamma} \in (0,\gamma_{\phi})$,
          then
          $\liminf_{n\to\infty} \dist\left(\bm{0},\frac{\bm{x}_{n}- \prox{\gamma_{n}\phi}(E_{n})}{\gamma_{n}}\right) \geq \mathcal{M}_{\widebar{\gamma}}^{\cost,\phi}(\widebar{\bm{x}})$
          holds.
    \item
          \label{enum:claim:set_convergence:gamma_zero}
          If
          $\widebar{\gamma} = 0$,
          $\widebar{\bm{x}} \in \dom{\phi} (\subset \mathrm{int}(\dom{\cost}))$,
          and
          $\phi$
          satisfies Assumption~\ref{assumption:phi},
          then
          $\liminf_{n\to\infty} \dist\left(\bm{0},\frac{\bm{x}_{n}- \prox{\gamma_{n}\phi}(E_{n})}{\gamma_{n}}\right) \geq \dist(\bm{0},\Lsubdiff \cost(\widebar{\bm{x}}) + \Lsubdiff \phi(\widebar{\bm{x}}))$
          holds.
  \end{enumerate}
\end{claim}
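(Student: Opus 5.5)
Both parts go through a subsequence extraction. We pass to a subsequence $\mathcal{N}\in\Ninfty$ along which the $\liminf$ on the left-hand side is attained. If that $\liminf$ is $+\infty$, there is nothing to prove, so we assume it is finite. For each $n\in\mathcal{N}$ we pick $\bm{v}_{n}\in S_{n}(\bm{x}_{n})$ and $\bm{p}_{n}\in\prox{\gamma_{n}\phi}(\bm{x}_{n}-\gamma_{n}\bm{v}_{n})$ with $\norm{(\bm{x}_{n}-\bm{p}_{n})/\gamma_{n}}\le \dist(\bm{0},(\bm{x}_{n}-\prox{\gamma_{n}\phi}(E_{n}))/\gamma_{n})+1/n$. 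Such near-minimizers exist by the definition of the infimum. By (ii), $(\bm{v}_{n})$ is bounded, and by (i) we may pass to a further subsequence with $\bm{v}_{n}\to\widebar{\bm{v}}\in\Lsubdiff\cost(\widebar{\bm{x}})$. The quotients $\bm{w}_{n}\coloneqq(\bm{x}_{n}-\bm{p}_{n})/\gamma_{n}$ are bounded because their norms approach a finite $\liminf$. Passing once more to a subsequence, $\bm{w}_{n}\to\widebar{\bm{w}}$, and $\norm{\widebar{\bm{w}}}$ equals the left-hand $\liminf$. It therefore suffices, in each case, to show that $\widebar{\bm{w}}$ belongs to the relevant set on the right-hand side.

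For (a), $\widebar{\gamma}>0$, so $\bm{p}_{n}=\bm{x}_{n}-\gamma_{n}\bm{w}_{n}\to\widebar{\bm{p}}\coloneqq\widebar{\bm{x}}-\widebar{\gamma}\widebar{\bm{w}}$. The arguments also converge: $\bm{x}_{n}-\gamma_{n}\bm{v}_{n}\to\widebar{\bm{x}}-\widebar{\gamma}\widebar{\bm{v}}$. We now invoke the outer semicontinuity of the proximal mapping jointly in $(\bm{z},\gamma)$ for $\gamma\in(0,\gamma_{\phi})$, which is Fact~\ref{fact:prox_set_mapping} or \cite[Thm. 1.25]{Rockafellar-Wets98}. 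This gives $\widebar{\bm{p}}\in\prox{\widebar{\gamma}\phi}(\widebar{\bm{x}}-\widebar{\gamma}\widebar{\bm{v}})$. Hence $\widebar{\bm{w}}=(\widebar{\bm{x}}-\widebar{\bm{p}})/\widebar{\gamma}$ is admissible in \eqref{eq:measure_limiting}, and $\mathcal{M}_{\widebar{\gamma}}^{\cost,\phi}(\widebar{\bm{x}})\le\norm{\widebar{\bm{w}}}$.

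Joint outer semicontinuity may only be stated in the appendix for a fixed $\gamma$. In that case we prove it directly. Let $\bm{z}_{n}\coloneqq \bm{x}_{n}-\gamma_{n}\bm{v}_{n}$. For every $\bm{x}$, optimality of $\bm{p}_{n}$ gives $\phi(\bm{p}_{n})+\frac{1}{2\gamma_{n}}\norm{\bm{p}_{n}-\bm{z}_{n}}^{2}\le\phi(\bm{x})+\frac{1}{2\gamma_{n}}\norm{\bm{x}-\bm{z}_{n}}^{2}$. We take $\liminf$ on the left, using lower semicontinuity of $\phi$, and the limit on the right, using continuity of the quadratic term since $\gamma_{n}\to\widebar{\gamma}>0$. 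This is routine.

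For (b), $\widebar{\gamma}=0$. The optimality of $\bm{p}_{n}$ for the proximal subproblem gives, by Fermat's rule and the sum rule with a smooth quadratic \cite[Thm. 10.1, Exe. 8.8]{Rockafellar-Wets98},
\[
\bm{w}_{n}-\bm{v}_{n}=\frac{\bm{x}_{n}-\gamma_{n}\bm{v}_{n}-\bm{p}_{n}}{\gamma_{n}}\in\Fsubdiff\phi(\bm{p}_{n})\subset\Lsubdiff\phi(\bm{p}_{n}).
\]
Also $\bm{p}_{n}=\bm{x}_{n}-\gamma_{n}\bm{w}_{n}\to\widebar{\bm{x}}$, since $\gamma_{n}\to0$ and $(\bm{w}_{n})$ is bounded. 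Assumption~\ref{assumption:phi} then yields $\widebar{\bm{w}}-\widebar{\bm{v}}\in\Lsubdiff\phi(\widebar{\bm{x}})$, so $\widebar{\bm{w}}\in\Lsubdiff\cost(\widebar{\bm{x}})+\Lsubdiff\phi(\widebar{\bm{x}})$. The claimed inequality follows.

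This step is exactly why Assumption~\ref{assumption:phi} is needed. Without it, one would have to verify $\phi(\bm{p}_{n})\to\phi(\widebar{\bm{x}})$ in order to use the definition of $\Lsubdiff\phi$ directly. That convergence could be derived by comparing with $\bm{x}=\widebar{\bm{x}}$ in the prox inequality, but it is not needed here.

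The main obstacle is the bookkeeping of the three successive subsequence extractions together with the finiteness reduction, and, in (a), securing outer semicontinuity of the proximal mapping when both the argument and $\gamma$ vary.
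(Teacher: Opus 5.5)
Your proof is correct, but it follows a different and more elementary route than the paper's.

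\emph{The paper's route.} The paper works with outer limits of sets. It first proves $\Limsup_{n\to\infty}E_{n}\subset\widebar{\bm{x}}-\widebar{\gamma}\Lsubdiff\cost(\widebar{\bm{x}})$ and that $(E_{n})_{n=1}^{\infty}$ is eventually bounded.
\begin{itemize}
\item For (a), it combines Lemma~\ref{lemma:convergence_mapping_set}, the graphical outer limit \eqref{eq:gLimsup_prox_positive_gamma}, and Fact~\ref{fact:outer}, plus the footnote relating $\liminf_{n}\gamma_{n}a_{n}$ to $\widebar{\gamma}\liminf_{n}a_{n}$.
\item For (b), it embeds the gradient-mapping set into $S_{n}(\bm{x}_{n})+\Lsubdiff\phi(\prox{\gamma_{n}\phi}(E_{n}))$ and splits the outer limit with Lemma~\ref{lemma:sum_rule_outer_limit}. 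Because this decouples the points, it must then show that \emph{every} element of $\prox{\gamma_{n}\phi}(E_{n})$ tends to $\widebar{\bm{x}}$. That costs the external result \cite[Lemma 2.3]{Perez-Aroz-Torregrosa-Belen25} behind \eqref{eq:gLimsup_prox_zero_gamma}, and a separate proof that $(\prox{\gamma_{n}\phi}(E_{n}))_{n=1}^{\infty}$ is eventually bounded.
\end{itemize}

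\emph{What your route changes.} Selecting near-optimal pairs $(\bm{v}_{n},\bm{p}_{n})$ keeps $\bm{p}_{n}$ tied to the quotient $\bm{w}_{n}$. After you reduce to a finite $\liminf$, the convergence $\bm{p}_{n}=\bm{x}_{n}-\gamma_{n}\bm{w}_{n}\to\widebar{\bm{x}}$ comes for free, so you bypass that lemma entirely. This is a real simplification of (b).

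In (a), your lower-semicontinuity argument is the standard proof of the joint outer semicontinuity. Fact~\ref{fact:prox_set_mapping} already states this with varying $\gamma_{n}$ in \eqref{eq:gLimsup_prox_positive_gamma}, not only for a fixed $\gamma$.

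In exchange, the paper's route gives set-level information that does not depend on any selection. Examples are \eqref{eq:outer_limit_GM} for all cluster points of the gradient-mapping sets, and $\Limsup_{n\to\infty}\prox{\gamma_{n}\phi}(E_{n})\subset\prox{\widebar{\gamma}\phi}(\widebar{\bm{x}}-\widebar{\gamma}\Lsubdiff\cost(\widebar{\bm{x}}))$.

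\emph{A false side remark.} One statement in your closing remark is wrong, though your proof never uses it. Testing the prox inequality with $\bm{x}=\widebar{\bm{x}}$ does not yield $\phi(\bm{p}_{n})\to\phi(\widebar{\bm{x}})$. The resulting upper bound on $\phi(\bm{p}_{n})$ contains $\norm{\bm{x}_{n}-\widebar{\bm{x}}}^{2}/(2\gamma_{n})$, which need not vanish when $\gamma_{n}\to0$ quickly.

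Here is a counterexample on $\mathbb{R}$.
\begin{itemize}
\item Take $\cost\equiv0$, and let $\phi(t)=\max\{t,-1\}$ for $t\le0$ and $\phi(t)=1$ for $t>0$. This $\phi$ is proper, lsc, and bounded below.
\item Take $x_{n}=1/n$ and $\gamma_{n}=n^{-3}$. For $n\ge3$ one has $\prox{\gamma_{n}\phi}(x_{n})=\{x_{n}\}$, so $w_{n}=0$ and $\phi(p_{n})=1\not\to0=\phi(0)$.
\item Meanwhile $\Lsubdiff\phi(0)=[1,+\infty)$. So the $\liminf$ in (b) is $0$ while the right-hand side is $1$.
\end{itemize}
Assumption~\ref{assumption:phi} fails here, since $\Lsubdiff\phi(1/n)=\{0\}$. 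So that assumption is genuinely indispensable, not merely a shortcut around verifying $\phi$-attentive convergence.
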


\begin{proof}[Proof of Theorem~\ref{theorem:asymptotic_approximation}]
  Recall that
  $(S_{n})_{n=1}^{\infty}$
  in~\ref{enum:claim:set_convergence:subdifferential} or~\ref{enum:claim:set_convergence:gradient}
  satisfies the conditions (i) and (ii) in Claim~\ref{claim:set_valued_elementary} by Claim~\ref{claim:S_n}.

  \ref{enum:theorem:asymptotic_approximation:positive_gamma}
  The first inequality in~\eqref{eq:asymptotic_measure_positive} is verified by Claim~\ref{claim:set_valued_elementary}~\ref{enum:claim:set_convergence:gamma_positive} under~\ref{enum:claim:set_convergence:subdifferential}.
  The second one is also verified by Claim~\ref{claim:set_valued_elementary}~\ref{enum:claim:set_convergence:gamma_positive} under~\ref{enum:claim:set_convergence:gradient}.

  \ref{enum:theorem:asymptotic_approximation:zero_gamma}
  In a similar way to~\ref{enum:theorem:asymptotic_approximation:positive_gamma}, we get the two inequalities in~\eqref{eq:asymptotic_measure_zero}
  by applying Claim~\ref{claim:set_valued_elementary}~\ref{enum:claim:set_convergence:gamma_zero} under~\ref{enum:claim:set_convergence:subdifferential}
  and~\ref{enum:claim:set_convergence:gradient} respectively.
\end{proof}

Here, we show Claim~\ref{claim:set_valued_elementary} by using facts on set-valued analysis in~\ref{sec:appendix:set_valued}.
\begin{proof}[Proof of Claim~\ref{claim:set_valued_elementary}]
  Since we will use the following claim~\eqref{eq:property_E} repeatedly in the proofs of Claim~\ref{claim:set_valued_elementary}~\ref{enum:claim:set_convergence:gamma_positive} and~\ref{enum:claim:set_convergence:gamma_zero},
  we begin with showing
  \begin{equation}
    \Limsup_{n\to\infty}
    E_{n} \subset E \coloneqq \widebar{\bm{x}} - \widebar{\gamma}\Lsubdiff \cost(\widebar{\bm{x}}) \subset \mathcal{X},
    \ \mathrm{and}\
    (E_{n})_{n=1}^{\infty}
    \mathrm{\ is\ eventually\ bounded}.
    \label{eq:property_E}
  \end{equation}

  Recall that
  $r_{\bm{x}}\coloneqq \sup_{n\in\mathbb{N}}\norm{\bm{x}_{n}} < +\infty$
  by
  $\lim_{n\to\infty}\bm{x}_{n} = \widebar{\bm{x}}$,
  $\widetilde{\gamma}\coloneqq \sup_{n\in\mathbb{N}}\gamma_{n} < +\infty$
  by
  $\lim_{n\to\infty} \gamma_{n} = \widebar{\gamma}$,
  and
  $(S_{n}(\bm{x}_{n}))_{n=1}^{\infty}$
  is eventually bounded by the condition~(ii), i.e., there exist
  $n_{0}\in \mathbb{N}$
  and
  $r_{S} \in \mathbb{R}_{++}$
  such that
  $\bigcup_{n\geq n_{0}} S_{n}(\bm{x}_{n}) \subset B(\bm{0}, r_{S})$.
  For every
  $n\geq n_{0}$
  and
  $\bm{v}_{n} \in E_{n}=\bm{x}_{n}-\gamma_{n}S_{n}(\bm{x}_{n})$,
  since there exists
  $\bm{u}_{n} \in S_{n}(\bm{x}_{n})$
  such that
  $\bm{v}_{n} = \bm{x}_{n} - \gamma_{n}\bm{u}_{n}$,
  we get
  $\norm{\bm{v}_{n}} \leq \norm{\bm{x}_{n}} + \gamma_{n}\norm{\bm{u}_{n}} < r_{\bm{x}} + \widetilde{\gamma}r_{S}$.
  Hence,
  $\bigcup_{n\geq n_{0}}E_{n} \subset B(\bm{0}, r_{\bm{x}} + \widetilde{\gamma}r_{S})$
  holds, i.e.,
  $(E_{n})_{n=1}^{\infty}$
  is eventually bounded.

  To establish
  $\Limsup_{n\to\infty} E_{n} \subset E$,
  let
  $\bm{v} \in \Limsup_{n\to\infty}E_{n}$.
  Then, there exist
  $\mathcal{N} \in \Ninfty$
  and
  $\bm{u}_{n}\in S_{n}(\bm{x}_{n})\ (n\in\mathcal{N})$
  such that
  $\bm{v} = \lim_{\mathcal{N}\ni n\to\infty}(\bm{x}_{n} - \gamma_{n}\bm{u}_{n})$.
  Since
  $(S_{n}(\bm{x}_{n}))_{n=1}^{\infty}$
  is eventually bounded,
  $(\bm{u}_{n})_{n\in\mathcal{N}}$
  is bounded.
  We can assume that
  $(\bm{u}_{n})_{n\in\mathcal{N}}$
  converges to some
  $\widebar{\bm{u}} \in \Limsup_{n\to\infty} S_{n}(\bm{x}_{n})$
  (by passing through further subsequence of
  $(\bm{u}_{n})_{n\in\mathcal{N}}$
    if necessary).
    By
  $\lim_{n\to\infty} \bm{x}_{n} = \widebar{\bm{x}}$
    and
  $\lim_{n\to\infty} \gamma_{n} = \widebar{\gamma}$,
    we get
  $\bm{v}=\lim_{\mathcal{N}\ni n\to\infty} (\bm{x}_{n} - \gamma_{n}\bm{u}_{n}) = \widebar{\bm{x}} - \widebar{\gamma} \widebar{\bm{u}} \in \widebar{\bm{x}} - \widebar{\gamma}\Limsup_{n\to\infty} S_{n}(\bm{x}_{n})$.
    Hence,
    the condition~(i)
  $\Limsup_{n\to\infty} S_{n}(\bm{x}_{n}) \subset \Lsubdiff \cost(\widebar{\bm{x}})$
    gives
  $\bm{v} \in \widebar{\bm{x}} - \widebar{\gamma}\Limsup_{n\to\infty} S_{n}(\bm{x}_{n}) \subset \widebar{\bm{x}} - \widebar{\gamma}\Lsubdiff \cost(\widebar{\bm{x}}) = E$,
    implying thus
  $\Limsup_{n\to\infty} E_{n} \subset E$.

    \ref{enum:claim:set_convergence:gamma_positive}
    Suppose
  $\widebar{\gamma} \in (0,\gamma_{\phi})$.
    We get
  $\Limsup_{n\to \infty} \prox{\gamma_{n}\phi}(E_{n}) \subset \prox{\widebar{\gamma}\phi}(E)$
    from
    Lemma~\ref{lemma:convergence_mapping_set}
    by letting
  $(\mathcal{S}, (\mathcal{S}_{n})_{n=1}^{\infty}, \mathcal{E},(\mathcal{E}_{n})_{n=1}^{\infty}) \coloneqq (\prox{\widebar{\gamma}\phi}, (\prox{\gamma_{n}\phi})_{n=1}^{\infty}, E, (E_{n})_{n=1}^{\infty})$
    with~\eqref{eq:property_E} and~\eqref{eq:gLimsup_prox_positive_gamma} in Fact~\ref{fact:prox_set_mapping}~\ref{enum:fact:prox_set_mapping:eventually_locall_bounded}.
    Then, we get the inclusion
  $\Limsup_{n\to\infty}\left(\bm{x}_{n}- \prox{\gamma_{n}\phi}(E_{n})\right) = \widebar{\bm{x}}- \Limsup_{n\to\infty} \prox{\gamma_{n}\phi}(E_{n}) \subset \widebar{\bm{x}}- \prox{\widebar{\gamma}\phi}(E)$,
    where the equality follows from
  $\lim_{n\to\infty}\bm{x}_{n} = \widebar{\bm{x}}$.
    By this inclusion, we get
    \begin{align}
       & \liminf_{n\to\infty} \gamma_{n}\dist\left(\bm{0},\frac{\bm{x}_{n}-\prox{\gamma_{n}\phi}(E_{n})}{\gamma_{n}}\right)
      =
      \liminf_{n\to\infty}\dist(\bm{0}, \bm{x}_{n}-\prox{\gamma_{n}\phi}(E_{n})) \\
       & \overset{\rm Fact~\ref{fact:outer}}{=}\dist\left(\bm{0},\Limsup_{n\to\infty}\left(\bm{x}_{n}-  \prox{\gamma_{n}\phi}(E_{n})\right)\right)
      \geq \dist\left(\bm{0},\widebar{\bm{x}}- \prox{\widebar{\gamma}\phi}(E)\right)                                                                                      \nonumber \\
       & = \widebar{\gamma} \dist\left(\bm{0},\frac{\widebar{\bm{x}}-\prox{\widebar{\gamma}\phi}(E)}{\widebar{\gamma}}\right)
      = \widebar{\gamma}\mathcal{M}_{\widebar{\gamma}}^{\cost,\phi}(\widebar{\bm{x}}).  \label{eq:asymptotic_measure_positive_gamma}
    \end{align}
    Recall
  $(\gamma_{n})_{n=1}^{\infty} \subset \mathbb{R}_{++}$,
  $\lim_{n\to\infty}\gamma_{n}=\widebar{\gamma} > 0$
    and
  $\dist\left(\bm{0},\frac{\bm{x}_{n}-\prox{\gamma_{n}\phi}(E_{n})}{\gamma_{n}}\right) \geq 0\ (n\in\mathbb{N})$.
    Then,
    we get\footnote{
  $\widebar{\gamma}\liminf_{n\to\infty}a_{n} = \liminf_{n\to\infty} (\gamma_{n}a_{n})$
    holds
    with
  $a_{n} \coloneqq \dist\left(\bm{0},\frac{\bm{x}_{n}-\prox{\gamma_{n}\phi}(E_{n})}{\gamma_{n}}\right) \geq 0$
    as follows.
    For any
  $\epsilon \in(0,\widebar{\gamma})$,
    there exists
  $n_{\epsilon} \in \mathbb{N}$
    such that
  $0 < \widebar{\gamma}-\epsilon < \gamma_{n} < \widebar{\gamma}+\epsilon\ (\forall n\geq n_{\epsilon})$.
    Then, we have
  $0 \leq (\widebar{\gamma}-\epsilon)a_{n} \leq \gamma_{n}a_{n} \leq (\widebar{\gamma}+\epsilon)a_{n}\ (\forall n\geq n_{\epsilon})$.
    By taking the limit infimum, we get
  $(\widebar{\gamma}-\epsilon)\liminf_{n\to\infty}a_{n} \leq \liminf_{n\to\infty}(\gamma_{n}a_{n}) \leq (\widebar{\gamma}+\epsilon) \liminf_{n\to\infty}a_{n}$.
    Since
  $\epsilon \in (0,\widebar{\gamma})$
    is chosen arbitrarily, we have
  $\widebar{\gamma}\liminf_{n\to\infty}a_{n} = \liminf_{n\to\infty} (\gamma_{n}a_{n})$.
    }
  $\liminf\limits_{n\to\infty} \gamma_{n}\dist\left(\bm{0},\frac{\bm{x}_{n}-\prox{\gamma_{n}\phi}(E_{n})}{\gamma_{n}}\right) = \widebar{\gamma} \liminf\limits_{n\to\infty}\dist\left(\bm{0},\frac{\bm{x}_{n}-\prox{\gamma_{n}\phi}(E_{n})}{\gamma_{n}}\right)$.
    Hence, we deduce
  $\widebar{\gamma} \liminf_{n\to\infty}\dist\left(\bm{0},\frac{\bm{x}_{n}-\prox{\gamma_{n}\phi}(E_{n})}{\gamma_{n}}\right) \geq \widebar{\gamma}\mathcal{M}_{\widebar{\gamma}}^{\cost,\phi}(\widebar{\bm{x}})$
    by~\eqref{eq:asymptotic_measure_positive_gamma}.
    By dividing both sides by
  $\widebar{\gamma} > 0$,
    we get the desired inequality.

    \ref{enum:claim:set_convergence:gamma_zero}
    Suppose
  $\widebar{\gamma} = 0$,
  $\widebar{\bm{x}} \in \dom{\phi}$,
    and Assumption~\ref{assumption:phi}.
    It suffices to show
    \begin{equation}
      \Limsup_{n\to\infty} \left(\frac{\bm{x}_{n}- \prox{\gamma_{n}\phi}(E_{n})}{\gamma_{n}}\right) \subset \Lsubdiff \cost(\widebar{\bm{x}}) + \Lsubdiff \phi(\widebar{\bm{x}}) \label{eq:outer_limit_GM}
    \end{equation}
    with
  $E_{n} =\bm{x}_{n} - \gamma_{n}
  S_{n}(\bm{x}_{n})$
    because we have
  $\liminf\limits_{n\to\infty} \dist\left(\bm{0},\frac{\bm{x}_{n}- \prox{\gamma_{n}\phi}(E_{n})}{\gamma_{n}}\right) \overset{\rm Fact~\ref{fact:outer}}{=}$
  $\dist\!
  \left(\bm{0}, \Limsup\limits_{n\to\infty}\left(\frac{\bm{x}_{n}- \prox{\gamma_{n}\phi}(E_{n})}{\gamma_{n}}\right)\right) \overset{\eqref{eq:outer_limit_GM}}{\geq} \dist(\bm{0}, \Lsubdiff \cost(\widebar{\bm{x}}) + \Lsubdiff\phi(\widebar{\bm{x}}))$.

    In what follows, we show~\eqref{eq:outer_limit_GM}.
    For every
  $n\in\mathbb{N}$,
  $\bm{v}_{n} \in S_{n}(\bm{x}_{n})$,
    and
  $\bm{p}_{n}\in \prox{\gamma_{n}\phi}(\bm{x}_{n}-\gamma_{n}\bm{v}_{n})$,
    we have
  $\bm{w}_{n}\coloneqq\frac{\bm{x}_{n} - \gamma_{n}\bm{v}_{n} - \bm{p}_{n}}{\gamma_{n}} \in \Lsubdiff \phi(\bm{p}_{n})$
    from Fact~\ref{fact:prox_set_mapping}~\ref{enum:fact:prox_set_mapping:nonempty_resolvent}.
    We get
    \begin{align}
       & \frac{\bm{x}_{n}- \prox{\gamma_{n}\phi}(E_{n})}{\gamma_{n}}
      = \left\{\frac{\bm{x}_{n}-\bm{p}_{n}}{\gamma_{n}}\mid \bm{v}_{n} \in S_{n}(\bm{x}_{n}),\ \bm{p}_{n}\in \prox{\gamma_{n}\phi}(\bm{x}_{n}-\gamma_{n}\bm{v}_{n}) \right\} \\
       & = \left\{\bm{v}_{n} + \frac{\bm{x}_{n} - \gamma_{n}\bm{v}_{n}-\bm{p}_{n}}{\gamma_{n}}\mid \bm{v}_{n} \in S_{n}(\bm{x}_{n}),\ \bm{p}_{n}\in \prox{\gamma_{n}\phi}(\bm{x}_{n}-\gamma_{n}\bm{v}_{n}) \right\} \nonumber \\
       & \subset \left\{\bm{v}_{n} + \bm{w}_{n} \mid \bm{v}_{n} \in S_{n}(\bm{x}_{n}),\ \bm{p}_{n}\in \prox{\gamma_{n}\phi}(\bm{x}_{n}-\gamma_{n}\bm{v}_{n}),\ \bm{w}_{n} \in \Lsubdiff \phi(\bm{p}_{n})\right\}                              \nonumber \\
       & = \left\{\bm{v}_{n} + \bm{w}_{n}\mid \bm{v}_{n} \in S_{n}(\bm{x}_{n}),\ \bm{w}_{n} \in \Lsubdiff \phi(\prox{\gamma_{n}\phi}(\bm{x}_{n}-\gamma_{n}\bm{v}_{n}))\right\} \nonumber \\
       & \subset S_{n}(\bm{x}_{n}) + \Lsubdiff \phi
      \left(\prox{\gamma_{n}\phi}\left(\bm{x}_{n} - \gamma_{n}
      S_{n}(\bm{x}_{n})\right)\right)
      = S_{n}(\bm{x}_{n}) + \Lsubdiff \phi
      \left(\prox{\gamma_{n}\phi}(E_{n})\right).
      \label{eq:measure_inequality}
    \end{align}
    By taking the outer limit of~\eqref{eq:measure_inequality}, we have
    \begin{align}
      \Limsup_{n\to\infty} \left(\frac{\bm{x}_{n}- \prox{\gamma_{n}\phi}(E_{n})}{\gamma_{n}}\right)
       & \subset \Limsup_{n\to\infty}\left(S_{n}(\bm{x}_{n}) + \Lsubdiff \phi\left( \prox{\gamma_{n}\phi}(E_{n})\right)\right) \\
       & \subset \Limsup_{n\to\infty}
      S_{n}(\bm{x}_{n}) + \Limsup_{n\to\infty}\Lsubdiff \phi \left(\prox{\gamma_{n}\phi}(E_{n})\right) \nonumber \\
       & \subset \Lsubdiff \cost(\widebar{\bm{x}}) + \Limsup_{n\to\infty}\Lsubdiff \phi\left( \prox{\gamma_{n}\phi}(E_{n})\right),
      \label{eq:sum_set_valued_outer_limit}
    \end{align}
    where the second inclusion holds by Lemma~\ref{lemma:sum_rule_outer_limit} with
    the eventual boundedness of
  $(S_{n}(\bm{x}_{n}))_{n=1}^{\infty}$
    (see the condition~(ii)),
    and the last one holds by the condition (i):
  $\Limsup_{n\to\infty}S_{n}(\bm{x}_{n}) \subset \Lsubdiff \cost(\widebar{\bm{x}})$.

    In view of~\eqref{eq:sum_set_valued_outer_limit},
    it remains to show
  $\Limsup_{n\to\infty} \Lsubdiff \phi(\prox{\gamma_{n}\phi}(E_{n})) \subset \Lsubdiff \phi(\widebar{\bm{x}})$
    in order to establish the inclusion~\eqref{eq:outer_limit_GM}.
    To this end, the next paragraph shows
    \begin{equation}
      \Limsup_{n\to\infty}\prox{\gamma_{n}\phi}(E_{n}) \subset  \{\widebar{\bm{x}}\},
      \ \mathrm{and}\
      (\prox{\gamma_{n}\phi}(E_{n}))_{n=1}^{\infty}
      \mathrm{\ is\ eventually\ bounded}.
      \label{eq:property_prox_E}
    \end{equation}
    Indeed, the inclusion
  $\Limsup_{n\to\infty} \Lsubdiff \phi(\prox{\gamma_{n}\phi}(E_{n})) \subset \Lsubdiff \phi(\widebar{\bm{x}})$
    can be verified by applying Lemma~\ref{lemma:convergence_mapping_set} with
  $(\mathcal{S}, (\mathcal{S}_{n})_{n=1}^{\infty}, \mathcal{E},(\mathcal{E}_{n})_{n=1}^{\infty}) \coloneqq (\Lsubdiff \phi, (\Lsubdiff \phi)_{n=1}^{\infty}, \{\widebar{\bm{x}}\},\allowbreak (\prox{\gamma_{n}\phi}(E_{n}))_{n=1}^{\infty})$
    because the assumptions in Lemma~\ref{lemma:convergence_mapping_set} are satisfied by~\eqref{eq:property_prox_E}
    and the outer semicontinuity of
  $\Lsubdiff \phi$
    at
  $\widebar{\bm{x}} \in \dom{\phi}$
    (see Assumption~\ref{assumption:phi}).

    The inclusion
  $\Limsup_{n\to\infty}\prox{\gamma_{n}\phi}(E_{n}) \subset \{\widebar{\bm{x}}\}(= \widebar{\bm{x}} - \widebar{\gamma}\Lsubdiff \cost(\widebar{\bm{x}})=E)$
    in~\eqref{eq:property_prox_E}
    follows from
    Lemma~\ref{lemma:convergence_mapping_set}
    by letting
  $(\mathcal{S}, (\mathcal{S}_{n})_{n=1}^{\infty}, \mathcal{E},(\mathcal{E}_{n})_{n=1}^{\infty}) \coloneqq (\Id, (\prox{\gamma_{n}\phi})_{n=1}^{\infty}, \{\widebar{\bm{x}}\}, (E_{n})_{n=1}^{\infty})$
    with~\eqref{eq:property_E} and \eqref{eq:gLimsup_prox_zero_gamma} in Fact~\ref{fact:prox_set_mapping}~\ref{enum:fact:prox_set_mapping:eventually_locall_bounded}.
    Suppose, to the contrary, that
  $(\prox{\gamma_{n}\phi}(E_{n}))_{n=1}^{\infty}$
    is not eventually bounded, i.e.,
    there exist
  $\mathcal{N} \in \Ninfty$
    and
  $(\bm{p}_{n})_{n=1}^{\infty}\subset \mathcal{X}$
    satisfying
  $\bm{p}_{n} \in \prox{\gamma_{n}\phi}(\bm{u}_{n}) \subset \prox{\gamma_{n}\phi}(E_{n})\ (n\in\mathcal{N})$
    with some
  $\bm{u}_{n} \in E_{n}$
    and
  $\lim_{\mathcal{N} \ni n\to\infty} \norm{\bm{p}_{n}} = +\infty$.
    Since
  $(E_{n})_{n=1}^{\infty}$
    is eventually bounded by
  $\eqref{eq:property_E}$,
  $(\bm{u}_{n})_{n \in \mathcal{N}}$
    is a bounded sequence.
    From
  $\Limsup_{n\to\infty}E_{n} \subset E = \{\widebar{\bm{x}}\}$
    (see~\eqref{eq:property_E}),
  $\widebar{\bm{x}}$
    is the unique cluster point of a bounded sequence
  $(\bm{u}_{n})_{n\in\mathcal{N}}$,
    from which
  $\lim_{\mathcal{N}\ni n\to\infty} \bm{u}_{n} = \widebar{\bm{x}}$
    is deduced (see, e.g.,~\cite[Lemma 1.14]{Bauschke-Combettes17}).
    Together with
  $\bm{p}_{n} \in \prox{\gamma_{n}\phi}(\bm{u}_{n})\ (n\in\mathcal{N})$
    and
  $\lim_{n\to\infty} \gamma_{n} = \widebar{\gamma} = 0$,
    Fact~\ref{fact:prox_set_mapping}~\ref{enum:fact:prox_set_mapping:eventually_locall_bounded}
    ensures
  $\lim_{\mathcal{N}\ni n\to\infty}\bm{p}_{n}= \lim_{\mathcal{N}\ni n\to\infty}\bm{u}_{n}=\widebar{\bm{x}}$,
    which contradicts
  $\lim_{\mathcal{N} \ni n\to\infty} \norm{\bm{p}_{n}} = +\infty$.
\end{proof}

\begin{remark}[Theoretical obstacle in the absence of the convexity of $\phi$]
  \label{remark:obstacle}
  Under the convexity of
  $\phi$
  and a smoothing function
  $\{\cost^{\langle \mu \rangle}\}_{\mu \in (0,\widetilde{\mu})}$
  in Example~\ref{example:cost},
  our recent paper~\cite{Kume-Yamada25B} establishes the following inequality with an arbitrarily chosen
  $\widetilde{\gamma} \in \mathbb{R}_{++}$:
  \begin{equation}
    \left(\widebar{\bm{x}} \in \mathcal{X} \right) \quad
    \mathcal{M}_{\widetilde{\gamma}}^{\cost,\phi}(\widebar{\bm{x}}) \leq
    \liminf\limits_{n\to\infty} \mathcal{M}_{\widetilde{\gamma}}^{\cost^{\langle \mu_{n} \rangle},\phi}(\bm{x}_{n})
    \label{eq:asymptotic_measure_positive_2}
  \end{equation}
  for
  $\bm{x}_{n} \to \widebar{\bm{x}} \in \mathcal{X}$
  and
  $\mu_{n} \searrow 0$
  as
  $n\to\infty$.
  By combining~\eqref{eq:asymptotic_measure_positive_2} with a monotonicity property (see, e.g.,~\cite[Thm. 10.9]{Beck17} or~\cite[Lemma 6]{Liu-Xia24}) under the convexity of
  $\phi$:
  \begin{equation}
    (0 < \gamma \leq \widetilde{\gamma},\ \mu \in (0,\widetilde{\mu}),\ \bm{x}\in \mathcal{X}) \quad
    \mathcal{M}_{\widetilde{\gamma}}^{\cost^{\langle \mu \rangle},\phi}(\bm{x})
    \leq
    \mathcal{M}_{\gamma}^{\cost^{\langle \mu \rangle},\phi}(\bm{x}), \label{eq:stepsize_monotonicity}
  \end{equation}
  we get an inequality with variable stepsizes analogous to Theorem~\ref{theorem:asymptotic_approximation}~\ref{enum:theorem:asymptotic_approximation:positive_gamma} as
  $\mathcal{M}_{\widetilde{\gamma}}^{\cost,\phi}(\widebar{\bm{x}}) \leq \liminf_{n\to\infty} \mathcal{M}_{\gamma_{n}}^{\cost^{\langle \mu_{n} \rangle},\phi}(\bm{x}_{n})\
    (\widebar{\bm{x}} \in \mathcal{X} ) $
  for
  $\gamma_{n} \in (0,\widetilde{\gamma}]\ (n\in\mathbb{N})$,
  and
  $\bm{x}_{n} \to \widebar{\bm{x}} \in \mathcal{X}$,
  and
  $\mu_{n} \searrow 0$
  as
  $n\to\infty$.
  However,
  $\phi$
  is not assumed to be convex in this paper.
  Hence,
  the above monotonicity property in~\eqref{eq:stepsize_monotonicity} is not available in general.
  Instead,
  our proof of Theorem~\ref{theorem:asymptotic_approximation} directly shows the inequalities in~\eqref{eq:asymptotic_measure_positive} and~\eqref{eq:asymptotic_measure_zero} with variable stepsizes by using tools from set-valued analysis.
\end{remark}

\subsection{Stationarity\,characterization\,via\,asymptotic\,behavior\,of\,$\mathcal{M}_{\gamma}^{\cost,\phi}$}\,\label{sec:measure:stationarity}
By exploiting asymptotic properties of the measure
$\mathcal{M}_{\gamma}^{\cost,\phi}$
in Theorem~\ref{theorem:asymptotic_approximation}, we characterize stationary points of
$\cost+\phi$
via asymptotic behaviors of the measure under Assumptions~\ref{assumption:regular} and~\ref{assumption:smooth} respectively.
We begin with a simple case in Assumption~\ref{assumption:smooth}.

\begin{theorem}[Characterization of {\rm P}-stationarity via asymptotic behaviors $\mathcal{M}_{\gamma_{n}}^{\cost,\phi}$]
  \label{theorem:P_stationarity}
  Let
  $\cost$
  and
  $\phi$
  satisfy Assumptions~\ref{assumption:basic} and~\ref{assumption:smooth}.
  Choose
  $\widetilde{\gamma} \in (0,\gamma_{\phi})$
  arbitrarily.
  For
  $\bm{x}^{\star} \in \dom{\phi}$,
  the following conditions are equivalent:
  \begin{enumerate}[label=(\roman*)]
    \item
          \label{enum:theorem:P_stationarity:stationarity}
          $\bm{x}^{\star}$
          is a {\rm P}-stationary point of
          $\cost+\phi$.
    \item
          \label{enum:theorem:P_stationarity:positive}
          There exist
          $(\bm{x}_{n})_{n=1}^{\infty} \subset \mathrm{int}(\dom{\cost})$
          with
          $\lim_{n\to\infty} \bm{x}_{n} = \bm{x}^{\star}$
          and
          $(\gamma_{n})_{n=1}^{\infty} \subset [\gamma_{\rm low},\widetilde{\gamma}]$
          with some
          $\gamma_{\rm low} \in (0,\widetilde{\gamma}]$
          such that
          $\liminf_{n\to\infty}\mathcal{M}_{\gamma_{n}}^{\cost,\phi}(\bm{x}_{n}) = 0$.
  \end{enumerate}
\end{theorem}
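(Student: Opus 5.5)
The plan is to prove both directions by combining Proposition~\ref{proposition:criticality_stationarity}~\ref{enum:proposition:criticality_stationarity:smooth} with Theorem~\ref{theorem:asymptotic_approximation}~\ref{enum:theorem:asymptotic_approximation:positive_gamma}. The key structural point is that $\gamma_{n}$ is bounded away from $0$, so only the positive-stepsize case of the asymptotic theorem is needed. Consequently, Assumption~\ref{assumption:phi} is not required.

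For \ref{enum:theorem:P_stationarity:stationarity}$\Rightarrow$\ref{enum:theorem:P_stationarity:positive}, suppose $\bm{x}^{\star}$ is a P-stationary point. By Proposition~\ref{proposition:criticality_stationarity}~\ref{enum:proposition:criticality_stationarity:smooth}, there is $\gamma_{\bm{x}^{\star}}\in(0,\gamma_{\phi})$ with $\mathcal{M}_{\gamma}^{\cost,\phi}(\bm{x}^{\star})=0$ for all $\gamma\in(0,\gamma_{\bm{x}^{\star}}]$. Set $\gamma_{\rm low}\coloneqq\min\{\gamma_{\bm{x}^{\star}},\widetilde{\gamma}\}\in(0,\widetilde{\gamma}]$. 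Take the constant sequences $\bm{x}_{n}\coloneqq\bm{x}^{\star}$ and $\gamma_{n}\coloneqq\gamma_{\rm low}$. These are admissible because $\bm{x}^{\star}\in\dom{\phi}\subset\mathrm{int}(\dom{\cost})$ by Assumption~\ref{assumption:basic}~\ref{enum:problem:origin:cost}. Each term then satisfies $\mathcal{M}_{\gamma_{n}}^{\cost,\phi}(\bm{x}_{n})=0$, which gives \ref{enum:theorem:P_stationarity:positive}.

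For \ref{enum:theorem:P_stationarity:positive}$\Rightarrow$\ref{enum:theorem:P_stationarity:stationarity}, take sequences as in \ref{enum:theorem:P_stationarity:positive}. First pass to a subsequence $\mathcal{N}\in\Ninfty$ along which $\mathcal{M}_{\gamma_{n}}^{\cost,\phi}(\bm{x}_{n})\to 0$. Since $(\gamma_{n})_{n\in\mathcal{N}}\subset[\gamma_{\rm low},\widetilde{\gamma}]$ is compact, refine once more so that $\gamma_{n}\to\widebar{\gamma}\in[\gamma_{\rm low},\widetilde{\gamma}]\subset(0,\gamma_{\phi})$. The $\liminf$ along the refined subsequence is still $0$, and the subsequence of $(\bm{x}_{n})$ still converges to $\bm{x}^{\star}\in\mathrm{int}(\dom{\cost})$. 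Applying Theorem~\ref{theorem:asymptotic_approximation}~\ref{enum:theorem:asymptotic_approximation:positive_gamma}, first inequality, to these subsequences gives $0\le\mathcal{M}_{\widebar{\gamma}}^{\cost,\phi}(\bm{x}^{\star})\le 0$. Hence $\bm{x}^{\star}$ is a critical point with $\gamma_{\bm{x}^{\star}}\coloneqq\widebar{\gamma}\in(0,\gamma_{\phi})$. Proposition~\ref{proposition:criticality_stationarity}~\ref{enum:proposition:criticality_stationarity:smooth}, condition \ref{enum:proposition:criticality_stationarity:criticality}, then yields P-stationarity. Alternatively, Lemma~\ref{lemma:measure_optimality}~\ref{enum:lemma:measure_optimality:stationarity_p_smooth} with \eqref{eq:sum_subdifferential_smooth} gives the same conclusion directly.

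The main point needing care is the subsequence bookkeeping. Theorem~\ref{theorem:asymptotic_approximation} requires $(\gamma_{n})$ to converge, which \ref{enum:theorem:P_stationarity:positive} does not guarantee. So I will extract a subsequence realizing the $\liminf$ before extracting the convergent stepsize subsequence, so that the vanishing of the measure is preserved. I also need $\widebar{\gamma}>0$ so that case~\ref{enum:theorem:asymptotic_approximation:positive_gamma} applies, and this is exactly what the lower bound $\gamma_{\rm low}$ provides.
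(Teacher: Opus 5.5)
Your proof is correct and follows the paper's argument essentially verbatim. For (i)$\Rightarrow$(ii) you use the constant sequences $\bm{x}_{n}\coloneqq\bm{x}^{\star}$ and $\gamma_{n}\coloneqq\min\{\gamma_{\bm{x}^{\star}},\widetilde{\gamma}\}$ obtained from Proposition~\ref{proposition:criticality_stationarity}~\ref{enum:proposition:criticality_stationarity:smooth}, and for the converse you pass to a subsequence along which the measure vanishes and $\gamma_{n}\to\widebar{\gamma}\in[\gamma_{\rm low},\widetilde{\gamma}]$, then apply the first inequality of Theorem~\ref{theorem:asymptotic_approximation}~\ref{enum:theorem:asymptotic_approximation:positive_gamma} and the same proposition; your two-step subsequence extraction simply spells out what the paper does in one line.
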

\begin{proof}
  \ref{enum:theorem:P_stationarity:stationarity} $\Rightarrow$ \ref{enum:theorem:P_stationarity:positive}:
  Since
  $\bm{x}^{\star}$
  is a {\rm P}-stationary point of
  $\cost+\phi$,
  there exists
  $\gamma_{\bm{x}^{\star}} \in (0,\gamma_{\phi})$
  with
  $\mathcal{M}_{\gamma}^{\cost,\phi}(\bm{x}^{\star}) = 0\ (\forall \gamma  \in (0,\gamma_{\bm{x}^{\star}}])$
  by Proposition~\ref{proposition:criticality_stationarity}~\ref{enum:proposition:criticality_stationarity:smooth}.
  For
  $\bm{x}_{n}\coloneqq \bm{x}^{\star}\in \dom{\phi}\subset \mathrm{int}(\dom{\cost})$
  and
  $\gamma_{n}\coloneqq \min\{\gamma_{\bm{x}^{\star}}, \widetilde{\gamma}\}(>0)\ (n\in \mathbb{N})$,
  we get
  $\liminf_{n\to\infty}\mathcal{M}_{\gamma_{n}}^{\cost,\phi}(\bm{x}_{n}) = 0$.

  \ref{enum:theorem:P_stationarity:positive} $\Rightarrow$ \ref{enum:theorem:P_stationarity:stationarity}:
  From
  $\liminf_{n\to\infty}\mathcal{M}_{\gamma_{n}}^{\cost,\phi}(\bm{x}_{n})=0$
  and
  the boundedness of
  $(\gamma_{n})_{n=1}^{\infty} (\subset [\gamma_{\rm low},\widetilde{\gamma}])$,
  there exists an index set
  $\mathcal{N} \in \Ninfty$
  satisfying
  $\lim_{\mathcal{N}\ni n\to\infty} \mathcal{M}_{\gamma_{n}}^{\cost,\phi}(\bm{x}_{n}) = 0$
  and
  $\lim_{\mathcal{N}\ni n\to\infty} \gamma_{n} = \widebar{\gamma}$
  with some cluster point
  $\widebar{\gamma}\in[\gamma_{\rm low},\widetilde{\gamma}]$
  of
  $(\gamma_{n})_{n=1}^{\infty}$.
  Then, the first inequality in~\eqref{eq:asymptotic_measure_positive} in Theorem~\ref{theorem:asymptotic_approximation}~\ref{enum:theorem:asymptotic_approximation:positive_gamma} yields
  $ \mathcal{M}_{\widebar{\gamma}}^{\cost,\phi}(\bm{x}^{\star})= 0$.
  By Proposition~\ref{proposition:criticality_stationarity}~\ref{enum:proposition:criticality_stationarity:smooth},
  $\bm{x}^{\star}$
  is a P-stationary point of
  $\cost+\phi$.
\end{proof}

We present a counterpart of Theorem~\ref{theorem:P_stationarity} under Assumptions~\ref{assumption:regular} and~\ref{assumption:phi}.

\begin{theorem}[Characterization of {\rm F}-stationarity via asymptotic behaviors $\mathcal{M}_{\gamma_{n}}^{\cost,\phi}$]
  \label{theorem:F_stationarity}
  Let
  $\cost$
  and
  $\phi$
  satisfy Assumptions~\ref{assumption:basic},~\ref{assumption:regular}, and~\ref{assumption:phi}.
  Choose
  $\widetilde{\gamma} \in (0,\gamma_{\phi})$
  arbitrarily.
  For
  $\bm{x}^{\star} \in \dom{\phi}$,
  the following conditions are equivalent:
  \begin{enumerate}[label=(\roman*)]
    \item
          \label{enum:theorem:F_stationarity:stationarity}
          $\bm{x}^{\star}$
          is an {\rm F}-stationary point of
          $\cost+\phi$.
    \item
          \label{enum:theorem:F_stationarity:positive}
          There exist
          $(\bm{x}_{n})_{n=1}^{\infty} \subset \mathrm{int}(\dom{\cost})$
          with
          $\lim_{n\to\infty} \bm{x}_{n} = \bm{x}^{\star}$
          and
          $(\gamma_{n})_{n=1}^{\infty} \subset (0,\widetilde{\gamma}]$
          such that
          $\liminf_{n\to\infty}\mathcal{M}_{\gamma_{n}}^{\cost,\phi}(\bm{x}_{n}) = 0$.
  \end{enumerate}
\end{theorem}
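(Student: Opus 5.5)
The plan mirrors the proof of Theorem~\ref{theorem:P_stationarity}, with one new ingredient: since $(\gamma_{n})_{n=1}^{\infty}$ is now only in $(0,\widetilde{\gamma}]$ rather than bounded away from zero, we must also treat a vanishing stepsize. This is where Theorem~\ref{theorem:asymptotic_approximation}~\ref{enum:theorem:asymptotic_approximation:zero_gamma} and Assumption~\ref{assumption:phi} enter.

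For \ref{enum:theorem:F_stationarity:stationarity} $\Rightarrow$ \ref{enum:theorem:F_stationarity:positive}, use Proposition~\ref{proposition:criticality_stationarity}~\ref{enum:proposition:criticality_stationarity:regular}. Under Assumption~\ref{assumption:regular}, an F-stationary point $\bm{x}^{\star}$ satisfies $\mathcal{M}_{\gamma}^{\cost,\phi}(\bm{x}^{\star})=0$ for all $\gamma\in(0,\gamma_{\bm{x}^{\star}}]$ with some $\gamma_{\bm{x}^{\star}}\in(0,\gamma_{\phi})$. Take the constant sequences $\bm{x}_{n}\coloneqq\bm{x}^{\star}\in\dom{\phi}\subset\mathrm{int}(\dom{\cost})$ and $\gamma_{n}\coloneqq\min\{\gamma_{\bm{x}^{\star}},\widetilde{\gamma}\}\in(0,\widetilde{\gamma}]$. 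Then $\mathcal{M}_{\gamma_{n}}^{\cost,\phi}(\bm{x}_{n})=0$ for every $n$, so the liminf is zero.

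For \ref{enum:theorem:F_stationarity:positive} $\Rightarrow$ \ref{enum:theorem:F_stationarity:stationarity}:
\begin{itemize}
\item Extract $\mathcal{N}\in\Ninfty$ along which $\mathcal{M}_{\gamma_{n}}^{\cost,\phi}(\bm{x}_{n})\to 0$. Since $(\gamma_{n})\subset(0,\widetilde{\gamma}]$ is bounded, refine $\mathcal{N}$ so that $\gamma_{n}\to\widebar{\gamma}\in[0,\widetilde{\gamma}]\subset[0,\gamma_{\phi})$.
\item Apply Theorem~\ref{theorem:asymptotic_approximation} to the subsequence. Its hypotheses hold: $\bm{x}_{n}\in\mathrm{int}(\dom{\cost})$, $\bm{x}^{\star}\in\dom{\phi}\subset\mathrm{int}(\dom{\cost})$, and $\gamma_{n}\in(0,\gamma_{\phi})$.
\item \emph{Case $\widebar{\gamma}>0$.} The first inequality in~\eqref{eq:asymptotic_measure_positive} gives $\mathcal{M}_{\widebar{\gamma}}^{\cost,\phi}(\bm{x}^{\star})=0$ with $\widebar{\gamma}\in(0,\gamma_{\phi})$. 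So $\bm{x}^{\star}$ is a critical point, and Proposition~\ref{proposition:criticality_stationarity}~\ref{enum:proposition:criticality_stationarity:regular} yields F-stationarity.
\item \emph{Case $\widebar{\gamma}=0$.} Assumption~\ref{assumption:phi} lets us invoke the first inequality in~\eqref{eq:asymptotic_measure_zero}, giving $\dist(\bm{0},\Lsubdiff\cost(\bm{x}^{\star})+\Lsubdiff\phi(\bm{x}^{\star}))=0$.
\end{itemize}

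The main obstacle is the case $\widebar{\gamma}=0$: a zero distance does not by itself place $\bm{0}$ in the set, so we must show $\Lsubdiff\cost(\bm{x}^{\star})+\Lsubdiff\phi(\bm{x}^{\star})$ is closed. The argument runs as follows.
\begin{itemize}
\item By Fact~\ref{fact:subdifferential_nonempty}, $\Lsubdiff\cost(\bm{x}^{\star})$ is compact.
\item $\Lsubdiff\phi(\bm{x}^{\star})$ is closed, by Assumption~\ref{assumption:phi} applied to the constant sequence, or by the standard closedness of limiting subdifferentials.
\item The sum of a compact set and a closed set is closed: if $\bm{a}_{k}+\bm{b}_{k}\to\bm{z}$, pass to a subsequence with $\bm{a}_{k}\to\bm{a}$, so $\bm{b}_{k}\to\bm{z}-\bm{a}$, which lies in the closed set.
\end{itemize}
Hence $\bm{0}\in\Lsubdiff\cost(\bm{x}^{\star})+\Lsubdiff\phi(\bm{x}^{\star})$. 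By the sum rule~\eqref{eq:sum_subdifferential} under Assumption~\ref{assumption:regular}, this set equals $\Fsubdiff(\cost+\phi)(\bm{x}^{\star})$, so $\bm{x}^{\star}$ is F-stationary.
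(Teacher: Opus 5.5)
Your proposal is correct and follows the paper's proof essentially step by step: the constant-sequence construction via Proposition~\ref{proposition:criticality_stationarity}~\ref{enum:proposition:criticality_stationarity:regular} for (i)$\Rightarrow$(ii), and for (ii)$\Rightarrow$(i) a split into the cases $\widebar{\gamma}>0$ and $\widebar{\gamma}=0$ using the two parts of Theorem~\ref{theorem:asymptotic_approximation}. The only difference is cosmetic: in the case $\widebar{\gamma}=0$, the paper first rewrites the set as $\Fsubdiff(\cost+\phi)(\bm{x}^{\star})$ via~\eqref{eq:sum_subdifferential} and cites closedness of Fr\'echet subdifferentials, whereas you prove closedness of $\Lsubdiff\cost(\bm{x}^{\star})+\Lsubdiff\phi(\bm{x}^{\star})$ directly as a compact set plus a closed set, and both arguments are valid.
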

\begin{proof}
  \ref{enum:theorem:F_stationarity:stationarity} $\Rightarrow$ \ref{enum:theorem:F_stationarity:positive}:
  We can verify it verbatim as in the proof of \ref{enum:theorem:P_stationarity:stationarity} $\Rightarrow$ \ref{enum:theorem:P_stationarity:positive} in Theorem~\ref{theorem:P_stationarity} by using Proposition~\ref{proposition:criticality_stationarity}~\ref{enum:proposition:criticality_stationarity:regular} instead of Proposition~\ref{proposition:criticality_stationarity}~\ref{enum:proposition:criticality_stationarity:smooth}.

  \ref{enum:theorem:F_stationarity:positive}
  $\Rightarrow$
  \ref{enum:theorem:F_stationarity:stationarity}:
  From
  $\liminf_{n\to\infty}\mathcal{M}_{\gamma_{n}}^{\cost,\phi}(\bm{x}_{n})=0$
  and
  the boundedness of
  $(\gamma_{n})_{n=1}^{\infty} (\subset (0,\widetilde{\gamma}])$,
  there exists an index set
  $\mathcal{N} \in \Ninfty$
  such that
  $\lim_{\mathcal{N}\ni n\to\infty}\mathcal{M}_{\gamma_{n}}^{\cost,\phi}(\bm{x}_{n}) = 0$
  and
  $\lim_{\mathcal{N}\ni n\to\infty} \gamma_{n} = \widebar{\gamma}$
  with some cluster point
  $\widebar{\gamma} \in [0,\widetilde{\gamma}]$
  of
  $(\gamma_{n})_{n=1}^{\infty}$.
  We consider two cases
  $\widebar{\gamma} \in (0,\widetilde{\gamma}]$
  and
  $\widebar{\gamma} = 0$.
  For the case
  $\widebar{\gamma} \in (0,\widetilde{\gamma}]$,
  the first inequality in~\eqref{eq:asymptotic_measure_positive} in Theorem~\ref{theorem:asymptotic_approximation}~\ref{enum:theorem:asymptotic_approximation:positive_gamma} yields
  $\mathcal{M}_{\widebar{\gamma}}^{\cost,\phi}(\bm{x}^{\star}) = 0$,
  from which
  $\bm{x}^{\star}$
  is an {\rm F}-stationary point of
  $\cost+\phi$
  by Proposition~\ref{proposition:criticality_stationarity}~\ref{enum:proposition:criticality_stationarity:regular}.
  For the other case
  $\widebar{\gamma} = 0$,
  the first inequality in~\eqref{eq:asymptotic_measure_zero} in Theorem~\ref{theorem:asymptotic_approximation}~\ref{enum:theorem:asymptotic_approximation:zero_gamma} yields
  $\dist(\bm{0}, \Lsubdiff \cost(\bm{x}^{\star}) + \Lsubdiff\phi(\bm{x}^{\star}))\overset{\eqref{eq:sum_subdifferential}}{=}\dist(\bm{0}, \Fsubdiff(\cost+\phi)(\bm{x}^{\star}))=0$.
  Since
  $\Fsubdiff (\cost+\phi)(\bm{x}^{\star})$
  is closed by~\cite[Thm. 8.6]{Rockafellar-Wets98},
  $\bm{x}^{\star}$
  is an {\rm F}-stationary point of
  $\cost+\phi$.
\end{proof}

Here, we also present a smoothing-based variant of Theorem~\ref{theorem:F_stationarity}.

\begin{theorem}[Characterization of {\rm F}-stationarity via asymptotic behaviors $\mathcal{M}_{\gamma_{n}}^{\cost^{\langle \mu_{n} \rangle},\phi}$ with smoothing function]
  \label{theorem:characterization_stationarity_asymptotic}
  Let
  $\cost$
  and
  $\phi$
  satisfy Assumptions~\ref{assumption:basic},~\ref{assumption:regular}, and~\ref{assumption:phi}.
  Let
  $\{\cost^{\langle \mu \rangle}\}_{\mu \in (0,\widetilde{\mu})}$
  with some
  $\widetilde{\mu} \in \exRp$
  be a smoothing function of
  $\cost$.
  Choose
  $\widetilde{\gamma} \in (0,\gamma_{\phi})$
  arbitrarily.
  For
  $\bm{x}^{\star} \in \dom{\phi}$,
  the following conditions are equivalent:
  \begin{enumerate}[label=(\roman*)]
    \item
          \label{enum:corollary:characterization_stationarity_asymptotic:stationary}
          $\bm{x}^{\star}$
          is an {\rm F}-stationary point of
          $\cost+\phi$.
    \item
          \label{enum:corollary:characterization_stationarity_asymptotic:positive}
          There exist
          $(\bm{x}_{n})_{n=1}^{\infty} \subset \mathrm{int}(\dom{\cost})$
          with
          $\lim_{n\to\infty} \bm{x}_{n} = \bm{x}^{\star}$,
          $\mu_{n}(\in (0,\widetilde{\mu})) \searrow 0$
          and
          $(\gamma_{n})_{n=1}^{\infty} \subset (0,\widetilde{\gamma}]$
          such that
          $\liminf_{n\to\infty}\mathcal{M}_{\gamma_{n}}^{\cost^{\langle \mu_{n} \rangle},\phi}(\bm{x}_{n}) = 0$.
  \end{enumerate}
\end{theorem}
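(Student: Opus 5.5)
The plan is to follow the structure of the proof of Theorem~\ref{theorem:F_stationarity}, using the second inequalities in~\eqref{eq:asymptotic_measure_positive} and~\eqref{eq:asymptotic_measure_zero} of Theorem~\ref{theorem:asymptotic_approximation} in place of the first ones. The reverse direction (ii)$\Rightarrow$(i) is then routine. The forward direction (i)$\Rightarrow$(ii) needs a genuine construction, because the smoothed measure is evaluated with $\nabla \cost^{\langle \mu_{n}\rangle}$ rather than with $\Lsubdiff\cost$, so we cannot simply take $\bm{x}_{n}\coloneqq\bm{x}^{\star}$.

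For (ii)$\Rightarrow$(i), take $\mathcal{N}\in\Ninfty$ with $\lim_{\mathcal{N}\ni n\to\infty}\mathcal{M}_{\gamma_{n}}^{\cost^{\langle\mu_{n}\rangle},\phi}(\bm{x}_{n})=0$ and $\gamma_{n}\to\widebar{\gamma}\in[0,\widetilde{\gamma}]$ along $\mathcal{N}$. The subsequence $(\mu_{n})_{n\in\mathcal{N}}$ still decreases to $0$, so the theorem applies to it. If $\widebar{\gamma}>0$, the second inequality in~\eqref{eq:asymptotic_measure_positive} gives $\mathcal{M}_{\widebar{\gamma}}^{\cost,\phi}(\bm{x}^{\star})=0$, and Proposition~\ref{proposition:criticality_stationarity}~\ref{enum:proposition:criticality_stationarity:regular} gives F-stationarity. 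If $\widebar{\gamma}=0$, the second inequality in~\eqref{eq:asymptotic_measure_zero} together with~\eqref{eq:sum_subdifferential} and the closedness of $\Fsubdiff(\cost+\phi)(\bm{x}^{\star})$ gives $\bm{0}\in\Fsubdiff(\cost+\phi)(\bm{x}^{\star})$.

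For (i)$\Rightarrow$(ii), I first use Proposition~\ref{proposition:criticality_stationarity}~\ref{enum:proposition:criticality_stationarity:regular} and its proof. They give $\bm{v}^{\star}\in\Lsubdiff\cost(\bm{x}^{\star})$ and $\widehat{\gamma}\coloneqq\min\{\gamma_{\bm{x}^{\star}},\widetilde{\gamma}\}$ such that $\bm{x}^{\star}\in\prox{\gamma\phi}(\bm{x}^{\star}-\gamma\bm{v}^{\star})$ for all $\gamma\in(0,\widehat{\gamma}]$. By gradient consistency~\eqref{eq:consistency} there are $\bm{x}_{n}\to\bm{x}^{\star}$ in $\mathrm{int}(\dom{\cost})$ and $\mu_{n}\searrow0$ with $\nabla\cost^{\langle\mu_{n}\rangle}(\bm{x}_{n})\to\bm{v}^{\star}$. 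The difficulty is that $\bm{x}_{n}$ need not lie in $\dom{\phi}$, and the prox of a prox-regular $\phi$ is only locally well-behaved, so we need control of $\prox{\gamma\phi}$ near $\bm{x}^{\star}-\gamma\bm{v}^{\star}$.

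I would use the prox-regularity of $\phi$ at $\bm{x}^{\star}$ for $-\bm{v}^{\star}\in\Lsubdiff\phi(\bm{x}^{\star})$, via Fact~\ref{fact:prox_set_mapping}~\ref{enum:fact:prox_set_mapping:prox_single}. Shrinking $\gamma$ if necessary, this should make $\prox{\gamma\phi}$ single-valued and continuous on a neighborhood of $\bm{x}^{\star}-\gamma\bm{v}^{\star}$ (the standard Poliquin--Rockafellar localization). An alternative that avoids continuity: $\prox{\gamma\phi}$ is outer semicontinuous and locally bounded, and its value at $\bm{x}^{\star}-\gamma\bm{v}^{\star}$ is the singleton $\{\bm{x}^{\star}\}$. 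Hence any $\bm{p}_{n}\in\prox{\gamma\phi}(\bm{x}_{n}-\gamma\nabla\cost^{\langle\mu_{n}\rangle}(\bm{x}_{n}))$ is bounded, and all its cluster points lie in $\{\bm{x}^{\star}\}$, so $\bm{p}_{n}\to\bm{x}^{\star}$. This is the route I would use, since it needs only the facts already cited in the proof of Lemma~\ref{lemma:measure_optimality}~\ref{enum:lemma:measure_optimality:existence}.

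With $\gamma_{n}\coloneqq\gamma$ fixed, we then get $\mathcal{M}_{\gamma}^{\cost^{\langle\mu_{n}\rangle},\phi}(\bm{x}_{n})\le\norm{\bm{x}_{n}-\bm{p}_{n}}/\gamma\to0$. The main obstacle is to justify that the prox value at $\bm{x}^{\star}-\gamma\bm{v}^{\star}$ is exactly the singleton $\{\bm{x}^{\star}\}$ for some $\gamma\in(0,\widetilde{\gamma}]$, and not merely a set containing $\bm{x}^{\star}$. This is precisely what Fact~\ref{fact:prox_set_mapping}~\ref{enum:fact:prox_set_mapping:prox_single} provides, as it was used in the proof of Proposition~\ref{proposition:criticality_stationarity}; I would invoke it with $\gamma\le\min\{\gamma_{\bm{x}^{\star}},\widetilde{\gamma}\}$.
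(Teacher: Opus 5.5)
Your proof is correct. The direction (ii)$\Rightarrow$(i) is exactly the paper's, but your (i)$\Rightarrow$(ii) takes a lighter route.

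The paper uses the full strength of Fact~\ref{fact:prox_set_mapping}~\ref{enum:fact:prox_set_mapping:prox_single}: $\prox{\gamma\phi}$ is single-valued and Lipschitz continuous, with some constant $L$, on a ball $B(\bm{x}^{\star}-\gamma\bm{v}^{\star},\tau_{\gamma})$. To exploit this, it extracts a subsequence $m(n)$ of the gradient-consistency sequence so that $\bm{x}_{n}-\gamma\nabla\cost^{\langle\mu_{n}\rangle}(\bm{x}_{n})$ lands in that ball. It then obtains the explicit estimate $\mathcal{M}_{\gamma}^{\cost^{\langle\mu_{n}\rangle},\phi}(\bm{x}_{n})\leq(1+L)\norm{\bm{x}_{n}-\bm{x}^{\star}}/\gamma+L\norm{\nabla\cost^{\langle\mu_{n}\rangle}(\bm{x}_{n})-\bm{v}^{\star}}$.

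You use only two ingredients: the singleton identity $\prox{\gamma\phi}(\bm{x}^{\star}-\gamma\bm{v}^{\star})=\{\bm{x}^{\star}\}$, and the outer semicontinuity and local boundedness of $\prox{\gamma\phi}$ from Fact~\ref{fact:prox_set_mapping}~\ref{enum:fact:prox_set_mapping:locally_bounded}. Since the prox arguments converge to $\bm{x}^{\star}-\gamma\bm{v}^{\star}$, any selection $\bm{p}_{n}$ is bounded and has $\bm{x}^{\star}$ as its only cluster point. Hence $\bm{p}_{n}\to\bm{x}^{\star}$, with no subsequence construction or localization radius needed.

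You lose the quantitative Lipschitz-type bound on the measure, but you no longer depend on the Lipschitz localization of the proximity operator. Since the theorem only asks for $\liminf=0$, the qualitative argument is enough. You are also right that the singleton property is what makes your route work; merely knowing $\bm{x}^{\star}\in\prox{\gamma\phi}(\bm{x}^{\star}-\gamma\bm{v}^{\star})$ would not suffice, because outer semicontinuity alone provides no selection converging to $\bm{x}^{\star}$.
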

\begin{proof}
  \ref{enum:corollary:characterization_stationarity_asymptotic:positive}
  $\Rightarrow$
  \ref{enum:corollary:characterization_stationarity_asymptotic:stationary}:
  This implication can be verified
  as in  the proof of
  \ref{enum:theorem:F_stationarity:positive}
  $\Rightarrow$
  \ref{enum:theorem:F_stationarity:stationarity} in Theorem~\ref{theorem:F_stationarity} by replacing
  $\mathcal{M}_{\gamma_{n}}^{\cost,\phi}(\bm{x}_{n})$
  with
  $\mathcal{M}_{\gamma_{n}}^{\cost^{\langle \mu_{n} \rangle},\phi}(\bm{x}_{n})$
  and
  the first inequalities in~\eqref{eq:asymptotic_measure_positive} and~\eqref{eq:asymptotic_measure_zero} with the corresponding second inequalities respectively.

  \ref{enum:corollary:characterization_stationarity_asymptotic:stationary} $\Rightarrow$
  \ref{enum:corollary:characterization_stationarity_asymptotic:positive}:
  Since
  $\bm{x}^{\star} \in \dom{\phi}$
  is an {\rm F}-stationary point of
  $\cost+\phi$,
  there exists
  $\widebar{\bm{v}} \in \Lsubdiff \phi(\bm{x}^{\star})$
  such that
  $-\widebar{\bm{v}} \in \Lsubdiff \cost(\bm{x}^{\star})$
  by~\eqref{eq:sum_subdifferential}.
  Then,
  Fact~\ref{fact:prox_set_mapping}~\ref{enum:fact:prox_set_mapping:prox_single} in~\ref{sec:appendix:set_valued} with
  the prox-regularity of
  $\phi$
  at
  $\bm{x}^{\star}$
  for
  $\widebar{\bm{v}}$
  ensures the existence of
  $\gamma_{\bm{x}^{\star}} \in (0,\gamma_{\phi})$
  such that
  \begin{align}
    (\forall \gamma \in (0,\gamma_{\bm{x}^{\star}}],\ \exists \tau_{\gamma} \in \mathbb{R}_{++}) & \quad
    \prox{\gamma\phi}
    \ \mathrm{is\ single\mathchar`-valued\ and} \\
                                                                                                 & \hspace{3em} \mathrm{Lipschitz\ continuous\ over\ }
    B(\bm{x}^{\star}+\gamma \widebar{\bm{v}}, \tau_{\gamma}); \label{eq:prox_single_Lipschitz} \\
    (\forall \gamma \in (0,\gamma_{\bm{x}^{\star}}])                                             & \quad \prox{\gamma \phi}(\bm{x}^{\star} + \gamma \widebar{\bm{v}}) = \{\bm{x}^{\star}\}.
    \label{eq:prox_fixed}
  \end{align}

  From~\eqref{eq:consistency} with
  $-\widebar{\bm{v}} \in \Lsubdiff \cost(\bm{x}^{\star})$,
  there exist
  $(\bm{\xi}_{k})_{k=1}^{\infty} \subset \mathrm{int}(\dom{\cost})$
  and
  $(\nu_{k})_{k=1}^{\infty} \subset (0,\widetilde{\mu})$
  with
  $\nu_{k}\searrow 0$
  satisfying
  \begin{equation}
    \lim_{k\to\infty} \bm{\xi}_{k} = \bm{x}^{\star}
    \,\mathrm{and}\,
    \lim_{k\to\infty}\nabla \cost^{\langle\nu_{k}\rangle}(\bm{\xi}_{k}) = -\widebar{\bm{v}}.
    \label{eq:lim_subsequence}
  \end{equation}
  Let
  $\widebar{\gamma} \coloneqq \min\{\gamma_{\bm{x}^{\star}}, \widetilde{\gamma}\}$,
  where
  we will set
  $\gamma_{n}\coloneqq \widebar{\gamma}\ (n\in\mathbb{N})$
  later.
  By~\eqref{eq:lim_subsequence},
  we can define a function
  $m:\mathbb{N}\cup\{0\} \to \mathbb{N}\cup\{0\}$
  with
  $m(0)\coloneqq 0$
  satisfying recursively
  \begin{equation}
    m(n) \coloneqq \min \left\{k \in \mathbb{N} \mid k > m(n-1),\ \norm{\bm{\xi}_{k}-\bm{x}^{\star}} < \frac{\tau_{\widebar{\gamma}}}{2},\ \norm{\nabla \cost^{\langle\nu_{k}\rangle}(\bm{\xi}_{k})+\widebar{\bm{v}}} < \frac{\tau_{\widebar{\gamma}}}{2\widebar{\gamma}}\right\}
  \end{equation}
  for
  $n\in \mathbb{N}$,
  where
  $\tau_{\widebar{\gamma}} \in \mathbb{R}_{++}$
  is well-defined due to
  $\widebar{\gamma} \in (0,\gamma_{\bm{x}^{\star}}]$
  (see~\eqref{eq:prox_single_Lipschitz}).
  By~\eqref{eq:lim_subsequence},
  $m$
  is a monotonically increasing function such that
  $\lim_{n\to\infty}m(n) = +\infty$.
  With
  $m$,
  we consider subsequences
  $(\bm{x}_{n})_{n=1}^{\infty}\coloneqq (\bm{\xi}_{m(n)})_{n=1}^{\infty}$
  of
  $(\bm{\xi}_{k})_{k=1}^{\infty}$,
  and
  $(\mu_{n})_{n=1}^{\infty} \coloneqq (\nu_{m(n)})_{n=1}^{\infty}$
  of
  $(\nu_{k})_{k=1}^{\infty}$,
  and let
  $\bm{g}_{n} \coloneqq \nabla \cost^{\langle\nu_{m(n)}\rangle}(\bm{\xi}_{m(n)})=\nabla \cost^{\langle \mu_{n} \rangle}(\bm{x}_{n})$.

  For
  $n\in\mathbb{N}$,
  since
  $\norm{\bm{x}_{n}-\bm{x}^{\star}} < \frac{\tau_{\widebar{\gamma}}}{2}$
  and
  $\norm{\bm{g}_{n}+\widebar{\bm{v}}} < \frac{\tau_{\widebar{\gamma}}}{2\widebar{\gamma}}$
  hold, we get
  $\bm{x}_{n}-\widebar{\gamma}\bm{g}_{n} \in B(\bm{x}^{\star} + \widebar{\gamma}\widebar{\bm{v}}, \tau_{\widebar{\gamma}})$
  from
  $\norm{(\bm{x}_{n}-\widebar{\gamma}\bm{g}_{n}) - (\bm{x}^{\star} + \widebar{\gamma}\widebar{\bm{v}})} \leq \norm{\bm{x}_{n} - \bm{x}^{\star}} + \widebar{\gamma} \norm{\bm{g}_{n}+\widebar{\bm{v}}} < \frac{\tau_{\widebar{\gamma}}}{2} + \frac{\tau_{\widebar{\gamma}}}{2}=\tau_{\widebar{\gamma}}$.
  By~\eqref{eq:prox_single_Lipschitz}
  with
  $\widebar{\gamma} \in (0,\gamma_{\bm{x}^{\star}}]$,
  $\prox{\widebar{\gamma}\phi}(\bm{x}_{n}-\widebar{\gamma}\bm{g}_{n})$
  is singleton for every
  $n\in\mathbb{N}$,
  and
  $\prox{\widebar{\gamma}\phi}$
  is
  $L_{\prox{\widebar{\gamma}\phi}}$-Lipschitz continuous with some
  $L_{\prox{\widebar{\gamma}\phi}}>0$
  over
  $B(\bm{x}^{\star}+\widebar{\gamma}\widebar{\bm{v}}, \tau_{\widebar{\gamma}})$.
  Hence, we have
    {
      \thickmuskip=0.2\thickmuskip
      \medmuskip=0.2\medmuskip
      \thinmuskip=0.2\thinmuskip
      \arraycolsep=0.2\arraycolsep
      \begin{align}
         & \mathcal{M}_{\widebar{\gamma}}^{\cost^{\langle \mu_{n}\rangle},\phi}(\bm{x}_{n})
        = \norm{\frac{\bm{x}_{n}-\prox{\widebar{\gamma}\phi}(\bm{x}_{n}-\widebar{\gamma}\bm{g}_{n})}{\widebar{\gamma}}}
        \leq
        \norm{\frac{\bm{x}_{n}-\bm{x}^{\star}}{\widebar{\gamma}}} + \norm{\frac{\bm{x}^{\star}-\prox{\widebar{\gamma}\phi}(\bm{x}_{n}-\widebar{\gamma}\bm{g}_{n})}{\widebar{\gamma}}}                                                                    \hspace{-2em} \\
         & =
        \norm{\frac{\bm{x}_{n}-\bm{x}^{\star}}{\widebar{\gamma}}} + \norm{\frac{\prox{\widebar{\gamma}\phi}(\bm{x}^{\star}+\widebar{\gamma}\widebar{\bm{v}})-\prox{\widebar{\gamma}\phi}(\bm{x}_{n}-\widebar{\gamma}\bm{g}_{n})}{\widebar{\gamma}}} \quad (\because \eqref{eq:prox_fixed}) \\
         & \leq
        \norm{\frac{\bm{x}_{n}-\bm{x}^{\star}}{\widebar{\gamma}}} + L_{\prox{\widebar{\gamma}\phi}}\norm{\frac{(\bm{x}^{\star}+\widebar{\gamma}\widebar{\bm{v}}) - (\bm{x}_{n}-\widebar{\gamma}\bm{g}_{n})}{\widebar{\gamma}}}
        \leq
        (1+L_{\prox{\widebar{\gamma}\phi}})\norm{\frac{\bm{x}_{n}-\bm{x}^{\star}}{\widebar{\gamma}}} + L_{\prox{\widebar{\gamma}\phi}}\norm{\bm{g}_{n} + \widebar{\bm{v}}}.
      \end{align}
    }%

  Recall
  $\lim_{n\to\infty}\norm{\bm{x}_{n}-\bm{x}^{\star}} = 0$
  and
  $\lim_{n\to\infty}\norm{\bm{g}_{n}+\widebar{\bm{v}}} = 0$
  by~\eqref{eq:lim_subsequence} with
  $\bm{x}_{n}=\bm{\xi}_{m(n)}$
  and
  $\bm{g}_{n} = \nabla \cost^{\langle \nu_{m(n)} \rangle}(\bm{\xi}_{m(n)})$.
  Hence,
  by letting
  $\gamma_{n} \coloneqq \widebar{\gamma}\ (n\in\mathbb{N})$,
  $\liminf_{n\to\infty}\mathcal{M}_{\gamma_{n}}^{\cost^{\langle \mu_{n} \rangle},\phi}(\bm{x}_{n}) = 0$
  holds.
\end{proof}

\section{Proximal stationarity of cluster points generated by proximal gradient methods under Assumption~\ref{assumption:smooth}}
\label{sec:application}\label{sec:application:existing}
\label{sec:application:existing:PGM}
In this section, we revisit convergence analyses of proximal gradient methods (PGM), e.g.,~\cite{Attouch-Bolte-Svaiter13,Li-Lin15,Beck17,Themelis-Stella-Patrinos18,Kanzow-Mehlitz22,DeMarchi23,Jia-Kanwoz-Mehlitz23,Kanzow-Lehmann25,Yagishita-Ito25A,Yagishita-Ito25B}, for the problem~\eqref{eq:problem} under Assumption~\ref{assumption:smooth}, where~\cite{Attouch-Bolte-Svaiter13,Li-Lin15,Beck17,Themelis-Stella-Patrinos18,Yagishita-Ito25B} assume additionally that
$\nabla \cost$
is globally Lipschitz continuous.
The PGM updates a sequence
$(\bm{x}_{n})_{n=1}^{\infty} \subset \dom{\phi}$
with an appropriate stepsize
$\gamma_{n} > 0$
(see Remark~\ref{remark:stepsize_selection} below)
as
\begin{equation}
  (n\in\mathbb{N}) \quad \bm{x}_{n+1} \in \prox{\gamma_{n}\phi}(\bm{x}_{n}-\gamma_{n}\nabla \cost(\bm{x}_{n})). \label{eq:PGM}
\end{equation}
To the best of the authors' knowledge,
conventional convergence analyses of PGM
under Assumption~\ref{assumption:smooth}
guarantee at most that every cluster point
$\widebar{\bm{x}} \in \dom{\phi}$
of
$(\bm{x}_{n})_{n=1}^{\infty}$
is an {\rm L}-stationary point~\cite{Kanzow-Mehlitz22,DeMarchi23,Jia-Kanwoz-Mehlitz23,Kanzow-Lehmann25}, or an {\rm F}-stationary point~\cite{Yagishita-Ito25A},
of
$\cost + \phi$.

Recently,
\citeauthor{Olikier-Waldspurger25}~\cite[Sect. 9]{Olikier-Waldspurger25} raised the following open question:
\begin{quote}
  If
  $\widebar{\bm{x}} \in \mathcal{X}$
  is a cluster point of
  a sequence
  $(\bm{x}_{n})_{n=1}^{\infty}\subset \dom{\phi}$
  generated by PGM in~\eqref{eq:PGM} under Assumption~\ref{assumption:smooth},
  is
  $\widebar{\bm{x}}$
  a {\rm P}-stationary point of
  $\cost+\phi$ ?
  (Note: the {\rm P}-stationarity is stronger than the {\rm L}-stationarity and {\rm F}-stationarity by~\eqref{eq:hierarchical_optimality}).
\end{quote}
By virtue of results in Section~\ref{sec:stationarity}, we provide an affirmative answer to this open question.

Typical convergence analysis of PGM, e.g.,~\cite{Li-Lin15,Kanzow-Mehlitz22,DeMarchi23,Jia-Kanwoz-Mehlitz23,Kanzow-Lehmann25,Yagishita-Ito25B},
proceeds as follows:
\begin{enumerate}[label=Step (\arabic*),leftmargin=*,align=left]
  \item
        choose a cluster point
        $\widebar{\bm{x}} \in \mathcal{X}$
        of
        $(\bm{x}_{n})_{n=1}^{\infty} \subset \dom{\phi}$
        generated by PGM in~\eqref{eq:PGM}, and choose an index set
        $\mathcal{N} \in \Ninfty$
        such that
        $\lim_{\mathcal{N}\ni n \to \infty}\bm{x}_{n}=\widebar{\bm{x}}$;
  \item
        \label{enum:PGM_step2}
        show that
        $\widebar{\bm{x}} \in \dom{\phi}$,
        $\lim_{\mathcal{N}\ni n\to\infty}\norm{\bm{x}_{n+1}-\bm{x}_{n}} = 0$,
        $(\gamma_{n})_{n\in\mathcal{N}} \subset [\gamma_{\rm low},\gamma_{\rm up}] (\subset (0,\gamma_{\phi}))$
        with some positive constants
        $\gamma_{\rm low}$
        and
        $\gamma_{\rm up}$
        (see, e.g.,~\cite[Lemma 4.3 (i), (iii) and Cor. 4.5]{DeMarchi23}; see also Remark~\ref{remark:stepsize_selection} for stepsizes);
  \item
        show that the limit point
        $\widebar{\bm{x}}$
        of
        $(\bm{x}_{n})_{n\in\mathcal{N}}$
        is an  {\rm L}-stationary point of
        $\cost + \phi$
        by using results in~\ref{enum:PGM_step2} (see, e.g.,~\cite[Thm. 4.6]{DeMarchi23}).
\end{enumerate}

\begin{remark}[Stepsize selections]
  \label{remark:stepsize_selection}
  In order to ensure the condition
  $(\gamma_{n})_{n\in\mathcal{N}} \subset [\gamma_{\rm low},\gamma_{\rm up}] (\subset (0,\gamma_{\phi}))$
  in~\ref{enum:PGM_step2} above,
  we have mainly three stepsize selections:
  (I) the so-called (monotone or nonmonotone) linesearch algorithm is used to obtain
  $\gamma_{n}$
  satisfying certain Armijo-type conditions, e.g.,~\cite{Li-Lin15,Beck17,Kanzow-Mehlitz22,DeMarchi23,Jia-Kanwoz-Mehlitz23,Kanzow-Lehmann25,Yagishita-Ito25A};
  (II)
  $\gamma_{n}$
  is set as the reciprocal of the Lipschitz constant of
  $\nabla \cost$, e.g.,~\cite{Attouch-Bolte-Svaiter13,Li-Lin15,Beck17};
  (III) the linesearch-free algorithm~\cite{Yagishita-Ito25B} is employed.
  We note that (II) and (III) require the global Lipschitz continuity of
  $\nabla \cost$,
  which is a stronger condition than Assumption~\ref{assumption:smooth}.
\end{remark}
By combining Theorem~\ref{theorem:P_stationarity} with the intermediate results in~\ref{enum:PGM_step2} of typical convergence analyses for PGM, we guarantee that
$\widebar{\bm{x}}$
is actually a {\rm P}-stationary point of
$\cost+\phi$
as follows.

\begin{corollary}[PGM accumulates at {\rm P}-stationary points]
  \label{corollary:refine_PGM}
  Let
  $(\bm{x}_{n})_{n=1}^{\infty} \subset \dom{\phi}$
  be generated by PGM in~\eqref{eq:PGM} under
  Assumptions~\ref{assumption:basic} and~\ref{assumption:smooth}.
  For every cluster point
  $\widebar{\bm{x}} \in \mathcal{X}$
  of
  $(\bm{x}_{n})_{n=1}^{\infty} \subset \dom{\phi}$,
  $\widebar{\bm{x}}$
  is a {\rm P}-stationary point of
  $\cost+\phi$
  in~\eqref{eq:problem}
  if all conditions in~\ref{enum:PGM_step2} hold with an index set
  $\mathcal{N}\in \Ninfty$
  satisfying
  $\lim_{\mathcal{N}\ni n\to\infty}\bm{x}_{n}=\widebar{\bm{x}}$,
  i.e.,
  $\widebar{\bm{x}} \in \dom{\phi}$,
  $\lim_{\mathcal{N}\ni n\to\infty}\norm{\bm{x}_{n+1}-\bm{x}_{n}} = 0$
  and
  $(\gamma_{n})_{n\in\mathcal{N}} \subset [\gamma_{\rm low},\gamma_{\rm up}]$
  $(\exists \gamma_{\rm low}, \gamma_{\rm up} \in (0,\gamma_{\phi}))$.
\end{corollary}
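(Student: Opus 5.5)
The plan is to reduce the corollary to the implication \ref{enum:theorem:P_stationarity:positive} $\Rightarrow$ \ref{enum:theorem:P_stationarity:stationarity} in Theorem~\ref{theorem:P_stationarity}, applied with $\bm{x}^{\star}\coloneqq\widebar{\bm{x}}$ and $\widetilde{\gamma}\coloneqq\gamma_{\rm up}\in(0,\gamma_{\phi})$. This requires a sequence converging to $\widebar{\bm{x}}$ inside $\mathrm{int}(\dom{\cost})$, with stepsizes in $[\gamma_{\rm low},\gamma_{\rm up}]$, along which $\mathcal{M}_{\gamma_{n}}^{\cost,\phi}$ vanishes. The natural candidates are the subsequence $(\bm{x}_{n})_{n\in\mathcal{N}}$ and the stepsizes $(\gamma_{n})_{n\in\mathcal{N}}$, re-indexed as sequences over $\mathbb{N}$ by enumerating $\mathcal{N}$ increasingly.

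First, the domain requirement is checked. Since $(\bm{x}_{n})\subset\dom{\phi}\subset\mathrm{int}(\dom{\cost})$ by Assumption~\ref{assumption:basic}\ref{enum:problem:origin:cost}, and $\widebar{\bm{x}}\in\dom{\phi}$ by \ref{enum:PGM_step2}, the limit also lies in $\mathrm{int}(\dom{\cost})$, as Theorem~\ref{theorem:P_stationarity} demands.

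Second, the measure is bounded by the step length. Under Assumption~\ref{assumption:smooth}, $\Lsubdiff\cost(\bm{x}_{n})=\{\nabla\cost(\bm{x}_{n})\}$ because $\cost$ is continuously differentiable on $\mathrm{int}(\dom{\cost})$ (\cite[Exe. 8.8 (b)]{Rockafellar-Wets98}). Hence the PGM update \eqref{eq:PGM} gives $\bm{x}_{n+1}\in\prox{\gamma_{n}\phi}(\bm{x}_{n}-\gamma_{n}\Lsubdiff\cost(\bm{x}_{n}))$. The definition \eqref{eq:measure_limiting} then yields
\begin{equation*}
  (n\in\mathcal{N})\quad
  \mathcal{M}_{\gamma_{n}}^{\cost,\phi}(\bm{x}_{n})\leq \frac{\norm{\bm{x}_{n}-\bm{x}_{n+1}}}{\gamma_{n}}\leq\frac{\norm{\bm{x}_{n+1}-\bm{x}_{n}}}{\gamma_{\rm low}} ,
\end{equation*}
and the right-hand side tends to $0$ along $\mathcal{N}$ by \ref{enum:PGM_step2}. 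Therefore $\lim_{\mathcal{N}\ni n\to\infty}\mathcal{M}_{\gamma_{n}}^{\cost,\phi}(\bm{x}_{n})=0$, and in particular the liminf of the re-indexed sequence is zero.

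Third, condition \ref{enum:theorem:P_stationarity:positive} of Theorem~\ref{theorem:P_stationarity} now holds with $\widetilde{\gamma}=\gamma_{\rm up}$ and the given $\gamma_{\rm low}\in(0,\widetilde{\gamma}]$, so $\widebar{\bm{x}}$ is a P-stationary point. There is no real obstacle: the substantive work, namely passing to the limit $\widebar{\gamma}>0$ through outer semicontinuity of the prox, was already done in Theorem~\ref{theorem:asymptotic_approximation}. The only points needing care are the identification $\Lsubdiff\cost=\nabla\cost$ on $\mathrm{int}(\dom{\cost})$, which is what makes $\bm{x}_{n+1}$ admissible in the infimum defining $\mathcal{M}$, and the requirement $\gamma_{\rm up}<\gamma_{\phi}$.
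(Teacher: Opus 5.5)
Your proposal is correct and follows the paper's proof: both bound $\mathcal{M}_{\gamma_{n}}^{\cost,\phi}(\bm{x}_{n})$ by $\norm{\bm{x}_{n+1}-\bm{x}_{n}}/\gamma_{\rm low}$ along $\mathcal{N}$ using the PGM update, and then apply the implication (ii) $\Rightarrow$ (i) of Theorem~\ref{theorem:P_stationarity}. Your remarks on $\Lsubdiff\cost(\bm{x}_{n})=\{\nabla\cost(\bm{x}_{n})\}$, the choice $\widetilde{\gamma}=\gamma_{\rm up}$, and re-indexing the subsequence state explicitly what the paper leaves implicit.
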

\begin{proof}
  We have
  $\norm{\bm{x}_{n+1}-\bm{x}_{n}} = \gamma_{n}\norm{(\bm{x}_{n+1}-\bm{x}_{n})/\gamma_{n}} \geq \gamma_{\rm low}\norm{(\bm{x}_{n+1}-\bm{x}_{n})/\gamma_{n}} \geq \gamma_{\rm low}\mathcal{M}_{\gamma_{n}}^{\cost,\phi}(\bm{x}_{n})\geq 0\ (n\in\mathcal{N})$
  from~\eqref{eq:measure_limiting} and~\eqref{eq:PGM}.
  The condition
  $\lim_{\mathcal{N}\ni n\to\infty}\norm{\bm{x}_{n+1}-\bm{x}_{n}} = 0$
  ensures
  $\lim_{\mathcal{N}\ni n\to\infty}\mathcal{M}_{\gamma_{n}}^{\cost,\phi}(\bm{x}_{n}) = 0$,
  from which
  $\widebar{\bm{x}}$
  is a {\rm P}-stationary point of
  $\cost+\phi$
  by Theorem~\ref{theorem:P_stationarity} together with
  $\lim_{\mathcal{N}\ni n\to\infty} \bm{x}_{n}=\widebar{\bm{x}} \in \dom{\phi}$
  and
  $(\gamma_{n})_{n\in \mathcal{N}} \subset [\gamma_{\rm low},\gamma_{\rm up}] \subset (0,\gamma_{\phi})$.
\end{proof}

Corollary~\ref{corollary:refine_PGM} streamlines a convergence analysis of PGM in~\eqref{eq:PGM}, and leads to
the stronger conclusion that every cluster point of
$(\bm{x}_{n})_{n=1}^{\infty}$
is a
  {\rm P}-stationary point.
We finally note that another open question in~\cite[Sect. 9]{Olikier-Waldspurger25} still remains:
whether
every cluster point of
$(\bm{x}_{n})_{n=1}^{\infty}$
generated by PGM in~\eqref{eq:PGM} is an {\rm F}-stationary point of
$\cost+\phi$
or not
under a weaker condition, namely the continuous differentiability of
$\cost$,
than Assumption~\ref{assumption:smooth}.

\section{Proximal variable smoothing algorithm with nonmonotone linesearch under Assumption~\ref{assumption:regular}} \label{sec:algorithm}
In this section, we propose a proximal variable smoothing algorithm with a nonmonotone linesearch for approximating iteratively an {\rm F}-stationary point of
$\cost+\phi$
under Assumptions~\ref{assumption:basic},~\ref{assumption:regular} and~\ref{assumption:phi}, and the availability of a smoothing function
$\{ \cost^{\langle \mu \rangle} \}_{\mu \in (0,\widetilde{\mu})}$
of
$\cost$.
By Theorem~\ref{theorem:characterization_stationarity_asymptotic},
the goal of finding an {\rm F}-stationary point
$\bm{x}^{\star} \in \mathcal{X}$
of
$\cost + \phi$
is reduced to the following problem:
\begin{equation}
  \mathrm{find\ a\ convergent\ sequence}\
  (\bm{x}_{n})_{n=1}^{\infty}\subset \mathrm{int}(\dom{\cost})\
  \mathrm{such\ that}\
  \liminf\limits_{n\to\infty}\mathcal{M}_{\gamma_{n}}^{\cost_{n},\phi}(\bm{x}_{n}) = 0
  \label{eq:Moreau_optimality}
\end{equation}
with some
$(\gamma_{n})_{n=1}^{\infty} \subset (0,\widetilde{\gamma}]$
and with some
$\widetilde{\gamma} \in (0,\gamma_{\phi})$,
where we use
$\cost_{n}\coloneqq \cost^{\langle \mu_{n} \rangle}$
with
$\mu_{n}\searrow 0$
for a notational simplicity in this section.

The proposed algorithm illustrated in Algorithm~\ref{alg:proposed} is designed to find a sequence
$(\bm{x}_{n})_{n=1}^{\infty}$
satisfying
$\liminf_{n\to\infty}\mathcal{M}_{\gamma_{n}}^{\cost_{n},\phi}(\bm{x}_{n}) = 0$.
More precisely, Algorithm~\ref{alg:proposed} is inspired by proximal gradient updates for time-varying function
$\cost_{n} + \phi=\cost^{\langle \mu_{n} \rangle}+\phi$
(see line~\ref{line:projection} in Algorithm~\ref{alg:proposed}),
where smoothing parameters
$(\mu_{n})_{n=1}^{\infty} \subset (0,\widetilde{\mu})$
are chosen to satisfy the conditions introduced in~\cite[Eq. (26)]{Kume-Yamada24A}:
\begin{equation}
  \textstyle
  \mathrm{(i)}\
  \lim_{n\to\infty} \mu_{n} = 0, \quad
  \mathrm{(ii)}\
  \sum_{n=1}^{\infty} \mu_{n} = +\infty, \quad
  \mathrm{(iii)}\
  (\forall n \in \mathbb{N}) \quad \mu_{n+1}\leq \mu_{n}. \label{eq:smoothing_parameter}
\end{equation}
For example,
$(\mu_{n})_{n=1}^{\infty} \coloneqq (\tau n^{-1/\alpha})$
with
$\tau \in (0,\widetilde{\mu})$
and
$\alpha \geq 1$
satisfies the condition~\eqref{eq:smoothing_parameter} (see, e.g.,~\cite[Exm. 4.6]{Kume-Yamada24A}).

In line~\ref{line:projection} of Algorithm~\ref{alg:proposed}, although we require choosing one element from the nonempty set
$\prox{\rho^{l}\gamma_{\rm init}\phi}(\bm{x}_{n} - \rho^{l}\gamma_{\rm init} \nabla \cost_{n}(\bm{x}_{n}))$,
this step is available for the so-called {\em prox-friendly} functions
$\phi$, e.g.,
$\ell_{1}$-norm, $\ell_{0}$-pseudonorm, rank function, and the indicator function
of
$\mathfrak{L}_{r,\sigma}$
(see footnote\footref{foot:proximally_smooth}) in
Example~\ref{example:phi} (see~\cite{Bauschke-Combettes17,Chierchia-Chouzenoux-Combettes-Pesquet,Beck17,Balashov-Kamalov21}).
Here, prox-friendly in this paper means that
at least one point
$\bm{p} \in \prox{\gamma \phi}(\widebar{\bm{x}})$
of the output of the set-valued proximity operator can be computed for every
$\widebar{\bm{x}} \in \mathcal{X}$
and
$\gamma \in (0,\gamma_{\phi})$.

\begin{algorithm}[t]
  \caption{Proximal variable smoothing with nonmonotone linesearch under Assumption~\ref{assumption:regular}}
  \label{alg:proposed}
  {
    \begin{algorithmic}[1]
      \Require
      $\bm{x}_{1}\in \dom{\phi}$,
      $c\in(0,2^{-1})$,
      $\rho \in (0,1)$,
      $\gamma_{\rm init} \in (0,\gamma_{\phi})$,
      $\ratio  \in (0,1)$,
      $\{\cost^{\langle \mu \rangle}\}_{\mu \in (0,\widetilde{\mu})}$: smoothing function of
      $\cost$ (see Definition~\ref{definition:smoothing})
      \For{$n=1,2,\ldots$}
      \State
      Set
      $\cost_{n} \coloneqq \cost^{\langle \mu_{n} \rangle}$
      with
      $\mu_{n} \in (0,\widetilde{\mu})$
      satisfying~\eqref{eq:smoothing_parameter}.
      \State
      Choose
      $\theta_{n} \in [\ratio ,1]$
      arbitrarily, and
      set
      $\NMV_{n}\in\mathbb{R}$,
      with a constant
      $\kappa\in\mathbb{R}_{++}$
      depending on
      $\{\cost^{\langle \mu \rangle}\}_{\mu \in (0,\widetilde{\mu})}$
      (see Definition~\ref{definition:smoothing}~\ref{enum:consistency:uniform}), as
      \begin{equation}
        \NMV_{n} \coloneqq
        \begin{cases}
          (\cost_{n}+\phi)(\bm{x}_{n}),                                                                                        & (n=1); \\
          \theta_{n}(\cost_{n}+\phi)(\bm{x}_{n}) + (1-\theta_{n})\left(\NMV_{n-1}+\kappa\left(\mu_{n-1}-\mu_{n}\right)\right), & (n>1).
        \end{cases}
        \label{eq:NMV}
      \end{equation}
      \label{line:p_n}
      \For{$l=0,1,2,\ldots$}\label{line:backtracking_start}
      \Comment{Backtracking step (see also Lemma~\ref{lemma:stepsize})}
      \State
      Pick
      $\bm{p} \in \prox{\rho^{l}\gamma_{\rm init}\phi}(\bm{x}_{n} - \rho^{l}\gamma_{\rm init} \nabla \cost_{n}(\bm{x}_{n})) \subset \dom{\phi}$ \label{line:projection}
      \If{$(\cost_{n}+\phi)(\bm{p}) \leq \NMV_{n} - c\rho^{l}\gamma_{\rm init}\norm{\frac{\bm{x}_{n}-\bm{p}}{\rho^{l}\gamma_{\rm init}}}^{2}$ holds} \label{line:Armijo}
      \State
      Set
      $(\bm{x}_{n+1},\gamma_{n}) \leftarrow (\bm{p},\rho^{l}\gamma_{\rm init})$, and \textbf{break}
      \EndIf
      \EndFor \label{line:backtracking_end}
      \EndFor
    \end{algorithmic}
  }
\end{algorithm}

At the
$n$-th
iteration of Algorithm~\ref{alg:proposed},
we choose
$\mu_{n} \in (0,\widetilde{\mu})$
satisfying~\eqref{eq:smoothing_parameter},
and set
$\cost_{n}\coloneqq \cost^{\langle \mu_{n} \rangle}$.
Then,
we estimate
$\bm{x}_{n+1} \in \dom{\phi}$
and an appropriate stepsize
$\gamma_{n} \in (0,\gamma_{\phi})$
such that
$(\bm{x},\gamma) \coloneqq (\bm{x}_{n+1},\gamma_{n})$
satisfies the following (nonmonotone) Armijo-type condition with
$c \in (0,2^{-1})$
and a current estimate
$\bm{x}_{n} \in \dom{\phi}$:
\begin{equation}
  \thickmuskip=0.0\thickmuskip
  \medmuskip=0.0\medmuskip
  \thinmuskip=0.0\thinmuskip
  \arraycolsep=0.0\arraycolsep
  \mathrm{Armijo\mathchar`-type\ condition\ }(n): \
  (\cost_{n}+\phi)(\bm{x})
  \leq \NMV_{n} - c\gamma \norm{\frac{\bm{x}_{n}-\bm{x}}{\gamma}}^{2}
  \left(=\NMV_{n} - \frac{c}{\gamma} \norm{\bm{x}_{n}-\bm{x}}^{2}\right),
  \label{eq:Armijo}
\end{equation}
where the reference value
$\NMV_{n} \in \mathbb{R}$
is given by~\eqref{eq:NMV} in Algorithm~\ref{alg:proposed}.

By setting
$\theta_{n} \coloneqq 1\ (n\in\mathbb{N})$
in line~\ref{line:p_n} of Algorithm~\ref{alg:proposed},
each reference value is given by
$\NMV_{n} = (\cost_{n}+\phi)(\bm{x}_{n})$
(see~\eqref{eq:NMV}),
and thus the Armijo-type condition
$(n)$
in~\eqref{eq:Armijo} coincides with the standard monotone Armijo condition for
$\cost_{n} + \phi$
(see, e.g.,~\eqref{eq:Armijo_general} below; see also, e.g.,~\cite{Beck17,Bolte-Sabach-Teboulle14,Themelis-Stella-Patrinos18}).
In this case, the Armijo-type condition $(n)$
in~\eqref{eq:Armijo}
for
$\bm{x} \neq \bm{x}_{n}$
implies
$(\cost_{n}+\phi)(\bm{x}) < (\cost_{n}+\phi)(\bm{x}_{n})$,
i.e., a monotonic decrease
in the value of
$\cost_{n}+\phi$.
For this reason,
we call the condition~\eqref{eq:Armijo} the monotone Armijo condition when
$\theta_{n} \coloneqq 1$.

In contrast, by setting
$\theta_{n} \in (0,1)\ (n > 1)$,
we have
$\NMV_{n} =\theta_{n}(\cost_{n}+\phi)(\bm{x}_{n}) + (1-\theta_{n})\left(\NMV_{n-1}+\kappa\left(\mu_{n-1}-\mu_{n}\right)\right)$,
where
$\NMV_{n-1} \in \mathbb{R}$
is the previous reference value
and
$\kappa \in \mathbb{R}_{++}$
is given in Definition~\ref{definition:smoothing}~\ref{enum:consistency:uniform}.
Then, the Armijo-type condition $(n)$ in~\eqref{eq:Armijo} for
$\bm{x} \neq \bm{x}_{n}$
does not imply
$(\cost_{n}+\phi)(\bm{x}) < (\cost_{n}+\phi)(\bm{x}_{n})$,
and is essentially identical to the (average-type) nonmonotone Armijo condition, e.g.,~\cite{Zhang-Hager04,Themelis-Stella-Patrinos18,Kanzow-Mehlitz22,DeMarchi23,Jia-Kanwoz-Mehlitz23,Kanzow-Lehmann25,Yagishita-Ito25A}.
We note that the additional term
$\kappa(\mu_{n-1}-\mu_{n})$
in~\eqref{eq:NMV} is not required in~\cite{Zhang-Hager04,Themelis-Stella-Patrinos18,Kanzow-Mehlitz22,DeMarchi23,Jia-Kanwoz-Mehlitz23,Kanzow-Lehmann25,Yagishita-Ito25A}, but is required for the proposed algorithm in order to compensate for the drift due to the time-varying function
$\cost_{n}$ (see also~\eqref{eq:function_inequality} below).
It is reported, e.g., in
\cite{Zhang-Hager04,Ahookhosh-Ghaderi17},
that the use of a nonmonotone Armijo condition can improve the likelihood of finding a global minimizer because the generated sequence may escape valleys around local minimizers, compared with the monotone Armijo condition.

In order to find
$(\bm{x},\gamma)$
satisfying the Armijo-type condition $(n)$ in~\eqref{eq:Armijo},
we employ a {\em backtracking step} in
lines~\ref{line:backtracking_start}-\ref{line:backtracking_end} of Algorithm~\ref{alg:proposed}.
The following lemma ensures that
the loop in lines~\ref{line:backtracking_start}-\ref{line:backtracking_end} of Algorithm~\ref{alg:proposed} is terminated within a finite number of iterations.

\begin{lemma}[Finite termination of the backtracking step]
  \label{lemma:stepsize}
  Let
  $\cost + \phi$
  satisfy Assumptions~\ref{assumption:basic} and~\ref{assumption:regular}, and
  $\{\cost^{\langle \mu \rangle}\}_{\mu \in (0,\widetilde{\mu})}$
  with some
  $\widetilde{\mu} \in \exRp$
  be a smoothing function of
  $\cost$.
  Choose arbitrarily
  $\bm{x}_{1} \in \dom{\phi}$,
  $c \in (0,2^{-1})$,
  $\rho \in (0,1)$,
  $\gamma_{\rm init} \in (0,\gamma_{\phi})$,
  and
  $\ratio \in (0,1)$.
  Then,
  for
  $\cost_{n}\coloneqq \cost^{\langle \mu_{n}\rangle}\ (n\in \mathbb{N})$
  with
  $(\mu_{n})_{n=1}^{\infty} \subset (0,\widetilde{\mu})$
  satisfying~\eqref{eq:smoothing_parameter},
  and
  for
  $(\bm{x}_{n})_{n=1}^{\infty} \subset \dom{\phi}$
  generated by Algorithm~\ref{alg:proposed}, the following hold:
  \begin{enumerate}[label=(\alph*)]
    \item
          \label{enum:Armijo_implies_inequality}
          For
          $n$-th estimate
          $\bm{x}_{n} \in \dom{\phi}$
          in Algorithm~\ref{alg:proposed},
          assume that the Armijo-type condition
          $(n)$
          in~\eqref{eq:Armijo}
          is achieved by
          $(\bm{x},\gamma) \coloneqq (\bm{x}_{n+1},\gamma_{n})$,
          i.e.,
          \begin{equation}
            (\cost_{n}+\phi)(\bm{x}_{n+1})
            \leq \NMV_{n} - c\gamma_{n} \norm{\frac{\bm{x}_{n}-\bm{x}_{n+1}}{\gamma_{n}}}^{2}
            =\NMV_{n} - \frac{c}{\gamma_{n}} \norm{\bm{x}_{n}-\bm{x}_{n+1}}^{2}, \label{eq:Armijo_assumption}
          \end{equation}
          where
          $\NMV_{n}$
          is given in~\eqref{eq:NMV}.
          Then, with
          $\kappa\in\mathbb{R}_{++}$
          in Def.~\ref{definition:smoothing}~\ref{enum:consistency:uniform},
          we have
          \begin{equation}
            \left(\cost_{n+1}+\phi\right)(\bm{x}_{n+1})
            \leq \NMV_{n+1}
            \leq \NMV_{n} - \ratio c\gamma_{n}\left(\mathcal{M}_{\gamma_{n}}^{\cost_{n},\phi}\left(\bm{x}_{n}\right)\right)^{2} + \kappa\left(\mu_{n}-\mu_{n+1}\right). \label{eq:function_measure_inequality_algorithm}
          \end{equation}
    \item
          \label{enum:lemma:stepsize:backtracking}
          Let
          $\Delta_{n} \coloneqq \min\{\gamma_{\rm init},(1-2c)L_{\nabla \cost_{n}}^{-1}\} \in (0,\gamma_{\rm init}]\ (n\in\mathbb{N})$
          with the Lipschitz constant
          $L_{\nabla \cost_{n}}>0$
          of
          $\nabla \cost_{n}$
          (see Definition~\ref{definition:smoothing}~\ref{enum:consistency:gradient_Lipschitz} and~\ref{enum:consistency:gradient_Lipschitz_constant}).
          Then, at each iteration
          $n$,
          the loop in lines~\ref{line:backtracking_start}-\ref{line:backtracking_end} of Algorithm~\ref{alg:proposed} is terminated within at most
          $l_{n}+1$
          iterations
          with
          $l_{n}\coloneqq \lceil \log_{\rho}(\Delta_{n}\gamma_{\rm init}^{-1})\rceil$.
          Hence,
          for every
          $n\in\mathbb{N}$,
          the Armijo-type condition
          $(n)$
          in~\eqref{eq:Armijo}
          is satisfied with
          $(\bm{x},\gamma) \coloneqq (\bm{x}_{n+1},\gamma_{n}) \in \dom{\phi}\times (0,\gamma_{\rm init}]$,
          i.e., the inequality~\eqref{eq:Armijo_assumption} holds,
          because of line~\ref{line:Armijo} of Algorithm~\ref{alg:proposed}.
    \item
          \label{enum:lemma:stepsize:lower_bound}
          For
          $(\gamma_{n})_{n=1}^{\infty}$
          generated by Algorithm~\ref{alg:proposed}
          and
          $\beta \coloneqq\min\left\{\gamma_{\rm init} L_{\nabla \cost_{1}}, \rho(1-2c)\right\} > 0$,
          we have
          $\gamma_{n} \geq \beta L_{\nabla \cost_{n}}^{-1}\ (n\in\mathbb{N})$.
  \end{enumerate}
\end{lemma}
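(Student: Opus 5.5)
For part~\ref{enum:Armijo_implies_inequality}, I would work directly from the recursion~\eqref{eq:NMV} for $\NMV_{n+1}$ together with the uniform bound in Definition~\ref{definition:smoothing}~\ref{enum:consistency:uniform}. Since $\mu_{n+1}\leq\mu_{n}$ and $\bm{x}_{n+1}\in\dom{\phi}\subset\mathrm{int}(\dom{\cost})$, that bound gives
\begin{equation}
  \cost_{n+1}(\bm{x}_{n+1})\leq\cost_{n}(\bm{x}_{n+1})+\kappa(\mu_{n}-\mu_{n+1}).
\end{equation}
Combined with~\eqref{eq:Armijo_assumption}, this yields
\begin{equation}
  (\cost_{n+1}+\phi)(\bm{x}_{n+1})\leq \NMV_{n}-\frac{c}{\gamma_{n}}\norm{\bm{x}_{n}-\bm{x}_{n+1}}^{2}+\kappa(\mu_{n}-\mu_{n+1})\leq \NMV_{n}+\kappa(\mu_{n}-\mu_{n+1}).
\end{equation}
Now $\NMV_{n+1}$ is the convex combination, with weight $\theta_{n+1}\in[\ratio,1]$, of $(\cost_{n+1}+\phi)(\bm{x}_{n+1})$ and $\NMV_{n}+\kappa(\mu_{n}-\mu_{n+1})$. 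The first inequality in~\eqref{eq:function_measure_inequality_algorithm} follows because the first quantity is at most the second.

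For the second inequality, I would insert the middle bound above into the convex combination, which gives
\begin{equation}
  \NMV_{n+1}\leq \NMV_{n}+\kappa(\mu_{n}-\mu_{n+1})-\theta_{n+1}\frac{c}{\gamma_{n}}\norm{\bm{x}_{n}-\bm{x}_{n+1}}^{2}.
\end{equation}
Then I would use $\theta_{n+1}\geq\ratio$. Since $\bm{x}_{n+1}\in\prox{\gamma_{n}\phi}(\bm{x}_{n}-\gamma_{n}\nabla\cost_{n}(\bm{x}_{n}))$ and $\Lsubdiff\cost_{n}(\bm{x}_{n})=\{\nabla\cost_{n}(\bm{x}_{n})\}$ (as $\cost_{n}$ is $C^{1}$ near $\bm{x}_{n}$), definition~\eqref{eq:measure_limiting} gives $\norm{(\bm{x}_{n}-\bm{x}_{n+1})/\gamma_{n}}\geq\mathcal{M}_{\gamma_{n}}^{\cost_{n},\phi}(\bm{x}_{n})$. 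Hence $\frac{c}{\gamma_{n}}\norm{\bm{x}_{n}-\bm{x}_{n+1}}^{2}\geq c\gamma_{n}(\mathcal{M}_{\gamma_{n}}^{\cost_{n},\phi}(\bm{x}_{n}))^{2}$, which is the claimed bound.

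For part~\ref{enum:lemma:stepsize:backtracking}, the key step is the observation that $\NMV_{n}\geq(\cost_{n}+\phi)(\bm{x}_{n})$ for every $n$. This holds for $n=1$ by definition and propagates by the first inequality of part~\ref{enum:Armijo_implies_inequality}, applied inductively once step $n$ has terminated. It then suffices to show that, for any $\gamma\in(0,\Delta_{n}]$ and any $\bm{p}\in\prox{\gamma\phi}(\bm{x}_{n}-\gamma\nabla\cost_{n}(\bm{x}_{n}))$, the monotone condition $(\cost_{n}+\phi)(\bm{p})\leq(\cost_{n}+\phi)(\bm{x}_{n})-\frac{c}{\gamma}\norm{\bm{x}_{n}-\bm{p}}^{2}$ holds. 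To prove this, I would compare the prox objective at $\bm{p}$ and at $\bm{x}_{n}$, which gives
\begin{equation}
  \phi(\bm{p})+\inprod{\nabla\cost_{n}(\bm{x}_{n})}{\bm{p}-\bm{x}_{n}}+\frac{1}{2\gamma}\norm{\bm{p}-\bm{x}_{n}}^{2}\leq\phi(\bm{x}_{n}).
\end{equation}
I would then add the descent lemma for $\cost_{n}$ on the convex set $\mathrm{int}(\dom{\cost})$, which contains both points. The result is
\begin{equation}
  (\cost_{n}+\phi)(\bm{p})\leq(\cost_{n}+\phi)(\bm{x}_{n})-\Bigl(\frac{1}{2\gamma}-\frac{L_{\nabla\cost_{n}}}{2}\Bigr)\norm{\bm{p}-\bm{x}_{n}}^{2},
\end{equation}
and $\frac{1}{2\gamma}-\frac{L_{\nabla\cost_{n}}}{2}\geq\frac{c}{\gamma}$ exactly when $\gamma\leq(1-2c)/L_{\nabla\cost_{n}}$. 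Since $\rho^{l}\gamma_{\rm init}\leq\Delta_{n}$ once $l\geq l_{n}$, the loop stops after at most $l_{n}+1$ trials. The main subtlety I expect is ordering this induction correctly: termination at step $n$ requires $\NMV_{n}\geq(\cost_{n}+\phi)(\bm{x}_{n})$, which comes from part~\ref{enum:Armijo_implies_inequality} at step $n-1$, which in turn presupposes termination at step $n-1$.

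For part~\ref{enum:lemma:stepsize:lower_bound}, I would distinguish two cases. If the loop accepts at $l=0$, then $\gamma_{n}=\gamma_{\rm init}$, and since $L_{\nabla\cost_{n}}=\varpi_{1}+\varpi_{2}\mu_{n}^{-1}$ is nondecreasing in $n$, we get $\gamma_{\rm init}\geq\gamma_{\rm init}L_{\nabla\cost_{1}}L_{\nabla\cost_{n}}^{-1}$. Otherwise, the trial $\gamma_{n}/\rho$ failed, so part~\ref{enum:lemma:stepsize:backtracking} forces $\gamma_{n}/\rho>(1-2c)L_{\nabla\cost_{n}}^{-1}$, that is, $\gamma_{n}>\rho(1-2c)L_{\nabla\cost_{n}}^{-1}$. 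In either case $\gamma_{n}\geq\beta L_{\nabla\cost_{n}}^{-1}$.
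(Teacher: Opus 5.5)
Your proposal is correct and follows essentially the same route as the paper: the convex-combination argument for (a) using the uniform bound in Definition~\ref{definition:smoothing}~\ref{enum:consistency:uniform}, the induction on $\NMV_{n}\geq(\cost_{n}+\phi)(\bm{x}_{n})$ combined with a sufficient-decrease inequality for (b), and the two cases ($\gamma_{n}=\gamma_{\rm init}$ versus a failed trial at $\rho^{-1}\gamma_{n}$) for (c). The only difference is that you derive the sufficient-decrease inequality directly from the prox-optimality comparison plus the descent lemma, whereas the paper cites it as Fact~\ref{fact:Armijo}; your derivation also covers the relaxed domain setting that the paper's footnote asserts without proof.
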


The proof of Lemma~\ref{lemma:stepsize} relies on Fact~\ref{fact:Armijo} (see~\cite[Lemma 2]{Bolte-Sabach-Teboulle14}\footnote{
  Strictly speaking, \cite[Lemma 2]{Bolte-Sabach-Teboulle14} assumes
  $\mathrm{int}(\dom{J}) = \mathcal{X}$
  and
  $\inf_{\bm{x} \in \mathcal{X}} \phi(\bm{x}) > -\infty$.
  Nevertheless, the same discussion in the proof of~\cite[Lemma 2]{Bolte-Sabach-Teboulle14} can apply to a relaxed case under the setting in Fact~\ref{fact:Armijo}.
},~\cite[Lemma 2.1]{Themelis-Stella-Patrinos18}).

\begin{fact}[Sufficient decrease property  with smooth function $J$]
  \label{fact:Armijo}
  Let
  $\phi:\mathcal{X} \to \exR$
  satisfy Assumption~\ref{assumption:basic}~\ref{enum:problem:origin:phi}
  and let
  $J:\mathcal{X}\to\exR$
  be a continuously differentiable function over
  $\mathrm{int}(\dom{J})$
  such that
  $\mathrm{int}(\dom{J})$
  is convex,
  $\mathrm{int}(\dom{J})\supset \dom{\phi}$
  and
  $\nabla J:\mathrm{int}(\dom{J})\to\mathcal{X}$
  is Lipschitz continuous with a Lipschitz constant
  $L_{\nabla J} >0$
  over
  $\mathrm{int}(\dom{J})$.
  Then,
  for
  $c \in (0,2^{-1})$,
  $\gamma_{\rm init} \in (0,\gamma_{\phi})$,
  and
  $\Delta \coloneqq \min\{\gamma_{\rm init},(1-2c)L_{\nabla J}^{-1}\}$,
  the following Armijo condition holds:
  \begin{align}
     & \left(\forall \widebar{\bm{x}}\in \dom{\phi},\; \forall \gamma \in (0,\Delta],\; \forall \bm{p} \in \prox{\gamma\phi}(\widebar{\bm{x}}-\gamma\nabla J(\widebar{\bm{x}}))\right) \\
     & \hspace{15em}
    (J+\phi)(\bm{p})
    \leq (J+\phi)(\widebar{\bm{x}}) - c\gamma \norm{\frac{\widebar{\bm{x}}-\bm{p}}{\gamma}}^{2}.
    \label{eq:Armijo_general}
  \end{align}
\end{fact}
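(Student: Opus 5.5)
This is the standard sufficient decrease argument for proximal gradient steps, which the paper cites from \cite[Lemma 2]{Bolte-Sabach-Teboulle14}. I will prove it directly via the descent lemma and the variational definition of $\prox{\gamma\phi}$.

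Fix $\widebar{\bm{x}}\in\dom{\phi}$, $\gamma\in(0,\Delta]$ and $\bm{p}\in\prox{\gamma\phi}(\widebar{\bm{x}}-\gamma\nabla J(\widebar{\bm{x}}))$. Such a $\bm{p}$ exists by Fact~\ref{fact:prox_set_mapping}~\ref{enum:fact:prox_set_mapping:nonempty_resolvent}, since $\gamma\le\gamma_{\rm init}<\gamma_{\phi}$. Moreover $\bm{p}\in\dom{\phi}\subset\mathrm{int}(\dom{J})$.

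\textbf{Step 1 (prox inequality).} Comparing the prox objective at its minimizer $\bm{p}$ with its value at the feasible point $\widebar{\bm{x}}$ gives
\begin{equation*}
\phi(\bm{p})+\frac{1}{2\gamma}\norm{\bm{p}-\widebar{\bm{x}}+\gamma\nabla J(\widebar{\bm{x}})}^{2}\le \phi(\widebar{\bm{x}})+\frac{\gamma}{2}\norm{\nabla J(\widebar{\bm{x}})}^{2}.
\end{equation*}
Expanding the square and cancelling $\frac{\gamma}{2}\norm{\nabla J(\widebar{\bm{x}})}^{2}$ yields
\begin{equation*}
\phi(\bm{p})\le\phi(\widebar{\bm{x}})-\inprod{\nabla J(\widebar{\bm{x}})}{\bm{p}-\widebar{\bm{x}}}-\frac{1}{2\gamma}\norm{\bm{p}-\widebar{\bm{x}}}^{2}.
\end{equation*}
Both $\phi(\widebar{\bm{x}})$ and $\phi(\bm{p})$ are finite here, so this manipulation is legitimate.

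\textbf{Step 2 (descent lemma).} Both $\widebar{\bm{x}}$ and $\bm{p}$ lie in the convex set $\mathrm{int}(\dom{J})$, on which $\nabla J$ is $L_{\nabla J}$-Lipschitz. Hence the whole segment between them lies in $\mathrm{int}(\dom{J})$, and the descent lemma \cite[Lemma 5.7]{Beck17} applies:
\begin{equation*}
J(\bm{p})\le J(\widebar{\bm{x}})+\inprod{\nabla J(\widebar{\bm{x}})}{\bm{p}-\widebar{\bm{x}}}+\frac{L_{\nabla J}}{2}\norm{\bm{p}-\widebar{\bm{x}}}^{2}.
\end{equation*}

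\textbf{Step 3 (combine).} Adding the two inequalities cancels the inner products:
\begin{equation*}
(J+\phi)(\bm{p})\le(J+\phi)(\widebar{\bm{x}})-\Bigl(\frac{1}{2\gamma}-\frac{L_{\nabla J}}{2}\Bigr)\norm{\bm{p}-\widebar{\bm{x}}}^{2}.
\end{equation*}
The target decrease is $c\gamma\norm{(\widebar{\bm{x}}-\bm{p})/\gamma}^{2}=\frac{c}{\gamma}\norm{\bm{p}-\widebar{\bm{x}}}^{2}$. It therefore suffices that
\begin{equation*}
\frac{1}{2\gamma}-\frac{L_{\nabla J}}{2}\ge\frac{c}{\gamma},
\end{equation*}
which is equivalent to $\gamma\le(1-2c)L_{\nabla J}^{-1}$. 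This holds because $\gamma\le\Delta$.

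The only step needing care is the domain issue in Step 2: $J$ may take the value $+\infty$ outside $\mathrm{int}(\dom{J})$. This is handled by the convexity of $\mathrm{int}(\dom{J})$ together with $\bm{p},\widebar{\bm{x}}\in\dom{\phi}\subset\mathrm{int}(\dom{J})$. No lower bound on $\phi$ is needed, only prox-boundedness to guarantee that $\bm{p}$ exists.
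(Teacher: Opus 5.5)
Your proof is correct and follows the same route as the argument the paper relies on. The paper does not reproduce a proof; it cites \cite[Lemma 2]{Bolte-Sabach-Teboulle14}, whose proof compares the prox objective at $\bm{p}$ with its value at $\widebar{\bm{x}}$, applies the descent lemma, and adds the two inequalities, exactly as you do. Your use of the convexity of $\mathrm{int}(\dom{J})\supset\dom{\phi}$ for the descent lemma, and your observation that no lower bound on $\phi$ is needed, are precisely the relaxations the paper's footnote alludes to.
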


\begin{proof}[Proof of Lemma~\ref{lemma:stepsize}]
  \ref{enum:Armijo_implies_inequality}
  By the condition~\ref{enum:consistency:uniform} in Definition~\ref{definition:smoothing},
  we have
  \begin{equation}
    (n\in \mathbb{N}) \quad
    \left(\cost_{n+1}+\phi\right)\left(\bm{x}_{n+1}\right)
    \leq
    \left(\cost_{n}+\phi\right)\left(\bm{x}_{n+1}\right) + \kappa\left(\mu_{n}-\mu_{n+1}\right) \label{eq:function_inequality}
  \end{equation}
  with
  $\cost_{n} = \cost^{\langle \mu_{n} \rangle}$.
  With
  $\theta_{n+1} \in [\ratio ,1]$,
  the first inequality in~\eqref{eq:function_measure_inequality_algorithm} follows from
    {
      \thickmuskip=0.0\thickmuskip
      \medmuskip=0.0\medmuskip
      \thinmuskip=0.0\thinmuskip
      \arraycolsep=0.0\arraycolsep
      \begin{align*}
         & \NMV_{n+1}
        \overset{\eqref{eq:NMV}}{=}\theta_{n+1}\left(\cost_{n+1}+\phi\right)\left(\bm{x}_{n+1}\right) + \left(1-\theta_{n+1}\right)\left(\NMV_{n}+\kappa\left(\mu_{n}-\mu_{n+1}\right)\right) \\
         & \overset{\eqref{eq:Armijo_assumption}}{\geq}\theta_{n+1}\left(\cost_{n+1}+\phi\right)(\bm{x}_{n+1})
        + (1-\theta_{n+1})\left(\hspace{-0.25em}(\cost_{n}+\phi)(\bm{x}_{n+1}) + \hspace{-0.1em}\frac{c}{\gamma_{n}}\norm{\bm{x}_{n}-\bm{x}_{n+1}}^{2}+\kappa(\mu_{n}-\mu_{n+1})\hspace{-0.25em}\right) \\
         & \overset{\eqref{eq:function_inequality}}{\geq}\theta_{n+1}\left(\cost_{n+1}+\phi\right)\left(\bm{x}_{n+1}\right) + (1-\theta_{n+1})\left((\cost_{n+1}+\phi)\left(\bm{x}_{n+1}\right) + \frac{c}{\gamma_{n}}\norm{\bm{x}_{n}-\bm{x}_{n+1}}^{2}\right) \\
         & = \left(\cost_{n+1}+\phi\right)\left(\bm{x}_{n+1}\right) + \frac{c(1-\theta_{n+1})}{\gamma_{n}}\norm{\bm{x}_{n}-\bm{x}_{n+1}}^{2}
        \geq \left(\cost_{n+1}+\phi\right)\left(\bm{x}_{n+1}\right).
      \end{align*}
    }%
  The second inequality in~\eqref{eq:function_measure_inequality_algorithm} follows from
  \begin{align*}
     & \NMV_{n+1}
    \overset{\eqref{eq:NMV}}{=}\theta_{n+1}\left(\cost_{n+1}+\phi\right)\left(\bm{x}_{n+1}\right) + (1-\theta_{n+1})\left(\NMV_{n}+\kappa\left(\mu_{n}-\mu_{n+1}\right)\right) \\
     & \overset{\eqref{eq:function_inequality}}{\leq}\theta_{n+1}\left(\left(\cost_{n}+\phi\right)\left(\bm{x}_{n+1}\right) + \kappa\left(\mu_{n}-\mu_{n+1}\right)\right) + (1-\theta_{n+1})\left(\NMV_{n}+ \kappa\left(\mu_{n}-\mu_{n+1}\right)\right) \\
     & \overset{\hphantom{\eqref{eq:function_inequality}}}{=}\theta_{n+1}\left(\cost_{n}+\phi\right)\left(\bm{x}_{n+1}\right) + (1-\theta_{n+1})\NMV_{n}+\kappa\left(\mu_{n}-\mu_{n+1}\right) \\
     & \overset{\eqref{eq:Armijo_assumption}}{\leq}\theta_{n+1}\left(\NMV_{n} -c\gamma_{n}\norm{\frac{\bm{x}_{n}-\bm{x}_{n+1}}{\gamma_{n}}}^{2}\right) + (1-\theta_{n+1})\NMV_{n}+\kappa\left(\mu_{n}-\mu_{n+1}\right) \\
     & \overset{\hphantom{\eqref{eq:Armijo}}}{=} \NMV_{n} -\theta_{n+1}c\gamma_{n}\norm{\frac{\bm{x}_{n}-\bm{x}_{n+1}}{\gamma_{n}}}^{2}  + \kappa\left(\mu_{n}-\mu_{n+1}\right) \\
     & \overset{\eqref{eq:measure_limiting}}{\leq} \NMV_{n} -\ratio c\gamma_{n}\left(\mathcal{M}_{\gamma_{n}}^{\cost_{n},\phi}\left(\bm{x}_{n}\right)\right)^{2}  + \kappa\left(\mu_{n}-\mu_{n+1}\right). \quad
    (\because \ratio \leq\theta_{n+1})
  \end{align*}

  \ref{enum:lemma:stepsize:backtracking}
  By Definition~\ref{definition:smoothing}~\ref{enum:consistency:gradient_Lipschitz},
  $\nabla \cost_{n}=\nabla \cost^{\langle \mu_{n} \rangle}$
  is
  $L_{\nabla \cost_{n}}$-Lipschitz continuous over the convex set
  $\mathrm{int}(\dom{\cost_{n}}) = \mathrm{int}(\dom{\cost})$
  with some
  $L_{\nabla \cost_{n}} > 0$.
  Hence, by applying Fact~\ref{fact:Armijo} with
  $J=\cost_{n}$
  and
  $\widebar{\bm{x}} = \bm{x}_{n} \in \dom{\phi}$,
  we get
  \begin{align}
     & (\forall\gamma \in (0,\Delta_{n}],\ \forall\bm{p} \in \prox{\gamma\phi}(\bm{x}_{n} - \gamma \nabla \cost_{n}(\bm{x}_{n}))) \\
     & \hspace{10em}
    (\cost_{n}+\phi)(\bm{p}) \leq  (\cost_{n}+\phi)(\bm{x}_{n}) - c\gamma \norm{\frac{\bm{x}_{n}-\bm{p}}{\gamma}}^{2}.
    \label{eq:Armijo_standard}
  \end{align}

  We show the statement by induction.

  Let
  $n=1$.
  Then,
  we have
  $\NMV_{1} = (\cost_{1}+\phi)(\bm{x}_{1})$
  from~\eqref{eq:NMV}.
  By~\eqref{eq:Armijo_standard},
  $(\cost_{1}+\phi)(\bm{p}) \leq (\cost_{1}+\phi)(\bm{x}_{1})- c\gamma \norm{(\bm{x}_{1}-\bm{p})/\gamma}^{2}= \NMV_{1} - c\gamma \norm{(\bm{x}_{1}-\bm{p})/\gamma}^{2}$
  holds for
  $\gamma\coloneqq \rho^{l_{1}}\gamma_{\rm init} \in (0,\Delta_{1}]$
  with
  $l_{1}= \lceil \log_{\rho}(\Delta_{1}\gamma_{\rm init}^{-1})\rceil$,
  and for every
  $\bm{p} \in \prox{\gamma\phi}(\bm{x}_{1} - \gamma \nabla \cost_{1}(\bm{x}_{1}))$.
  Hence, for
  $n = 1$,
  the loop in lines~\ref{line:backtracking_start}-\ref{line:backtracking_end} of Algorithm~\ref{alg:proposed} is terminated within at most
  $l_{1}+1$
  iterations.

  Let
  $n>1$,
  and assume the inductive hypothesis:
  the Armijo-type condition
  $(n-1)$
  in~\eqref{eq:Armijo} is achieved by
  $(\bm{x},\gamma)\coloneqq (\bm{x}_{n},\gamma_{n-1})$.
  Then, the first inequality of~\eqref{eq:function_measure_inequality_algorithm} in
  Lemma~\ref{lemma:stepsize}~\ref{enum:Armijo_implies_inequality} yields
  $\left(\cost_{n}+\phi\right)(\bm{x}_{n}) \leq \NMV_{n}$.
  Together with~\eqref{eq:Armijo_standard},
  we get
  $(\cost_{n}+\phi)(\bm{p}) \leq \left(\cost_{n}+\phi\right)(\bm{x}_{n}) - c\gamma \norm{(\bm{x}_{n}-\bm{p})/\gamma}^{2} \leq \NMV_{n} - c\gamma \norm{(\bm{x}_{n}-\bm{p})/\gamma}^{2}$
  for
  $\gamma \coloneqq \rho^{l_{n}}\gamma_{\rm init} \in (0,\Delta_{n}]$
  with
  $l_{n} = \lceil \log_{\rho}(\Delta_{n}\gamma_{\rm init}^{-1})\rceil$
  and for every
  $\bm{p} \in \prox{\gamma\phi}(\bm{x}_{n} - \gamma \nabla \cost_{n}(\bm{x}_{n}))$.
  Hence,
  for
  $n > 1$,
  the loop in lines~\ref{line:backtracking_start}-\ref{line:backtracking_end} of Algorithm~\ref{alg:proposed} is terminated within at most
  $l_{n}+1$
  iterations.

  \ref{enum:lemma:stepsize:lower_bound}
  For every
  $n \in \mathbb{N}$,
  consider two cases:
  $\gamma_{n} = \gamma_{\rm init}$
  and
  $\gamma_{n} \leq \rho \gamma_{\rm init}$.
  For the case
  $\gamma_{n} = \gamma_{\rm init}$,
  we have
  $\gamma_{n}=\gamma_{\rm init}L_{\nabla \cost_{n}}L_{\nabla \cost_{n}}^{-1} \geq \gamma_{\rm init}L_{\nabla \cost_{1}}L_{\nabla \cost_{n}}^{-1} \geq \beta L_{\nabla \cost_{n}}^{-1}$,
  where
  $L_{\nabla \cost_{n}} \geq L_{\nabla \cost_{1}}$
  follows from
  $L_{\nabla \cost_{n}} = \varpi_{1} + \varpi_{2}\mu_{n}^{-1}$
  (see Definition~\ref{definition:smoothing}~\ref{enum:consistency:gradient_Lipschitz_constant})
  and from
  $\mu_{n} \leq \mu_{1}$
  (see the condition (iii) in~\eqref{eq:smoothing_parameter}).
  Consider the other case
  $\gamma_{n} \leq \rho \gamma_{\rm init}$.
  By the procedure in lines~\ref{line:backtracking_start}-\ref{line:backtracking_end} of Algorithm~\ref{alg:proposed},
  the Armijo-type condition $(n)$ in~\eqref{eq:Armijo} does not hold for
  $\gamma \coloneqq \rho^{-1}\gamma_{n}$
  and for some
  $\bm{x} \in \prox{\gamma\phi}(\bm{x}_{n}-\gamma\nabla\cost_{n}(\bm{x}_{n}))$.
  Hence,
  $\Delta_{n} < \rho^{-1}\gamma_{n}$
  must hold by~\eqref{eq:Armijo_standard}.
  Together with
  $\gamma_{n} \leq \rho \gamma_{\rm init}$,
  we get
  $\Delta_{n} < \gamma_{\rm init}$,
  and thus
  $\Delta_{n} (=\min\{\gamma_{\rm init},(1-2c)L_{\nabla \cost_{n}}^{-1}\}) = (1-2c)L_{\nabla \cost_{n}}^{-1}$.
  Consequently, we get
  $\gamma_{n} > \rho\Delta_{n} = \rho(1-2c) L_{\nabla \cost_{n}}^{-1} \geq \beta L_{\nabla \cost_{n}}^{-1}$.
\end{proof}

Finally, we present an asymptotic convergence analysis of Algorithm~\ref{alg:proposed} below.

\begin{theorem}[Convergence analysis of Algorithm~\ref{alg:proposed}]
  \label{theorem:convergence_extension}
  Let
  $\cost + \phi$
  satisfy Assumptions~\ref{assumption:basic} and~\ref{assumption:regular},
  and
  $\{\cost^{\langle \mu \rangle}\}_{\mu \in (0,\widetilde{\mu})}$
  with some
  $\widetilde{\mu} \in \exRp$
  be a smoothing function of
  $\cost$.
  Choose arbitrarily
  $\bm{x}_{1} \in \dom{\phi}$,
  $c \in (0,2^{-1})$,
  $\rho \in (0,1)$,
  $\gamma_{\rm init} \in (0,\gamma_{\phi})$,
  and
  $\ratio \in (0,1)$.
  Then,
  for
  $\cost_{n}\coloneqq \cost^{\langle \mu_{n}\rangle}\ (n\in \mathbb{N})$
  with
  $(\mu_{n})_{n=1}^{\infty} \subset (0,\widetilde{\mu})$
  satisfying~\eqref{eq:smoothing_parameter},
  and
  for
  $(\bm{x}_{n})_{n=1}^{\infty} \subset \dom{\phi}$
  generated by Algorithm~\ref{alg:proposed}, the following hold:
  \begin{enumerate}[label=(\alph*)]
    \item
          \label{enum:theorem:convergence_extension:NMV}
          For
          $\widehat{\NMV}_{n} \coloneqq \NMV_{n} + \kappa\mu_{n}\ (n\in\mathbb{N})$
          with
          $\NMV_{n} \in \mathbb{R}$
          in~\eqref{eq:NMV}, and
          $\kappa \in \mathbb{R}_{++}$
          given in Definition~\ref{definition:smoothing}~\ref{enum:consistency:uniform},
          we have
            {
              \thickmuskip=0.3\thickmuskip
              \medmuskip=0.3\medmuskip
              \thinmuskip=0.3\thinmuskip
              \arraycolsep=0.3\arraycolsep
              \begin{align}
                (n\in\mathbb{N}) \quad &
                \widehat{\NMV}_{n} - \widehat{\NMV}_{n+1} \geq
                \ratio c\gamma_{n}\left(\mathcal{M}_{\gamma_{n}}^{\cost_{n},\phi}\left(\bm{x}_{n}\right)\right)^{2} \geq
                0, \label{eq:measure_NMV} \\
                (n\in\mathbb{N}) \quad &
                \widehat{\NMV}_{n} \geq (\cost + \phi)(\bm{x}_{n}) \geq \inf_{\bm{x}\in\mathcal{X}} (\cost+\phi)(\bm{x}) > -\infty,
                \label{eq:measure_NMV_inf} \\
                (n\in\mathbb{N}) \quad &
                \bm{x}_{n} \in \lev{(\cost+\phi)}{\widehat{\NMV}_{1}}\coloneqq \{\bm{x} \in \mathcal{X} \mid (\cost + \phi)(\bm{x}) \leq \widehat{\NMV}_{1}\} \subset \dom{\phi}.
                \label{eq:level_set}
              \end{align}
            }%
    \item
          \label{enum:theorem:convergence_extension:rate}
          For any pair
          $(\underline{k}, \overline{k}) \in \mathbb{N}^{2}$
          satisfying
          $\underline{k} \leq \overline{k}$,
          we have
          \begin{equation}
            \sum_{n=\underline{k}}^{\overline{k}} \mu_{n}
            \left(\mathcal{M}_{\gamma_{n}}^{\cost_{n},\phi}(\bm{x}_{n})\right)^{2} \leq
            \chi, \quad
            \min_{\underline{k}\leq n \leq \overline{k}}
            \mathcal{M}_{\gamma_{n}}^{\cost_{n},\phi}(\bm{x}_{n})
            \leq
            \sqrt{\frac{\chi}{\sum_{n=\underline{k}}^{\overline{k}}\mu_{n}}},
            \label{eq:convergence_rate}
          \end{equation}
          where
          $\chi\coloneqq \frac{\left((\cost+\phi)(\bm{x}_{1}) + \kappa\mu_{1} - \inf_{\bm{x}\in\mathcal{X}} (\cost +\phi)(\bm{x})\right)(\varpi_{1} + \mu_{1}^{-1}\varpi_{2})}{\ratio c\beta\mu_{1}^{-1}} \in \mathbb{R}_{++}$
          is given with
          $\varpi_{1} \in \mathbb{R}_{+}$ and
          $\varpi_{2} \in \mathbb{R}_{++}$
          in Definition~\ref{definition:smoothing}~\ref{enum:consistency:gradient_Lipschitz_constant}, and
          $\beta \in \mathbb{R}_{++}$
          in Lemma~\ref{lemma:stepsize}~\ref{enum:lemma:stepsize:lower_bound}.
          In particular, by the second inequality of~\eqref{eq:convergence_rate}, we get a convergence rate:
          \begin{equation*}
            \min\limits_{1 \leq n \leq k} \mathcal{M}_{\gamma_{n}}^{\cost_{n},\phi}(\bm{x}_{n}) = \mathcal{O}\left(k^{-\frac{\alpha-1}{2\alpha}}\right)\
            \mathrm{as}\ k\to\infty
          \end{equation*}
          if
          $(\mu_{n})_{n=1}^{\infty} \coloneqq (\tau n^{-1/\alpha})_{n=1}^{\infty}\ (\tau \in (0,\widetilde{\mu}),\ \alpha > 1)$
          is employed because we have
          $\sum_{n=1}^{k} \mu_{n} \geq \frac{\tau }{1-\alpha^{-1}}((k+1)^{\frac{\alpha-1}{\alpha}} - 1)$
          (see, e.g.,~\cite[(27) in Exm. 4.6]{Kume-Yamada24A}), where
          $\mathcal{O}(\cdot)$
          stands for the Landau's big-O notation.
    \item
          \label{enum:liminf}
          ${\liminf_{n\to\infty} \mathcal{M}_{\gamma_{n}}^{\cost_{n},\phi}(\bm{x}_{n}) = 0}$
          holds.
          Moreover, choose a subsequence
          $(\bm{x}_{m(l)})_{l=1}^{\infty}$
          of
          $(\bm{x}_{n})_{n=1}^{\infty}\subset \dom{\phi}$
          such that
          $\lim_{l\to\infty} \mathcal{M}_{\gamma_{m(l)}}^{\cost_{m(l)},\phi}(\bm{x}_{m(l)}) = 0$,
          where
          $m:\mathbb{N}\to\mathbb{N}$
          is monotonically increasing.
          Then,
          every cluster point
          $\widebar{\bm{x}} \in \mathcal{X}$
          of
          $(\bm{x}_{m(l)})_{l=1}^{\infty}$
          belongs to
          $\dom{\phi}$.
          Moreover,
          such a cluster point
          $\widebar{\bm{x}} \in \mathcal{X}$
          is an {\rm F}-stationary point of
          $\cost+\phi$
          if Assumption~\ref{assumption:phi} is additionally satisfied.

  \end{enumerate}
\end{theorem}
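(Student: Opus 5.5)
The argument rests on Lemma~\ref{lemma:stepsize}: item (a) is a telescoping-type reformulation of its inequality~\eqref{eq:function_measure_inequality_algorithm}, item (b) sums that inequality using the stepsize lower bound of Lemma~\ref{lemma:stepsize}~\ref{enum:lemma:stepsize:lower_bound}, and item (c) combines (b) with $\sum\mu_n=+\infty$ and Theorem~\ref{theorem:asymptotic_approximation}.

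For~\ref{enum:theorem:convergence_extension:NMV}, by Lemma~\ref{lemma:stepsize}~\ref{enum:lemma:stepsize:backtracking} the Armijo-type condition $(n)$ holds for every $n$. Lemma~\ref{lemma:stepsize}~\ref{enum:Armijo_implies_inequality} then gives
\[
\NMV_{n+1}\le \NMV_n-\ratio c\gamma_n(\mathcal{M}_{\gamma_n}^{\cost_n,\phi}(\bm{x}_n))^2+\kappa(\mu_n-\mu_{n+1}).
\]
Adding $\kappa\mu_{n+1}$ to both sides yields~\eqref{eq:measure_NMV}. For~\eqref{eq:measure_NMV_inf}, note that $(\cost_n+\phi)(\bm{x}_n)\le\NMV_n$: for $n=1$ this is equality, and for $n>1$ it is the first inequality of~\eqref{eq:function_measure_inequality_algorithm}. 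Combining this with~\eqref{eq:uniform_bounded_smoothing_original}, i.e.\ $\cost\le\cost_n+\kappa\mu_n$ on $\mathrm{int}(\dom{\cost})\supset\dom{\phi}$, gives $(\cost+\phi)(\bm{x}_n)\le\widehat{\NMV}_n$; the lower bound is Assumption~\ref{assumption:basic}. Since $(\widehat{\NMV}_n)$ is nonincreasing, $(\cost+\phi)(\bm{x}_n)\le\widehat{\NMV}_1$, which is~\eqref{eq:level_set}. The level set lies in $\dom{\phi}$ because $\dom{J}=\dom{\phi}$.

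For~\ref{enum:theorem:convergence_extension:rate}, Lemma~\ref{lemma:stepsize}~\ref{enum:lemma:stepsize:lower_bound} and $L_{\nabla\cost_n}=\varpi_1+\varpi_2\mu_n^{-1}$ give
\[
\gamma_n\ge \frac{\beta\mu_n}{\varpi_1\mu_n+\varpi_2}\ge \frac{\beta\mu_n}{\varpi_1\mu_1+\varpi_2}=\mu_n\,\frac{\beta\mu_1^{-1}}{\varpi_1+\varpi_2\mu_1^{-1}}.
\]
Substituting into~\eqref{eq:measure_NMV} and summing from $\underline{k}$ to $\overline{k}$ telescopes. The sum is bounded by $\widehat{\NMV}_{\underline{k}}-\widehat{\NMV}_{\overline{k}+1}\le\widehat{\NMV}_1-\inf(\cost+\phi)$, where $\widehat{\NMV}_1=(\cost_1+\phi)(\bm{x}_1)+\kappa\mu_1\le(\cost+\phi)(\bm{x}_1)+\kappa\mu_1$ by~\eqref{eq:uniform_bounded_smoothing_original}. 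This gives the first inequality with the stated $\chi$. The second follows by bounding the sum below by $\min_n(\mathcal{M}_{\gamma_n}^{\cost_n,\phi}(\bm{x}_n))^2\sum_n\mu_n$, and the rate follows from the quoted estimate on $\sum\mu_n$.

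For~\ref{enum:liminf}, since $\sum_{n\ge\underline{k}}\mu_n=+\infty$ for every $\underline{k}$ by~\eqref{eq:smoothing_parameter}, the second inequality of~\eqref{eq:convergence_rate} with $\overline{k}\to\infty$ gives $\inf_{n\ge\underline{k}}\mathcal{M}_{\gamma_n}^{\cost_n,\phi}(\bm{x}_n)=0$ for all $\underline{k}$, hence the liminf is $0$.

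Next let $\widebar{\bm{x}}$ be a cluster point of $(\bm{x}_{m(l)})$, reached along a further subsequence. By~\eqref{eq:level_set} and lower semicontinuity of $\cost+\phi$ (a sum of proper lsc functions bounded below), $\widebar{\bm{x}}\in\lev{(\cost+\phi)}{\widehat{\NMV}_1}\subset\dom{\phi}$. I expect this step to be the main point to check, since it needs closedness of the level set even though $\cost$ is $+\infty$ outside its domain; lower semicontinuity of both summands handles it.

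For F-stationarity, pass to a further subsequence along which $\gamma_{m(l)}\to\widebar{\gamma}\in[0,\gamma_{\rm init}]$. The $\bm{x}_{m(l)}$ lie in $\dom{\phi}\subset\mathrm{int}(\dom{\cost})$ and converge to $\widebar{\bm{x}}\in\mathrm{int}(\dom{\cost})$, and $\mu_{m(l)}\searrow0$. Condition~\ref{enum:corollary:characterization_stationarity_asymptotic:positive} of Theorem~\ref{theorem:characterization_stationarity_asymptotic} therefore holds with $\widetilde{\gamma}\coloneqq\gamma_{\rm init}<\gamma_{\phi}$, so $\widebar{\bm{x}}$ is F-stationary under Assumptions~\ref{assumption:regular} and~\ref{assumption:phi}. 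Equivalently, one can apply Theorem~\ref{theorem:asymptotic_approximation} directly, using the case $\widebar{\gamma}>0$ or $\widebar{\gamma}=0$ as appropriate.
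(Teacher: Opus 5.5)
Your proposal is correct and follows essentially the same route as the paper. Part (a) comes from Lemma~\ref{lemma:stepsize} combined with \eqref{eq:uniform_bounded_smoothing_original}; part (b) from telescoping \eqref{eq:measure_NMV} with the stepsize bound $\gamma_n\ge\beta L_{\nabla\cost_n}^{-1}$; and part (c) from $\sum\mu_n=+\infty$, closedness of the level set, and Theorem~\ref{theorem:characterization_stationarity_asymptotic}. The differences are cosmetic: you read off the vanishing $\liminf$ directly from the second inequality of \eqref{eq:convergence_rate} instead of arguing by contradiction from the first, and your extra extraction of a convergent subsequence of $(\gamma_{m(l)})$ is harmless but unnecessary, since Theorem~\ref{theorem:characterization_stationarity_asymptotic} already handles that internally.
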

\begin{proof}

  \ref{enum:theorem:convergence_extension:NMV}
  Recall that the inequality~\eqref{eq:Armijo_assumption} holds for all
  $n\in \mathbb{N}$
  from Lemma~\ref{lemma:stepsize}~\ref{enum:lemma:stepsize:backtracking}.
  Then, the second inequality of~\eqref{eq:function_measure_inequality_algorithm} in
  Lemma~\ref{lemma:stepsize}~\ref{enum:Armijo_implies_inequality} together with
  $\widehat{\NMV}_{n} = \NMV_{n} + \kappa\mu_{n}$
  yields
  $\widehat{\NMV}_{n+1} \leq \widehat{\NMV}_{n} - \ratio c\gamma_{n}(\mathcal{M}_{\gamma_{n}}^{\cost_{n},\phi}(\bm{x}_{n}))^{2}$,
  i.e.,
  the inequality in~\eqref{eq:measure_NMV} holds.
  From
  $(\cost_{1}+\phi)(\bm{x}_{1}) = \NMV_{1}$
  by~\eqref{eq:NMV}
  and
  $(\cost_{n+1}+\phi)(\bm{x}_{n+1}) \leq \NMV_{n+1}\ (n \in \mathbb{N})$
  by~\eqref{eq:function_measure_inequality_algorithm},
  we have
  $(\cost + \phi)(\bm{x}_{n}) \overset{\eqref{eq:uniform_bounded_smoothing_original}}{\leq} (\cost_{n}+\phi)(\bm{x}_{n}) + \kappa\mu_{n} \leq \NMV_{n} + \kappa\mu_{n} = \widehat{\NMV}_{n}\ (n\in\mathbb{N})$.
  By combining this inequality with
  $\inf_{\bm{x}\in \mathcal{X}} (\cost+\phi)(\bm{x}) > -\infty$
  in Assumption~\ref{assumption:basic}~\ref{enum:problem:origin:minimizer},
  we get the inequality in~\eqref{eq:measure_NMV_inf}.
  Together with~\eqref{eq:measure_NMV},
  we have
  $(\cost + \phi)(\bm{x}_{n}) \leq \widehat{\NMV}_{n}
    \leq \widehat{\NMV}_{1}\ (n\in\mathbb{N})$,
  i.e.,~\eqref{eq:level_set} follows.

  \ref{enum:theorem:convergence_extension:rate}
  By summing the inequality~\eqref{eq:measure_NMV} up from
  $n=\underline{k}$
  to
  $\overline{k}$,
  we have
  \begin{equation}
    \ratio c\sum_{n=\underline{k}}^{\overline{k}}  \gamma_{n}\left(\mathcal{M}_{\gamma_{n}}^{\cost_{n},\phi}(\bm{x}_{n})\right)^{2}  \leq \widehat{\NMV}_{\underline{k}} - \widehat{\NMV}_{\overline{k}+1}  \leq (\cost+\phi)(\bm{x}_{1}) + \kappa\mu_{1}
    - \inf_{\bm{x}\in \mathcal{X}}(\cost+\phi)(\bm{x}) <+\infty, \label{eq:extension:sum_gradient_bound}
  \end{equation}
  where
  we used
  $\widehat{\NMV}_{\underline{k}} \overset{\eqref{eq:measure_NMV}}{\leq} \widehat{\NMV}_{1} = \NMV_{1} + \kappa\mu_{1} \overset{\eqref{eq:NMV}}{=} (\cost_{1} + \phi)(\bm{x}_{1}) + \kappa\mu_{1} \overset{\eqref{eq:uniform_bounded_smoothing_original}}{\leq}(\cost+\phi)(\bm{x}_{1}) + \kappa\mu_{1}$,
  and
  $\widehat{\NMV}_{\overline{k}+1} \overset{\eqref{eq:measure_NMV_inf}}{\geq} \inf_{\bm{x}\in\mathcal{X}} (\cost+\phi)(\bm{x}) > -\infty$.
  By Lemma~\ref{lemma:stepsize}~\ref{enum:lemma:stepsize:lower_bound}, we have a lower bound
  $\gamma_{n} \geq \beta L_{\nabla \cost_{n}}^{-1}$
  with some
  $\beta \in \mathbb{R}_{++}$.
  Since we have
  $L_{\nabla \cost_{n}} = \varpi_{1} + \varpi_{2}\mu_{n}^{-1}$
  with
  $\varpi_{1} \in \mathbb{R}_{+},\ \varpi_{2} \in\mathbb{R}_{++}$
  in Definition~\ref{definition:smoothing}~\ref{enum:consistency:gradient_Lipschitz_constant},
  we get
  $\gamma_{n} \geq \beta L_{\nabla \cost_{n}}^{-1} = \frac{\beta\mu_{n}}{\varpi_{1}\mu_{n} + \varpi_{2}} =\frac{\beta\mu_{1}^{-1}\mu_{n}}{\varpi_{1}\mu_{1}^{-1}\mu_{n} + \mu_{1}^{-1}\varpi_{2}} \geq \frac{\beta\mu_{1}^{-1}}{\varpi_{1}+\mu_{1}^{-1}\varpi_{2}}\mu_{n}$
  for
  $n\in\mathbb{N}$,
  where the last inequality follows by
  $\mu_{1}^{-1}\mu_{n} \in (0,1]$
  from
  $\mu_{n} \leq \mu_{1}$
  (see the condition (iii) in~\eqref{eq:smoothing_parameter}).
  Then, the LHS in~\eqref{eq:extension:sum_gradient_bound} is bounded below as
  \begin{align}
    \ratio c\sum_{n=\underline{k}}^{\overline{k}} \gamma_{n}
    \left(\mathcal{M}_{\gamma_{n}}^{\cost_{n},\phi}(\bm{x}_{n})\right)^{2}
     & \geq \frac{\ratio c\beta\mu_{1}^{-1}}{\varpi_{1} + \mu_{1}^{-1}\varpi_{2}}\sum_{n=\underline{k}}^{\overline{k}} \mu_{n}
    \left(\mathcal{M}_{\gamma_{n}}^{\cost_{n},\phi}(\bm{x}_{n})\right)^{2} \\
     & \geq \frac{\ratio c\beta\mu_{1}^{-1}}{\varpi_{1} + \mu_{1}^{-1}\varpi_{2}}\min_{\underline{k}\leq n \leq \overline{k}}
    \left(\mathcal{M}_{\gamma_{n}}^{\cost_{n},\phi}(\bm{x}_{n})\right)^{2}
    \sum_{n=\underline{k}}^{\overline{k}} \mu_{n}. \label{eq:tmp_convergence}
  \end{align}
  By combining~\eqref{eq:extension:sum_gradient_bound} with~\eqref{eq:tmp_convergence}, we deduce the desired inequalities in~\eqref{eq:convergence_rate}.

  \ref{enum:liminf}
  Assume contrarily that
  $\liminf_{n\to\infty} \mathcal{M}_{\gamma_{n}}^{\cost_{n},\phi}(\bm{x}_{n}) > 0$,
  i.e., there exist
  $\epsilon > 0$
  and
  $n_{0} \in \mathbb{N}$
  such that
  $\mathcal{M}_{\gamma_{n}}^{\cost_{n},\phi}(\bm{x}_{n}) > \epsilon$
  for all
  $n\geq n_{0}$.
  By the first inequality in~\eqref{eq:convergence_rate}, we have
  $\chi \geq \sum_{n=n_{0}}^{\infty}\mu_{n} \left(\mathcal{M}_{\gamma_{n}}^{\cost_{n},\phi}(\bm{x}_{n})\right)^{2} > \sum_{n=n_{0}}^{\infty}\mu_{n}\epsilon^{2} > 0$.
  In contrast,
  by
  $\sum_{n=1}^{\infty}\mu_{n} = +\infty$
  (see the condition (ii) in~\eqref{eq:smoothing_parameter}),
  we have
  $\sum_{n=n_{0}}^{\infty}\mu_{n}\epsilon^{2} = +\infty$,
  which is absurd.

  By
  $\liminf_{n\to\infty} \mathcal{M}_{\gamma_{n}}^{\cost_{n},\phi}(\bm{x}_{n}) = 0$,
  there exists a subsequence
  $(\bm{x}_{m(l)})_{l=1}^{\infty}$
  of
  $(\bm{x}_{n})_{n=1}^{\infty}$
  satisfying
  $\lim_{l\to\infty} \mathcal{M}_{\gamma_{m(l)}}^{\cost_{m(l)},\phi}(\bm{x}_{m(l)}) = 0$.
  Let
  $\widebar{\bm{x}} \in \mathcal{X}$
  be a cluster point of
  $(\bm{x}_{m(l)})_{l=1}^{\infty}$.
  Without loss of generality,
  we can assume
  $\lim_{l\to\infty}\bm{x}_{m(l)} = \widebar{\bm{x}}$
  (by passing through further subsequence if necessary).
  Then,
  we get
  $\widebar{\bm{x}} \in\lev{(\cost+\phi)}{\widehat{\NMV}_{1}}\subset \dom{\phi}$
  because
  $\lev{(\cost+\phi)}{\widehat{\NMV}_{1}}$
  is closed by the lower semicontinuity of
  $\cost + \phi$,
  and
  $(\bm{x}_{m(l)})_{l=1}^{\infty} \subset \lev{(\cost+\phi)}{\widehat{\NMV}_{1}}$
  by~\eqref{eq:level_set}.
  We can verify that
  $\widebar{\bm{x}}$
  is an {\rm F}-stationary point of
  $\cost+\phi$
  under Assumption~\ref{assumption:phi} by applying Theorem~\ref{theorem:characterization_stationarity_asymptotic} with
  $(\gamma_{m(l)})_{l=1}^{\infty} \subset (0,\gamma_{\rm init}]$,
  $\lim_{l\to\infty} \bm{x}_{m(l)} = \widebar{\bm{x}} \in \dom{\phi}$,
  and
  $\lim_{l\to\infty} \mathcal{M}_{\gamma_{m(l)}}^{\cost_{m(l)},\phi}(\bm{x}_{m(l)}) = 0$.
\end{proof}

\section{Concluding remarks} \label{sec:conclusion}
In this paper, for minimization of the sum of two nonconvex nonsmooth functions, we presented asymptotic properties of the gradient mapping-type stationarity measure
$\mathcal{M}_{\gamma}^{\cost,\phi}$,
and characterizations between the stationarities and
$\mathcal{M}_{\gamma}^{\cost,\phi}$.
By exploiting these properties, we revisited and refined convergence results for conventional proximal gradient algorithms.
More specifically, we provided an affirmative answer to an open question, raised by~\cite{Olikier-Waldspurger25}, on whether every cluster point of a sequence generated by the proximal gradient method under Assumptions~\ref{assumption:basic} and~\ref{assumption:smooth} is a {\rm P}-stationary point of
$\cost + \phi$.
Moreover, we presented a proximal variable smoothing algorithm with a nonmonotone linesearch for minimization of a nonconvex nonsmooth function in~\eqref{eq:problem} under Assumptions~\ref{assumption:basic},~\ref{assumption:regular} and~\ref{assumption:phi}.
We established an asymptotic convergence analysis of the proposed algorithm in terms of an {\rm F}-stationary point with the aid of an asymptotic property of
$\mathcal{M}_{\gamma}^{\cost,\phi}$.

Another promising direction of this work is to apply the proposed analysis, e.g., in Theorems~\ref{theorem:asymptotic_approximation} and~\ref{theorem:F_stationarity}, of the gradient mapping-type stationarity measure to proximal subgradient-type methods.
Indeed, the update
$\bm{x}_{n+1} \in \prox{\gamma_{n}\phi}(\bm{x}_{n} - \gamma_{n}\bm{v}_{n})$
with a subgradient
$\bm{v}_{n} \in \Lsubdiff \cost(\bm{x}_{n})$
is naturally associated with
$\mathcal{M}_{\gamma_{n}}^{\cost,\phi}(\bm{x}_{n})$,
because the residual
$\norm{(\bm{x}_{n}-\bm{x}_{n+1})/\gamma_{n}}$
provides an upper bound on this measure.
Proximal subgradient-type methods have been studied in, e.g.,~\cite{Bello17} for the sum of convex functions,~\cite{Davis-Drusvyatskiy19,Chen-Garcia-Shahrampour22,Pougkakiotis-Kalogerias23} for the sum of weakly convex functions,~\cite{Davis-Drusvyatskiy-Shi25} for the sum of a weakly convex function and the indicator function with a proximally smooth set, which implies the prox-regularity of that set, and~\cite{Solodov25} for the sum of locally Lipschitz continuous and lower regular functions, and the indicator function with a convex set.
It would be interesting to investigate whether the asymptotic analysis developed in this paper can yield stationarity guarantees for proximal subgradient-type methods
beyond convexity and weak convexity, especially under prox-regularity and
lower regularity assumptions.

Finally, for readers interested in numerical experiments,
we refer to our recent papers~\cite{Kume-Yamada25B,Kume-Yamada25C}:
\cite{Kume-Yamada25B} reports a fast convergence speed of the proposed algorithm under the convexity of
$\phi$
in numerical experiments;
\cite{Kume-Yamada25C} reports an effective estimation performance for a robust low-rank matrix recovery problem formulated into the problem~\eqref{eq:problem} under Assumptions~\ref{assumption:basic},~\ref{assumption:regular} and~\ref{assumption:phi}.

\section*{Declarations}
\noindent
{\bf Data availability.}
No datasets were generated or analyzed during the current study.

\noindent
{\bf Funding Information.}
This work was supported partially by the Japan Society for the Promotion of Science (JSPS) KAKENHI (24K23885, 26K21332).

\noindent
{\bf Competing Interests.}
The authors declare that they have no competing interests relevant to this article.

\appendix
\def\thesection{Appendix \Alph{section}}

\newcounter{appnum}
\setcounter{appnum}{0}

\setcounter{theorem}{0}
\renewcommand{\thetheorem}{\Alph{appnum}.\arabic{theorem}}

\stepcounter{theorem}
\stepcounter{appnum}

\section{Basic facts on set-valued analysis} \label{sec:appendix:set_valued}
We summarize basic facts on asymptotic analysis for set-valued mappings, including the proximity operator, together with notions of boundedness.
We also use
the {\em graphical outer limit}~\cite[Prop. 5.33]{Rockafellar-Wets98} of a sequence
$(\mathcal{S}_{n})_{n=1}^{\infty}$
of set-valued mappings
$\mathcal{S}_{n}:\mathcal{X}\rightrightarrows\mathcal{X}$:
\begin{align}
  (\widebar{\bm{x}}\in\mathcal{X}) \quad
  \gLimsup_{n\to\infty} \mathcal{S}_{n}(\widebar{\bm{x}})\coloneqq \bigcup_{\mathcal{X}\ni\bm{x}_{n}\to\widebar{\bm{x}}} \Limsup_{n\to\infty} \mathcal{S}_{n}(\bm{x}_{n}). \label{eq:gLimsup}
\end{align}

\begin{definition}[Boundedness; see a recent paper~{\cite{Lopez-Sama21}} for thorough review]
  \label{definition:bounded}
  \
  \begin{enumerate}[label=(\alph*)]
    \item
          (Eventually bounded) \label{enum:definition:bounded:eventually_bounded}
          A set sequence
          $(\mathcal{E}_{n})_{n=1}^{\infty}$
          of
          $\mathcal{E}_{n} \subset \mathcal{X}$
          is said to be {\em eventually bounded} if
          $\bigcup_{n=n_{0}}^{\infty} \mathcal{E}_{n}$
          is bounded with some
          $n_{0}\in\mathbb{N}$~\cite[Exm. 4.22]{Rockafellar-Wets98}.
    \item
          (Locally bounded) \label{enum:definition:bounded:locally_bounded}
          A set-valued mapping
          $\mathcal{S}:\mathcal{X} \rightrightarrows \mathcal{X}$
          is said to be {\em locally bounded} at
          $\widebar{\bm{x}} \in \mathcal{X}$
          if
          $\mathcal{S}(\mathcal{N}_{\widebar{\bm{x}}}) \subset \mathcal{X}$
          is bounded with some open neighborhood
          $\mathcal{N}_{\widebar{\bm{x}}} \subset \mathcal{X}$
          of
          $\widebar{\bm{x}}$~\cite[Def. 5.14]{Rockafellar-Wets98}.
          $\mathcal{S}$
          is called locally bounded if
          $\mathcal{S}$
          is locally bounded at every
          $\widebar{\bm{x}} \in \mathcal{X}$,
          or equivalently if
          $\mathcal{S}(\mathcal{E}) \subset \mathcal{X}$
          is bounded for every bounded set
          $\mathcal{E}\subset\mathcal{X}$~\cite[Prop. 2.3 (a)]{Lopez-Sama21}.
  \end{enumerate}
\end{definition}

For
$\cost$
and
$\phi$
in Assumption~\ref{assumption:basic},
$\Lsubdiff\cost$
and
$\prox{\gamma\phi}$
are typical examples of locally bounded set-valued mappings as follows.

\begin{fact}[Properties of $\Lsubdiff \cost$]
  \label{fact:subdifferential_nonempty}
  For
  $\cost:\mathcal{X} \to \exR$
  satisfying Assumption~\ref{assumption:basic}~\ref{enum:problem:origin:cost}
  and for
  $\widebar{\bm{x}} \in \mathrm{int}(\dom{\cost})$,
  $\Lsubdiff \cost (\widebar{\bm{x}})$
  is nonempty and compact, and
  $\Lsubdiff \cost$
  is locally bounded
  and outer semicontinuous at
  $\widebar{\bm{x}}$.
  See~\cite[Prop. 8.7 and Thm. 9.13]{Rockafellar-Wets98} with the local Lipschitz continuity
  (or equivalently, strict continuity) of
  $\cost$
  at
  $\widebar{\bm{x}}$.
\end{fact}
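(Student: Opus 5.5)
The plan is to derive the four claims of Fact~\ref{fact:subdifferential_nonempty} from the local Lipschitz continuity of $\cost$ near $\widebar{\bm{x}}$. The first step is to fix $\widebar{\bm{x}}\in\mathrm{int}(\dom{\cost})$ and pick $r>0$ with $B(\widebar{\bm{x}},2r)\subset\mathrm{int}(\dom{\cost})$ such that $\cost$ is $L$-Lipschitz on $B(\widebar{\bm{x}},2r)$. This is possible by Assumption~\ref{assumption:basic}~\ref{enum:problem:origin:cost}: local Lipschitz continuity at $\widebar{\bm{x}}$ gives such a neighborhood directly. In particular $\cost$ is finite and continuous there.

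The second step is the uniform bound. For any $\bm{x}\in B(\widebar{\bm{x}},r)$ and $\bm{v}\in\Fsubdiff\cost(\bm{x})$, I will test the Fréchet inequality along the ray $\bm{x}+t\bm{v}/\norm{\bm{v}}$ with $t\searrow0$. The Lipschitz bound on the increment then gives $\norm{\bm{v}}\le L$. Now take $\bm{v}\in\Lsubdiff\cost(\bm{x})$ for $\bm{x}\in B(\widebar{\bm{x}},r)$. It is a limit of $\bm{v}_n\in\Fsubdiff\cost(\bm{x}_n)$ with $\bm{x}_n\to\bm{x}$, and eventually $\bm{x}_n\in B(\widebar{\bm{x}},r)$, so $\norm{\bm{v}}\le L$. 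Hence $\Lsubdiff\cost(B(\widebar{\bm{x}},r))\subset \{\norm{\bm{v}}\le L\}$, which is local boundedness.

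The third step is outer semicontinuity and closedness. Take $\bm{x}_n\to\widebar{\bm{x}}$ and $\bm{v}_n\in\Lsubdiff\cost(\bm{x}_n)$ with $\bm{v}_n\to\bm{v}$. By the definition of $\Lsubdiff$, I choose $\bm{y}_n$ and $\bm{w}_n\in\Fsubdiff\cost(\bm{y}_n)$ with $\norm{\bm{y}_n-\bm{x}_n}<1/n$, $\abs{\cost(\bm{y}_n)-\cost(\bm{x}_n)}<1/n$ and $\norm{\bm{w}_n-\bm{v}_n}<1/n$. This is a diagonal argument. By continuity of $\cost$ near $\widebar{\bm{x}}$ we get $\cost(\bm{y}_n)\to\cost(\widebar{\bm{x}})$, so $\bm{v}\in\Lsubdiff\cost(\widebar{\bm{x}})$. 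Taking a constant sequence $\bm{x}_n\equiv\widebar{\bm{x}}$ in this argument shows $\Lsubdiff\cost(\widebar{\bm{x}})$ is closed. Combined with the bound from the second step, it is compact.

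The main obstacle is nonemptiness, which is the only non-elementary part. I would first try to invoke the known density result, that $\Fsubdiff\cost$ is nonempty on a dense subset of $\dom{\cost}$ for lower semicontinuous $\cost$ (\cite[Cor.~8.10]{Rockafellar-Wets98}). This yields $\bm{x}_n\to\widebar{\bm{x}}$ with some $\bm{v}_n\in\Fsubdiff\cost(\bm{x}_n)$, and continuity gives $\cost(\bm{x}_n)\to\cost(\widebar{\bm{x}})$. The $\bm{v}_n$ are bounded by $L$, so a subsequence converges, and the limit lies in $\Lsubdiff\cost(\widebar{\bm{x}})$. If a self-contained route is preferred, the fallback is the variational argument: minimize $\cost(\bm{x})+\frac{1}{2\epsilon}\norm{\bm{x}-\widebar{\bm{x}}}^{2}$ over the closed ball $\overline{B}(\widebar{\bm{x}},r)$. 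For small $\epsilon$, the Lipschitz bound forces the minimizer into the interior, so it admits a proximal, hence Fréchet, subgradient; letting $\epsilon\to0$ then gives the needed sequence. Alternatively, one can simply cite \cite[Thm.~9.13]{Rockafellar-Wets98}, which packages exactly this equivalence between strict continuity and local boundedness/nonemptiness of $\Lsubdiff\cost$.
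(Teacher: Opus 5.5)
Your proposal is correct, and it is more self-contained than the paper's treatment. The paper gives no argument of its own. It invokes \cite[Prop.~8.7]{Rockafellar-Wets98}, which gives outer semicontinuity of $\Lsubdiff\cost$ along $\cost$-attentive convergence; this becomes ordinary outer semicontinuity because $\cost$ is continuous near $\widebar{\bm{x}}$. It also invokes the subdifferential criterion for strict continuity \cite[Thm.~9.13]{Rockafellar-Wets98}, which for a locally Lipschitz $\cost$ delivers local boundedness of $\Lsubdiff\cost$ and nonemptiness and compactness of $\Lsubdiff\cost(\widebar{\bm{x}})$.

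You instead rebuild these facts from the local Lipschitz constant.
\begin{enumerate}[label=(\roman*)]
  \item The ray test gives $\norm{\bm{v}}\le L$ for Fr\'echet subgradients on $B(\widebar{\bm{x}},r)$, and this bound passes to limiting subgradients.
  \item The diagonal argument gives outer semicontinuity, and with constant sequences it gives closedness.
  \item Nonemptiness comes either from density of $\dom{\Fsubdiff\cost}$ in $\dom{\cost}$, or from your quadratic-penalty argument. There, $\norm{\bm{x}_{\epsilon}-\widebar{\bm{x}}}\le 2\epsilon L$ keeps the minimizer interior, and $(\widebar{\bm{x}}-\bm{x}_{\epsilon})/\epsilon\in\Psubdiff\cost(\bm{x}_{\epsilon})$ is a bounded subgradient sequence with $\bm{x}_{\epsilon}\to\widebar{\bm{x}}$ and $\cost(\bm{x}_{\epsilon})\to\cost(\widebar{\bm{x}})$.
\end{enumerate}

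The citation is shorter and yields more: the converse implication, and $\mathrm{lip}\,\cost(\widebar{\bm{x}})=\max\{\norm{\bm{v}}\mid\bm{v}\in\Lsubdiff\cost(\widebar{\bm{x}})\}$. Your version is elementary and shows that only continuity and a local Lipschitz constant are used. It also gives the explicit bound $\Lsubdiff\cost(B(\widebar{\bm{x}},r))\subset\{\bm{v}\in\mathcal{X}\mid\norm{\bm{v}}\le L\}$.

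Your nonemptiness step does not hinge on the exact form of the density result you cite. Even if that result supplied only limiting subgradients at nearby points, your bounds and outer-semicontinuity argument would still finish the proof. The penalty argument avoids the density result entirely.
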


\begin{fact}[Properties of the proximity operator]
  \label{fact:prox_set_mapping}
  For
  $\phi:\mathcal{X} \to \exR$
  satisfying Assumption~\ref{assumption:basic}~\ref{enum:problem:origin:phi},
  the following hold:
  \begin{enumerate}[label=(\alph*)]
    \item
          \label{enum:fact:prox_set_mapping:nonempty_resolvent}
          For every
          $\gamma \in (0,\gamma_{\phi})$
          and
          $\widebar{\bm{x}} \in \mathcal{X}$,
          $\prox{\gamma \phi}(\widebar{\bm{x}})$
          is nonempty and compact, and
          $\prox{\gamma \phi}(\widebar{\bm{x}}) \subset (\Id+\gamma\Lsubdiff \phi)^{-1}(\widebar{\bm{x}})$
          holds~\cite[Thm. 1.25 and Exm. 10.2]{Rockafellar-Wets98},
          where the latter ensures
          $\frac{\widebar{\bm{x}} - \bm{p}}{\gamma} \in \Lsubdiff \phi(\bm{p})$
          for
          $\bm{p} \in \prox{\gamma\phi}(\widebar{\bm{x}})$.
    \item
          \label{enum:fact:prox_set_mapping:locally_bounded}
          For every
          $\gamma \in (0,\gamma_{\phi})$
          and
          $\widebar{\bm{x}} \in \mathcal{X}$,
          $\prox{\gamma\phi}$
          is locally bounded and outer semicontinuous at
          $\widebar{\bm{x}}$~\cite[Exm. 5.23 (b)]{Rockafellar-Wets98}.
    \item
          \label{enum:fact:prox_set_mapping:eventually_locall_bounded}
          Let
          $(\gamma_{n})_{n=1}^{\infty} \subset (0,\gamma_{\phi})$
          converge to some
          $\widebar{\gamma} \in [0,\gamma_{\phi})$.
          Then, we have
          \begin{equation}
            (\widebar{\gamma} (= \lim_{n\to\infty}\gamma_{n}) \in (0,\gamma_{\phi}),\  \widebar{\bm{x}}\in\mathcal{X})\quad
            \gLimsup_{n\to\infty}\prox{\gamma_{n}\phi}(\widebar{\bm{x}})
            \subset \prox{\widebar{\gamma}\phi}(\widebar{\bm{x}}) \label{eq:gLimsup_prox_positive_gamma}
          \end{equation}
          from~\cite[Exe. 7.38]{Rockafellar-Wets98}.
          Moreover, if
          $(\gamma_{n})_{n=1}^{\infty} \subset (0,\gamma_{\phi})$
          converges to
          $0$
          and
          $(\bm{x}_{n})_{n=1}^{\infty}\subset \mathcal{X}$
          converges to some
          $\widebar{\bm{x}} \in \dom{\phi}$,
          then every sequence
          $\bm{p}_{n} \in \prox{\gamma_{n}\phi}(\bm{x}_{n}) \ (n\in\mathbb{N})$
          converges to
          $\widebar{\bm{x}}$~\cite[Lemma 2.3]{Perez-Aroz-Torregrosa-Belen25}, from which we obtain
          $\Limsup_{n\to\infty}\prox{\gamma_{n}\phi}(\bm{x}_{n}) = \{\widebar{\bm{x}}\}$,
          and thus
          \begin{equation}
            (\widebar{\gamma} (=\lim_{n\to\infty}\gamma_{n}) = 0,\  \widebar{\bm{x}}\in\dom{\phi})\quad
            \gLimsup_{n\to\infty}\prox{\gamma_{n}\phi}(\widebar{\bm{x}})
            = \{\widebar{\bm{x}}\}. \label{eq:gLimsup_prox_zero_gamma}
          \end{equation}
    \item
          \label{enum:fact:prox_set_mapping:prox_single}
          Assume that
          $\phi$
          is prox-regular at
          $\widebar{\bm{x}} \in \dom{\phi}$
          for
          $\widebar{\bm{v}} \in \Lsubdiff \phi(\widebar{\bm{x}})$.
          Then,
          there exists
          $\gamma_{\widebar{\bm{x}}} \in (0,\gamma_{\phi})$
          such that, for each
          $\gamma \in (0,\gamma_{\widebar{\bm{x}}}]$,
          $\prox{\gamma \phi}$
          is single-valued and Lipschitz continuous over
          $B(\widebar{\bm{x}}+\gamma\widebar{\bm{v}}, \tau_{\gamma})$
          with some
          $\tau_{\gamma} > 0$~\cite[Thm. 4.4]{Poliquin-Rockafellar96}~\cite[Thm. 2.4]{Hare-Poliquin05}.
          In particular, we have
          $\prox{\gamma \phi}(\widebar{\bm{x}} + \gamma \widebar{\bm{v}}) = \{\widebar{\bm{x}}\}\ (\forall \gamma \in (0,\gamma_{\widebar{\bm{x}}}])$~\cite[Thm. 2.3]{Hare-Poliquin05}.
  \end{enumerate}
\end{fact}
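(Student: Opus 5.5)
The statement to be justified is Fact~\ref{fact:prox_set_mapping}. I plan to verify its four items in order. Items~(a)--(c) follow from elementary lower-semicontinuity and coercivity arguments driven by prox-boundedness. Item~(d) is the genuinely local, prox-regularity-based part, and I expect it to be the main obstacle.

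\emph{(a) and (b).} Fix $\gamma\in(0,\gamma_{\phi})$ and choose $\gamma'\in(\gamma,\gamma_{\phi})$. Prox-boundedness gives $a\in\mathbb{R}$ with $\phi(\bm{x})\geq -a-\frac{1}{2\gamma'}\norm{\bm{x}}^{2}$ for all $\bm{x}$. Hence
\[
\phi(\cdot)+\tfrac{1}{2\gamma}\norm{\cdot-\widebar{\bm{x}}}^{2}
\]
is lower semicontinuous and coercive, since $\frac{1}{2\gamma}-\frac{1}{2\gamma'}>0$. Its argmin is therefore nonempty and compact, which is the first claim of (a). For any $\bm{p}\in\prox{\gamma\phi}(\widebar{\bm{x}})$, Fermat's rule together with the smooth sum rule gives $\bm{0}\in\Lsubdiff\phi(\bm{p})+(\bm{p}-\widebar{\bm{x}})/\gamma$, which is the inclusion in (a).

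For local boundedness in (b), I will observe that the coercivity estimate is uniform over $\widebar{\bm{x}}$ in a bounded set. Comparing with the fixed test point $\bm{z}\in\dom{\phi}$ bounds $\norm{\bm{p}}$ uniformly. For outer semicontinuity, take $\bm{p}_{n}\in\prox{\gamma\phi}(\bm{x}_{n})$ with $\bm{x}_{n}\to\widebar{\bm{x}}$ and $\bm{p}_{n}\to\bm{p}$. For every $\bm{z}$ we have
\[
\phi(\bm{p}_{n})+\tfrac{1}{2\gamma}\norm{\bm{p}_{n}-\bm{x}_{n}}^{2}\leq\phi(\bm{z})+\tfrac{1}{2\gamma}\norm{\bm{z}-\bm{x}_{n}}^{2}.
\]
Taking $\liminf$ and using lower semicontinuity of $\phi$ gives $\bm{p}\in\prox{\gamma\phi}(\widebar{\bm{x}})$.

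\emph{(c), case $\widebar{\gamma}>0$.} I will repeat the same comparison argument with $\gamma_{n}$ in place of $\gamma$. Boundedness of $(\bm{p}_{n})$ is not even needed for the graphical outer limit, because a convergent subsequence is given. Passing to the limit yields~\eqref{eq:gLimsup_prox_positive_gamma}.

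\emph{(c), case $\gamma_{n}\to0$ and $\bm{x}_{n}\to\widebar{\bm{x}}\in\dom{\phi}$.} Here I compare with $\bm{z}=\widebar{\bm{x}}$ and insert the lower bound on $\phi$. Multiplying by $2\gamma_{n}$ gives
\[
\norm{\bm{p}_{n}-\bm{x}_{n}}^{2}-\tfrac{\gamma_{n}}{\gamma'}\norm{\bm{p}_{n}}^{2}\leq 2\gamma_{n}\bigl(\phi(\widebar{\bm{x}})+a\bigr)+\norm{\widebar{\bm{x}}-\bm{x}_{n}}^{2}.
\]
Because $\gamma_{n}/\gamma'\to0$, this inequality first forces $(\bm{p}_{n})$ to be bounded. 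It then forces $\norm{\bm{p}_{n}-\bm{x}_{n}}\to0$, so $\bm{p}_{n}\to\widebar{\bm{x}}$, which gives~\eqref{eq:gLimsup_prox_zero_gamma}.

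\emph{(d).} This is where I expect the main obstacle. The prox-regularity inequality~\eqref{eq:prox_regular} is only local and only $\phi$-attentive, while $\prox{\gamma\phi}$ is a global argmin. My plan has three steps.

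First, I will show that for small $\gamma$ and inputs $\bm{u}$ near $\widebar{\bm{x}}+\gamma\widebar{\bm{v}}$, every $\bm{p}\in\prox{\gamma\phi}(\bm{u})$ lies in $B(\widebar{\bm{x}},\delta)$. I will also show that $\phi(\bm{p})<\phi(\widebar{\bm{x}})+\delta$ and that $(\bm{u}-\bm{p})/\gamma\in B(\widebar{\bm{v}},\delta)$. To do this I intend to redo the estimate from (c), using the test point $\bm{z}=\widebar{\bm{x}}$, a prox-boundedness bound to control far-away points, and a contradiction argument over sequences.

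Second, with these localization properties and $\gamma<\eta^{-1}$, I will apply~\eqref{eq:prox_regular} to two such outputs $\bm{p}_{1},\bm{p}_{2}$ with inputs $\bm{u}_{1},\bm{u}_{2}$. Adding the two inequalities should give
\[
(1-\gamma\eta)\norm{\bm{p}_{1}-\bm{p}_{2}}\leq\norm{\bm{u}_{1}-\bm{u}_{2}}.
\]
This yields single-valuedness and Lipschitz continuity with constant $(1-\gamma\eta)^{-1}$ on a ball $B(\widebar{\bm{x}}+\gamma\widebar{\bm{v}},\tau_{\gamma})$.

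Third, for the fixed-point identity, I will use~\eqref{eq:prox_regular} at $(\widebar{\bm{x}},\widebar{\bm{v}})$. It shows that $\widebar{\bm{x}}$ minimizes $\phi+\frac{1}{2\gamma}\norm{\cdot-\widebar{\bm{x}}-\gamma\widebar{\bm{v}}}^{2}$ locally. The localization from the first step then makes this minimization global, and uniqueness follows from the second step.

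If the localization step resists a direct proof, my fallback is to invoke the Poliquin--Rockafellar characterization: the $\phi$-attentive localization of $\Lsubdiff\phi+\eta\Id$ is monotone. Combined with the argument of Hare--Poliquin, this identifies the global prox with the local resolvent for all sufficiently small $\gamma$.
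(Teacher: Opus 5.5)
Your proposal is correct, but it takes a genuinely different route from the paper. The paper gives no argument of its own for this Fact and simply imports each item from the literature: Rockafellar--Wets (Thm.~1.25, Exm.~10.2, Exm.~5.23(b), Exe.~7.38), P\'erez-Aroz--Torregrosa-Bel\'en (Lemma~2.3), Poliquin--Rockafellar (Thm.~4.4) and Hare--Poliquin (Thms.~2.3--2.4). You instead derive (a)--(c) from a single quadratic minorant $\phi\geq -a-\frac{1}{2\gamma'}\norm{\cdot}^{2}$, combined with comparison against a test point. You derive (d) from the hypomonotonicity obtained by adding two prox-regularity inequalities. These steps are sound, including the $\gamma_{n}\to 0$ estimate that forces $\norm{\bm{p}_{n}-\bm{x}_{n}}\to 0$ and the bound $(1-\gamma\eta)\norm{\bm{p}_{1}-\bm{p}_{2}}\leq\norm{\bm{u}_{1}-\bm{u}_{2}}$. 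Your version buys self-containedness, an explicit Lipschitz constant $(1-\gamma\eta)^{-1}$, and a direct treatment of $\widebar{\bm{v}}\neq\bm{0}$. The citation route buys brevity and the extra structure proved by Poliquin--Rockafellar, namely the identification of $\prox{\gamma\phi}$ with the resolvent of a $\phi$-attentive localization of $\Lsubdiff\phi$.

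One point in (d) must be ordered differently from how you wrote it. Write $\bm{e}\coloneqq\bm{u}-\widebar{\bm{x}}-\gamma\widebar{\bm{v}}$. The (c)-type estimate with $\bm{z}=\widebar{\bm{x}}$ only makes $\norm{\bm{p}-\widebar{\bm{x}}}$ of order $\sqrt{\gamma}+\norm{\bm{e}}$. That is enough for $\bm{p}\in B(\widebar{\bm{x}},\delta)$. It is not enough for $(\bm{u}-\bm{p})/\gamma\in B(\widebar{\bm{v}},\delta)$, which requires $\norm{\bm{e}-(\bm{p}-\widebar{\bm{x}})}<\gamma\delta$. A contradiction argument at fixed $\gamma$ using (b) would deliver this, but only once $\prox{\gamma\phi}(\widebar{\bm{x}}+\gamma\widebar{\bm{v}})=\{\widebar{\bm{x}}\}$ is known, and you placed that step last.

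The clean fix is to use the base-pair inequality $\phi(\bm{p})\geq\phi(\widebar{\bm{x}})+\inprod{\widebar{\bm{v}}}{\bm{p}-\widebar{\bm{x}}}-\frac{\eta}{2}\norm{\bm{p}-\widebar{\bm{x}}}^{2}$, valid once $\bm{p}\in B(\widebar{\bm{x}},\delta)$. Insert it into the prox comparison with $\bm{z}=\widebar{\bm{x}}$; this yields $\norm{\bm{p}-\widebar{\bm{x}}}\leq 2\norm{\bm{e}}/(1-\gamma\eta)$.
\begin{itemize}
\item For $\bm{e}=\bm{0}$ this is exactly the fixed-point identity, uniqueness included, without needing your second step.
\item For $\tau_{\gamma}$ a small multiple of $\gamma$, it gives both $(\bm{u}-\bm{p})/\gamma\in B(\widebar{\bm{v}},\delta)$ and $\phi(\bm{p})<\phi(\widebar{\bm{x}})+\delta$.
\end{itemize}
After that, your monotonicity step goes through as stated.
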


\begin{fact}[{\cite[Exe. 4.8]{Rockafellar-Wets98}}]
  \label{fact:outer}
  For any
  $\widebar{\bm{x}} \in \mathcal{X}$,
  and any sequence
  $(\mathcal{E}_{n})_{n=1}^{\infty}$
  of subsets
  $\mathcal{E}_{n} \subset \mathcal{X}$,
  $\liminf_{n\to\infty} \dist(\widebar{\bm{x}},\mathcal{E}_{n}) = \dist\left(\widebar{\bm{x}},\Limsup_{n\to\infty}\mathcal{E}_{n}\right)$
  holds.
\end{fact}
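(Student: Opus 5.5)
The plan is to prove the two inequalities $\liminf_{n\to\infty}\dist(\widebar{\bm{x}},\mathcal{E}_{n}) \leq \dist(\widebar{\bm{x}},\Limsup_{n\to\infty}\mathcal{E}_{n})$ and $\geq$ separately, working directly from the definition of the outer limit. We use the convention $\dist(\widebar{\bm{x}},\emptyset)=+\infty$, and throughout we exploit that $\mathcal{X}$ is finite-dimensional, so bounded sequences have convergent subsequences.

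For ``$\leq$'': if $\Limsup_{n\to\infty}\mathcal{E}_{n}=\emptyset$, the right-hand side is $+\infty$ and there is nothing to prove. Otherwise, fix any $\bm{v}\in\Limsup_{n\to\infty}\mathcal{E}_{n}$. By definition there exist $\mathcal{N}\in\Ninfty$ and $\bm{v}_{n}\in\mathcal{E}_{n}$ $(n\in\mathcal{N})$ with $\bm{v}_{n}\to\bm{v}$ along $\mathcal{N}$. Then $\dist(\widebar{\bm{x}},\mathcal{E}_{n})\leq\norm{\widebar{\bm{x}}-\bm{v}_{n}}$ for $n\in\mathcal{N}$. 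Taking the liminf over $\mathcal{N}$ and noting that the liminf over all of $\mathbb{N}$ is no larger gives $\liminf_{n\to\infty}\dist(\widebar{\bm{x}},\mathcal{E}_{n})\leq\norm{\widebar{\bm{x}}-\bm{v}}$. Taking the infimum over $\bm{v}$ yields the claim.

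For ``$\geq$'': let $d\coloneqq\liminf_{n\to\infty}\dist(\widebar{\bm{x}},\mathcal{E}_{n})$. If $d=+\infty$ the inequality is trivial, so assume $d<+\infty$.
\begin{enumerate}[label=(\roman*)]
  \item Choose $\mathcal{N}\in\Ninfty$ with $\lim_{\mathcal{N}\ni n\to\infty}\dist(\widebar{\bm{x}},\mathcal{E}_{n})=d$. Discarding finitely many indices, all these distances are finite, so $\mathcal{E}_{n}\neq\emptyset$ for $n\in\mathcal{N}$.
  \item Pick near-minimizers $\bm{v}_{n}\in\mathcal{E}_{n}$ satisfying $\norm{\widebar{\bm{x}}-\bm{v}_{n}}\leq\dist(\widebar{\bm{x}},\mathcal{E}_{n})+1/n$. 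Such points exist because we only need an approximate infimum, so no closedness of $\mathcal{E}_{n}$ is required.
  \item The sequence $(\bm{v}_{n})_{n\in\mathcal{N}}$ is bounded, since $\norm{\bm{v}_{n}}\leq\norm{\widebar{\bm{x}}}+d+2$ eventually. Hence a further infinite subset $\mathcal{N}'\subset\mathcal{N}$ yields a limit $\bm{v}=\lim_{\mathcal{N}'\ni n\to\infty}\bm{v}_{n}$, and $\bm{v}\in\Limsup_{n\to\infty}\mathcal{E}_{n}$ by definition.
  \item Passing to the limit, $\norm{\widebar{\bm{x}}-\bm{v}}\leq d$, so $\dist(\widebar{\bm{x}},\Limsup_{n\to\infty}\mathcal{E}_{n})\leq d$.
\end{enumerate}

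The only delicate point is step (iii), the extraction of a convergent subsequence. It relies on boundedness, which follows from the finiteness of $d$, together with finite-dimensionality of $\mathcal{X}$. The remaining care is bookkeeping for empty sets and infinite values, which the case split $d=+\infty$ versus $d<+\infty$ handles.
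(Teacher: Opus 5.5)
Your proof is correct. The ``$\leq$'' direction follows directly from the definition of the outer limit. Your ``$\geq$'' argument is also sound: you take a subsequence realizing the $\liminf$, pick approximate nearest points $\bm{v}_{n}\in\mathcal{E}_{n}$ (so no closedness of $\mathcal{E}_{n}$ is needed), and apply Bolzano--Weierstrass. The case split on $d<+\infty$ versus $d=+\infty$ correctly handles the empty-set and $+\infty$ conventions.

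The paper gives no argument of its own here; it only cites \cite[Exe. 4.8]{Rockafellar-Wets98}. So your write-up is a self-contained verification of that textbook exercise rather than a competing route. What it adds over the bare citation is that it shows where the setting matters. The inequality $\liminf_{n\to\infty}\dist(\widebar{\bm{x}},\mathcal{E}_{n})\leq\dist(\widebar{\bm{x}},\Limsup_{n\to\infty}\mathcal{E}_{n})$ holds in any metric space. The reverse inequality relies on the local compactness of the Euclidean space $\mathcal{X}$. For example, in $\ell^{2}$ take $\mathcal{E}_{n}\coloneqq\{\bm{e}_{n}\}$ (the unit vectors) and $\widebar{\bm{x}}\coloneqq\bm{0}$: the left-hand side is $1$, but the outer limit is empty, so the right-hand side is $+\infty$.
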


The following two lemmas are elementary consequences of outer limits and graphical outer limits, where they are used only to make the proof of Theorem~\ref{theorem:asymptotic_approximation}, in particular of Claim~\ref{claim:set_valued_elementary}.
Similar results to Lemma~\ref{lemma:sum_rule_outer_limit} and~\ref{lemma:convergence_mapping_set} are found respectively in~\cite[Exe.
  4.29 (c)]{Rockafellar-Wets98} and~\cite[Prop. 4.15 (b)]{Lopez-Sama21}.
For self-containedness, we present their proofs because these lemmas are used in our analysis repeatedly.

\begin{lemma}[Sum rule for outer limit under the eventual boundedness]
  \label{lemma:sum_rule_outer_limit}
  For set sequences
  $\mathcal{D}_{n} \subset \mathcal{X}$
  and
  $\mathcal{E}_{n} \subset \mathcal{X}\ (n\in \mathbb{N})$,
  we have
  $\Limsup_{n\to\infty}(\mathcal{D}_{n} + \mathcal{E}_{n}) \subset \Limsup_{n\to\infty}\mathcal{D}_{n} + \Limsup_{n\to\infty} \mathcal{E}_{n}$
  if
  $(\mathcal{D}_{n})_{n=1}^{\infty}$
  is eventually bounded.
\end{lemma}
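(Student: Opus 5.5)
The plan is to unfold the definition of the outer limit and then extract convergent subsequences from the bounded sequence. Take an arbitrary $\bm{w} \in \Limsup_{n\to\infty}(\mathcal{D}_{n} + \mathcal{E}_{n})$. By definition of the outer limit, there exist $\mathcal{N} \in \Ninfty$ and points $\bm{w}_{n} \in \mathcal{D}_{n} + \mathcal{E}_{n}$ $(n\in\mathcal{N})$ with $\bm{w} = \lim_{\mathcal{N}\ni n\to\infty}\bm{w}_{n}$. By the definition of the Minkowski sum, we can write $\bm{w}_{n} = \bm{d}_{n} + \bm{e}_{n}$ with $\bm{d}_{n} \in \mathcal{D}_{n}$ and $\bm{e}_{n} \in \mathcal{E}_{n}$ for each $n\in\mathcal{N}$.

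Next, invoke the eventual boundedness of $(\mathcal{D}_{n})_{n=1}^{\infty}$. There exist $n_{0}\in\mathbb{N}$ and $r>0$ with $\bigcup_{n\geq n_{0}}\mathcal{D}_{n}\subset B(\bm{0},r)$. Hence $(\bm{d}_{n})_{n\in\mathcal{N},\,n\geq n_{0}}$ is a bounded sequence in the Euclidean space $\mathcal{X}$. By Bolzano--Weierstrass there is $\mathcal{N}'\in\Ninfty$ with $\mathcal{N}'\subset\mathcal{N}$ such that $\bm{d}_{n}\to\widebar{\bm{d}}$ as $\mathcal{N}'\ni n\to\infty$. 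This gives $\widebar{\bm{d}}\in\Limsup_{n\to\infty}\mathcal{D}_{n}$ directly from the definition, using the index set $\mathcal{N}'$.

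Along $\mathcal{N}'$ we then have $\bm{e}_{n} = \bm{w}_{n}-\bm{d}_{n}\to \bm{w}-\widebar{\bm{d}}$. Since $\bm{e}_{n}\in\mathcal{E}_{n}$, it follows that $\widebar{\bm{e}}\coloneqq\bm{w}-\widebar{\bm{d}}\in\Limsup_{n\to\infty}\mathcal{E}_{n}$. Therefore
\[
\bm{w}=\widebar{\bm{d}}+\widebar{\bm{e}}\in\Limsup_{n\to\infty}\mathcal{D}_{n}+\Limsup_{n\to\infty}\mathcal{E}_{n},
\]
which proves the inclusion.

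The only substantive step is extracting a convergent subsequence of $(\bm{d}_{n})$. This is exactly where eventual boundedness is needed: without it, $\bm{d}_{n}$ and $\bm{e}_{n}$ could diverge in opposite directions while their sum converges. The rest is routine bookkeeping with index sets. If $\Limsup_{n\to\infty}(\mathcal{D}_{n}+\mathcal{E}_{n})$ is empty, the inclusion is trivial. Note also that passing to subsets of $\mathcal{N}$ is harmless, since the definition of $\Limsup$ only requires some infinite index set.
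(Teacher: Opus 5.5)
Your proof is correct and takes essentially the same approach as the paper: write $\bm{w}_{n}=\bm{d}_{n}+\bm{e}_{n}$, use eventual boundedness of $(\mathcal{D}_{n})_{n=1}^{\infty}$ to extract a convergent subsequence of $(\bm{d}_{n})$, and then read off the limit of $\bm{e}_{n}=\bm{w}_{n}-\bm{d}_{n}$ along the same index set. Your explicit restriction to indices $n\geq n_{0}$ is a small bookkeeping step that the paper leaves implicit.
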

\begin{proof}
  Let
  $\bm{v} \in \Limsup_{n\to\infty} (\mathcal{D}_{n}+\mathcal{E}_{n})$,
  i.e., there exist
  $\mathcal{N} \in \Ninfty$,
  $\bm{d}_{n} \in \mathcal{D}_{n}$,
  and
  $\bm{e}_{n} \in \mathcal{E}_{n}\ (n\in\mathcal{N})$
  satisfying
  $\lim_{\mathcal{N}\ni n\to\infty} (\bm{d}_{n}+\bm{e}_{n}) = \bm{v}$.
  Since
  $(\mathcal{D}_{n})_{n=1}^{\infty}$
  is eventually bounded,
  $(\bm{d}_{n})_{n\in\mathcal{N}}$
  is bounded.
  Without loss of generality, we can assume that
  $(\bm{d}_{n})_{n\in\mathcal{N}}$
  converges to some
  $\bm{d} \in \Limsup_{n\to\infty} \mathcal{D}_{n}$
  (by passing through further subsequence of
  $(\bm{d}_{n})_{n\in\mathcal{N}}$
    if necessary).
    By using this subsequence,
    we get
  $\bm{v}-\bm{d} = \lim_{\mathcal{N}\ni n\to\infty} (\bm{d}_{n}+\bm{e}_{n}) - \lim_{\mathcal{N} \ni n\to\infty} \bm{d}_{n} = \lim_{\mathcal{N}\ni n\to\infty}\bm{e}_{n} \in \Limsup_{n\to\infty} \mathcal{E}_{n}$.
    Hence,
  $\bm{v} = \bm{d} + (\bm{v}-\bm{d}) \in \Limsup_{n\to\infty} \mathcal{D}_{n} + \Limsup_{n\to\infty} \mathcal{E}_{n}$
  holds.
\end{proof}

\begin{lemma}[Outer limit of image sequence of set-valued mappings]
  \label{lemma:convergence_mapping_set}
  Let
  $\mathcal{S}:\mathcal{X} \rightrightarrows \mathcal{X}$
  and
  $\mathcal{S}_{n}:\mathcal{X} \rightrightarrows \mathcal{X}\ (n\in \mathbb{N})$.
  For subsets
  $\mathcal{E} \subset \mathcal{X}$
  and
  $\mathcal{E}_{n} \subset \mathcal{X}\ (n\in \mathbb{N})$,
  assume
  (i)
  $\Limsup_{n\to\infty} \mathcal{E}_{n} \subset \mathcal{E}$,
  (ii)
  $(\mathcal{E}_{n})_{n=1}^{\infty}$
  is eventually bounded,
  and
  (iii)
  $\gLimsup_{n\to\infty} \mathcal{S}_{n}(\widebar{\bm{x}}) \subset \mathcal{S}(\widebar{\bm{x}})\ (\widebar{\bm{x}} \in \mathcal{E})$,
  where the graphical outer limit
  $\gLimsup\mathcal{S}_{n}$
  of
  $(\mathcal{S}_{n})_{n=1}^{\infty}$
  is defined in~\eqref{eq:gLimsup}.
  Then, we have
  $\Limsup_{n\to\infty} \mathcal{S}_{n}(\mathcal{E}_{n}) \subset \mathcal{S}(\mathcal{E})$.
\end{lemma}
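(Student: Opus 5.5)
The plan is to argue directly from the definitions of outer limit, graphical outer limit and eventual boundedness, using a subsequence-extraction argument. Fix $\bm{w} \in \Limsup_{n\to\infty} \mathcal{S}_{n}(\mathcal{E}_{n})$. By the definition of the outer limit, there exist $\mathcal{N} \in \Ninfty$ and $\bm{w}_{n} \in \mathcal{S}_{n}(\mathcal{E}_{n})$ $(n\in\mathcal{N})$ with $\bm{w} = \lim_{\mathcal{N}\ni n\to\infty}\bm{w}_{n}$. Since $\mathcal{S}_{n}(\mathcal{E}_{n}) = \bigcup_{\bm{e}\in\mathcal{E}_{n}}\mathcal{S}_{n}(\bm{e})$, we can pick $\bm{e}_{n} \in \mathcal{E}_{n}$ with $\bm{w}_{n} \in \mathcal{S}_{n}(\bm{e}_{n})$ for each $n\in\mathcal{N}$.

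Next we use assumption (ii). Since $(\mathcal{E}_{n})_{n=1}^{\infty}$ is eventually bounded, the sequence $(\bm{e}_{n})_{n\in\mathcal{N}}$ is bounded. Passing to a further infinite index set $\mathcal{N}' \subset \mathcal{N}$, we may assume $\bm{e}_{n} \to \widebar{\bm{e}}$ along $\mathcal{N}'$. Then $\widebar{\bm{e}} \in \Limsup_{n\to\infty}\mathcal{E}_{n}$, and assumption (i) gives $\widebar{\bm{e}} \in \mathcal{E}$. Along $\mathcal{N}'$ we still have $\bm{w}_{n} \to \bm{w}$.

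The one step needing care is converting this subsequence information into membership in the graphical outer limit. The definition \eqref{eq:gLimsup} requires a full sequence $(\bm{x}_{n})_{n=1}^{\infty}$ converging to $\widebar{\bm{e}}$. We therefore define $\bm{x}_{n} \coloneqq \bm{e}_{n}$ for $n \in \mathcal{N}'$ and $\bm{x}_{n} \coloneqq \widebar{\bm{e}}$ for $n \notin \mathcal{N}'$. This sequence converges to $\widebar{\bm{e}}$. Moreover $\bm{w}_{n} \in \mathcal{S}_{n}(\bm{x}_{n})$ for all $n\in\mathcal{N}'$ and $\bm{w}_{n}\to\bm{w}$ along $\mathcal{N}'$. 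Hence $\bm{w} \in \Limsup_{n\to\infty}\mathcal{S}_{n}(\bm{x}_{n}) \subset \gLimsup_{n\to\infty}\mathcal{S}_{n}(\widebar{\bm{e}})$.

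Finally, assumption (iii) applied at $\widebar{\bm{e}} \in \mathcal{E}$ yields $\bm{w} \in \mathcal{S}(\widebar{\bm{e}}) \subset \mathcal{S}(\mathcal{E})$. Since $\bm{w}$ was arbitrary, this proves $\Limsup_{n\to\infty}\mathcal{S}_{n}(\mathcal{E}_{n}) \subset \mathcal{S}(\mathcal{E})$. The case in which $\Limsup_{n\to\infty}\mathcal{S}_{n}(\mathcal{E}_{n})$ is empty is trivial. No further obstacle is expected: boundedness supplies compactness, and the padding construction in the previous paragraph handles the full-sequence requirement.
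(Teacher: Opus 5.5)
Your proof is correct and follows essentially the same route as the paper. You pick preimages $\bm{e}_{n}\in\mathcal{E}_{n}$, extract a convergent subsequence via eventual boundedness, place the limit in $\mathcal{E}$ by (i), and conclude with (iii); your padding construction of a full sequence $(\bm{x}_{n})_{n=1}^{\infty}$ just spells out the step the paper writes as the inclusion of the graphical outer limit taken along $\mathcal{N}$ in the one taken along all of $\mathbb{N}$.
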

\begin{proof}
  Let
  $\bm{v} \in \Limsup_{n\to\infty} \mathcal{S}_{n}(\mathcal{E}_{n})$,
  i.e., there exist
  $\mathcal{N} \in \Ninfty$
  and
  $\bm{v}_{n} \in \mathcal{S}_{n}(\mathcal{E}_{n})$
  $(n\in\mathcal{N})$
  such that
  $\lim_{\mathcal{N}\ni n\to \infty} \bm{v}_{n} = \bm{v}$.
  For
  $n\in\mathcal{N}$,
  we have
  $\bm{v}_{n} \in \mathcal{S}_{n}(\bm{u}_{n})$
  with some
  $\bm{u}_{n} \in \mathcal{E}_{n}$,
  i.e.,
  $\bm{v} \in \Limsup_{\mathcal{N}\ni n\to\infty} \mathcal{S}_{n}(\bm{u}_{n})$.
  Since
  $(\mathcal{E}_{n})_{n=1}^{\infty}$
  is eventually bounded by the condition (ii),
  the sequence
  $(\bm{u}_{n})_{n\in\mathcal{N}}$
  is bounded.
  Hence, we can assume that
  $(\bm{u}_{n})_{n\in\mathcal{N}}$
  converges to some
  $\widebar{\bm{u}} \in \Limsup_{n\to\infty} \mathcal{E}_{n} \subset \mathcal{E}$
  (by passing through further subsequence of
  $(\bm{u}_{n})_{n\in\mathcal{N}}$
    if necessary),
    where the last inclusion holds by the condition (i).
    Then,
    the condition (iii)
    ensures
  $\Limsup_{\mathcal{N}\ni n\to\infty}\mathcal{S}_{n}(\bm{u}_{n}) \subset \gLimsup_{\mathcal{N}\ni n\to\infty}\mathcal{S}_{n}(\widebar{\bm{u}}) \subset \gLimsup_{n\to\infty}\mathcal{S}_{n}(\widebar{\bm{u}})\subset \mathcal{S}(\widebar{\bm{u}}) \subset \mathcal{S}(\mathcal{E})$.
    From
  $\bm{v} \in \Limsup_{\mathcal{N}\ni n\to\infty} \mathcal{S}_{n}(\bm{u}_{n})$,
    we have
  $\bm{v} \in \mathcal{S}(\mathcal{E})$.
\end{proof}

\stepcounter{theorem}
\stepcounter{appnum}

  {
    \bibliography{main}}

\end{document}

%% file: preamble.tex
\usepackage{graphicx}%
\usepackage{multirow}%

\usepackage{amsmath,amssymb,amsfonts}%
\usepackage{amssymb,amsfonts}%
\usepackage{amsthm}%
\usepackage{mathrsfs}%
\usepackage{xcolor}%
\usepackage{textcomp}%
\usepackage{manyfoot}%
\usepackage{booktabs}%
\usepackage{algorithm}%
\usepackage{algorithmicx}%
\usepackage{algpseudocode}%
\renewcommand{\figurename}{Fig.}

\usepackage{threeparttable}

\usepackage{epstopdf}%

\usepackage{here}
\usepackage{bm}
\usepackage{algorithm}
\usepackage{algpseudocode}
\usepackage{comment}
\usepackage{cases}
\usepackage{mathtools}
\usepackage{enumitem}
\usepackage{mathabx}
\usepackage{stmaryrd}
\usepackage{tikz}
\usetikzlibrary{arrows.meta}
\usetikzlibrary{positioning}
\usetikzlibrary{decorations.pathreplacing,calc}
\usepackage{csvsimple}
\usepackage{lscape}

\usepackage{etoolbox}

\usepackage{tabularx}
\usepackage{booktabs}
\newcommand{\argmin}{\mathop{\mathrm{argmin}}}

\newcommand{\inprod}[2]{{\left\langle #1,#2 \right\rangle}}

\newcommand{\TT}{\mathsf{T}}

\newcommand{\prox}[1]{\mathrm{prox}_{#1}}

\newcommand{\moreau}[2]{{}^{#2}#1}

\newcommand{\lev}[2]{\mathrm{lev}_{\leq #2}#1}

\newcommand{\exR}{\mathbb{R}\cup \{+\infty\}}
\newcommand{\exRp}{(0,+\infty]}
\newcommand{\dom}[1]{\mathrm{dom}(#1)}

\newcommand{\Limsup}{\mathop{\mathrm{Lim~sup}}}
\newcommand{\gLimsup}{\mathop{\mathrm{g\mathchar`-Lim~sup}}}

\newcommand{\norm}[1]{\left\lVert #1 \right\rVert}
\newcommand{\Id}{\mathrm{Id}}

\newcommand{\Psubdiff}{\partial_{\rm P}}
\newcommand{\Fsubdiff}{\partial_{\rm F}}
\newcommand{\Lsubdiff}{\partial_{\rm L}}

\newcommand{\Conv}{\mathop{\mathrm{Conv}}}
\newcommand{\dist}{\mathop{\mathrm{dist}}}

\makeatletter
\newcommand{\doublewidetilde}[1]{{%
  \mathpalette\double@widetilde{#1}%
}}
\newcommand{\double@widetilde}[2]{%
  \sbox\z@{$\m@th#1\widetilde{#2}$}%
  \ht\z@=.9\ht\z@
  \widetilde{\box\z@}%
}
\makeatother

\setlist[trivlist]{topsep=0.3em plus 2pt minus 2pt} %

\theoremstyle{plain}%
\newtheorem{theorem}{Theorem}[section]
\newtheorem{lemma}[theorem]{Lemma}
\newtheorem{corollary}[theorem]{Corollary}
\newtheorem{proposition}[theorem]{Proposition}
\newtheorem{claim}[theorem]{Claim}

\theoremstyle{definition}
\newtheorem{definition}[theorem]{Definition}
\newtheorem{fact}[theorem]{Fact}
\newtheorem{example}[theorem]{Example}
\newtheorem{assumption}[theorem]{Assumption}

\newtheorem{remark}[theorem]{Remark}

\theoremstyle{remark}

\DeclarePairedDelimiter{\abs}{\lvert}{\rvert}